\ProvideTextCommandDefault{\guillemotleft}{%
  {\usefont{U}{lasy}{m}{n}\char'50\kern-.15em\char'50}%
\penalty10000\hskip0pt\relax%
}
\ProvideTextCommandDefault{\guillemotright}{%
  \penalty10000\hskip0pt%
  {\usefont{U}{lasy}{m}{n}\char'51\kern-.15em\char'51}%
}
\numberwithin{equation}{section}
\numberwithin{figure}{section}
\theoremstyle{plain}
\newtheorem{thm}{\protect\theoremname}[section]
\theoremstyle{remark}
\newtheorem{rem}[thm]{\protect\remarkname}
\theoremstyle{definition}
\newtheorem{defn}[thm]{\protect\definitionname}
\theoremstyle{plain}
\newtheorem{cor}[thm]{\protect\corollaryname}
\theoremstyle{definition}
\newtheorem{example}[thm]{\protect\examplename}
\theoremstyle{plain}
\newtheorem{question}[thm]{\protect\questionname}
\theoremstyle{remark}
\newtheorem*{acknowledgement*}{\protect\acknowledgementname}
\theoremstyle{plain}
\newtheorem{lem}[thm]{\protect\lemmaname}
\theoremstyle{plain}
\newtheorem{prop}[thm]{\protect\propositionname}
\theoremstyle{remark}
\newtheorem{claim}[thm]{\protect\claimname}
\theoremstyle{plain}
\providecommand{\acknowledgementname}{Acknowledgement}
\providecommand{\claimname}{Claim}
\providecommand{\corollaryname}{Corollary}
\providecommand{\definitionname}{Definition}
\providecommand{\examplename}{Example}
\providecommand{\lemmaname}{Lemma}
\providecommand{\propositionname}{Proposition}
\providecommand{\questionname}{Question}
\providecommand{\remarkname}{Remark}
\providecommand{\theoremname}{Theorem}
\begin{document}
\title{Integrability of pushforward measures by analytic maps}
\author{Itay Glazer}
\address{Department of Mathematics, University of Oxford, Andrew Wiles Building, Radcliffe Observatory
Quarter (550), Woodstock Road, Oxford, OX2 6GG, UK and Department of Mathematics, Technion – Israel Institute of Technology, Haifa 3200003, Israel}
\email{itayglazer@gmail.com}
\urladdr{https://sites.google.com/view/itay-glazer}
\author{Yotam I. Hendel}
\address{KU Leuven, Department of Mathematics, B-3001 Leuven, Belgium and Ben Gurion University of the Negev,  Department of Mathematics, Be’er Sheva 8410501, Israel}
\email{yotam.hendel@gmail.com}
\urladdr{https://sites.google.com/view/yotam-hendel}
\author{Sasha Sodin}
\address{School of Mathematical Sciences, Queen Mary  University of London, Mile End Road, London E1 4NS, UK  and Einstein Institute of Mathematics, The Hebrew University of Jerusalem, Givat Ram, Jerusalem 91904, Israel}
\email{a.sodin@qmul.ac.uk}
\urladdr{https://webspace.maths.qmul.ac.uk/a.sodin/}
\keywords{$L^p$-spaces, log-canonical threshold, analytic maps, pushforward measures, constructible functions, Young's convolution inequality, regularization by convolution}
\subjclass{14B05, 03C98, 14E15, 32B20, 60B15}
\begin{abstract}
Given a map $\phi:X\rightarrow Y$ between $F$-analytic manifolds
over a local field $F$ of characteristic $0$, we introduce an invariant
$\epsilon_{\star}(\phi)$ which quantifies the integrability of pushforwards
of smooth compactly supported measures by $\phi$. We further define
a local version $\epsilon_{\star}(\phi,x)$ near $x\in X$. These
invariants have a strong connection to the singularities of $\phi$.

When $Y$ is one-dimensional, we give an explicit formula for $\epsilon_{\star}(\phi,x)$,
and show it is asymptotically equivalent to other known singularity
invariants such as the $F$-log-canonical threshold $\operatorname{lct}_{F}(\phi-\phi(x);x)$
at $x$.

In the general case, we show that $\epsilon_{\star}(\phi,x)$ is bounded
from below by the $F$-log-canonical threshold $\lambda=\operatorname{lct}_{F}(\mathcal{J}_{\phi};x)$
of the Jacobian ideal $\mathcal{J}_{\phi}$ near $x$. If $\dim Y=\dim X$,
equality is attained. If $\dim Y<\dim X$, the inequality can be strict;
however, for $F=\mathbb{C}$, we establish the upper bound $\epsilon_{\star}(\phi,x)\leq\lambda/(1-\lambda)$,
whenever $\lambda<1$.

Finally, we specialize to polynomial maps $\varphi:X\rightarrow Y$
between smooth algebraic $\mathbb{Q}$-varieties $X$ and $Y$. We
geometrically characterize the condition that $\epsilon_{\star}(\varphi_{F})=\infty$
over a large family of local fields, by showing it is equivalent to
$\varphi$ being flat with fibers of semi-log-canonical singularities. 
\end{abstract}

\maketitle
\pagenumbering{arabic}

\global\long\def\Q{\mathbb{Q}}%
\global\long\def\N{\mathbb{N}}%
\global\long\def\R{\mathbb{\mathbb{R}}}%
\global\long\def\eps{\mathbb{\varepsilon}}%
\global\long\def\Z{\mathbb{Z}}%
\global\long\def\C{\mathbb{C}}%
\global\long\def\Qp{\mathbb{Q}_{p}}%
\global\long\def\Zp{\mathbb{Z}_{p}}%
\global\long\def\val{\mathbb{\mathrm{val}}}%
\global\long\def\Qp{\mathbb{Q}_{p}}%
\global\long\def\Zp{\mathbb{\mathbb{Z}}_{p}}%
\global\long\def\ac{\mathbb{\mathrm{ac}}}%
\global\long\def\S{\mathsection}%

\raggedbottom

\section{Introduction}

The goal of this paper is to explore a singularity invariant $\epsilon_{\star}(\phi)$
of a map $\phi$ between two manifolds over a local field. This invariant
quantifies the integrability of pushforward measures by $\phi$; we
define it in (\ref{eq:def-eps-star}) below after introducing some
notation that will also be used in the sequel.

Throughout this paper, we fix a local field $F$ of characteristic
$0$, i.e., $\mathbb{R}$, $\mathbb{C}$ or a finite extension of
$\mathbb{Q}_{p}$. If $X$ is an $F$-analytic manifold of dimension
$n$, let $(U_{\alpha}\subset X,\psi_{\alpha}:U_{\alpha}\to F^{n})_{\alpha\in\mathcal{A}}$
be an atlas. We denote by $C^{\infty}(X)$ the space of smooth functions
on $X$, i.e.\ functions $f:X\to\mathbb{C}$ such that $f\circ\psi_{\alpha}^{-1}|_{\psi_{\alpha}(U_{\alpha})}$
is smooth for each $\alpha\in\mathcal{A}$, and by $C_{c}^{\infty}(X)$
the subspace of compactly supported smooth functions (if $F$ is non-Archimedean,
smooth means locally constant). We similarly write $\mathcal{M}^{\infty}(X)$
for the space of smooth measures on $X$, i.e.\ measures such that
each $(\psi_{\alpha})_{*}(\mu|_{U_{\alpha}})$ has a smooth density
with respect to the Haar measure. We use $\mathcal{M}_{c}^{\infty}(X)$
to denote the space of smooth compactly supported measures on $X$.
For $1\leq q\leq\infty$, consider the class $\mathcal{M}_{c,q}(X)$
of finite Radon measures $\mu$ on $X$ that are compactly supported
and such that for any $\alpha\in\mathcal{A}$ the measure $(\psi_{\alpha})_{*}(\mu|_{U_{\alpha}})$
is absolutely continuous with density in $L^{q}(F^{n})$. All these
classes do not depend on the choice of the atlas. For $\mu\in\mathcal{M}_{c,1}(X)$
we define 
\[
\epsilon_{\star}(\mu):=\sup\left\{ \epsilon\geq0\,|\,\mu\in\mathcal{M}_{c,1+\epsilon}(X)\right\} .
\]
Note that by Jensen's inequality $\mu\in\mathcal{M}_{c,1+\epsilon}(X)$
for all $0\leq\epsilon<\epsilon_{\star}(\mu)$.

Now let $\phi:X\to Y$ be an analytic map between $F$-analytic manifolds
$X,Y$. If $\phi$ is \emph{locally dominant}, i.e.\ $\phi(U)$ contains
a non-empty open set for each open subset $U\subseteq X$, then $\phi_{*}\mu\in\mathcal{M}_{c,1}(Y)$
whenever $\mu\in\mathcal{M}_{c,1}(X)$. We now set for each $x\in X$,
\begin{equation}
\epsilon_{\star}(\phi;x):=\sup_{U\ni x}\inf_{\mu\in\mathcal{M}_{c}^{\infty}(U)}\epsilon_{\star}(\phi_{*}\mu)=\sup_{U\ni x}\inf_{\mu\in\mathcal{M}_{c,\infty}(U)}\epsilon_{\star}(\phi_{*}\mu),\label{eq:defeps-phi}
\end{equation}
where the supremum is over all open neighborhoods $U$ of $x$. Finally,
we can define 
\begin{equation}
\epsilon_{\star}(\phi):=\inf_{\mu\in\mathcal{M}_{c}^{\infty}(X)}\epsilon_{\star}(\phi_{*}\mu)=\inf_{x\in X}\epsilon_{\star}(\phi;x).\label{eq:def-eps-star}
\end{equation}
Note that if there exists $U\ni x$ such that $\phi_{*}\mu$ lies
in $\mathcal{M}_{c,q}(Y)$ for all $1<q<\infty$ and all $\mu\in\mathcal{M}_{c}^{\infty}(U)$,
then $\epsilon_{\star}(\phi;x)=\infty$. The best case scenario in
this setting is obtained when $\phi_{*}\mu\in\mathcal{M}_{c,\infty}(Y)$
for every $\mu\in\mathcal{M}_{c}^{\infty}(X)$. In this case we say
that $\phi$ is an\emph{ $L^{\infty}$-map.}

\medskip{}
The main motivation for $\epsilon_{\star}(\phi;x)$ comes from singularity
theory. In general, bad singularities of $\phi$ should manifest themselves
in poor analytic behavior of the pushforward $\phi_{*}\mu$ of $\mu\in\mathcal{M}_{c}^{\infty}(X)$.
This phenomenon has been extensively studied in the case $Y=F^{m}$,
through the analysis of the Fourier transform $\mathcal{F}(\phi_{*}\mu)$,
which takes the form of an \emph{oscillatory integral} in the Archimedean
case, and of an \emph{exponential sum} in the $p$-adic case. When
the dimension of the target space is equal to $m=1$, the rate of
decay of $\mathcal{F}(\varphi_{*}\mu)$ is closely related to singularity
invariants such as the log-canonical threshold (see e.g. \cite{Igu78},
\cite[Corollary 1.4.5]{Den91a} and \cite[Theorem 1.5]{CMN19} for
the non-Archimedean case, and \cite[Chapter 7]{AGV88} for the Archimedean
case). In higher dimension, the connections are less explicit, however,
milder singularities of $\phi$ still result in faster decay rates
of $\mathcal{F}(\phi_{*}\mu)$. For more information on $\mathcal{F}(\phi_{*}\mu)$,
we refer to Igusa's work \cite{Igu78}, the surveys of Denef, Meuser
and Le\'{o}n-Cardenal \cite{Den91a,LC22,Meu16}, as well as the book
\cite[Parts II, III]{AGV88} by Arnold, Guse\u{\i}n-Zade, and Varchenko,
and also the discussion in $\S$\ref{sub:Further discussion} below.

The study of the integrability properties of $\phi_{*}\mu$ and their
relation to the singularities of $\phi$ has received less treatment.
The invariant $\epsilon_{\star}(\phi;x)$ is a natural step in this
direction, and is more robust than Fourier-type invariants as it is
also meaningful when $Y$ is any smooth manifold, which is especially
important for applications (see $\mathsection$\ref{subsec:Application:-regularization-by}).

The invariant $\epsilon_{\star}(\phi;x)$ tends to be small whenever
the singularities of $\phi$ are bad near $x$. When $\phi$ is a
submersion, the pushforward $\phi_{*}\mu$ of any $\mu\in\mathcal{M}_{c}^{\infty}(X)$
is smooth and in particular lies in $\mathcal{M}_{c,\infty}(Y)$.
Moreover, Aizenbud and Avni \cite{AA16} have shown that for algebraic
maps $\varphi:X\rightarrow Y$ between smooth algebraic $\Q$-varieties,
the condition that the corresponding map $\varphi_{\Q_{p}}:X(\Q_{p})\rightarrow Y(\Q_{p})$
of $\Q_{p}$-analytic varieties is an $L^{\infty}$-map is equivalent
to a certain mild singularity property, namely that $\varphi$ is
flat with fibers of rational singularities (abbreviated (FRS), see
Definition~\ref{def:(FRS)}).

When analyzing $\epsilon_{\star}(\phi;x)$, one can further restrict
the infima in (\ref{eq:defeps-phi})\textendash (\ref{eq:def-eps-star})
to the class of compactly supported measures $\mu$ which are constructible,
in the sense of \cite[Section 3]{CGH14b} and \cite[Definition 1.2]{CM11}
(see also \cite{LR97}). This class is preserved under pushforward
by analytic maps, and therefore $\phi_{*}\mu$ is constructible as
well. Moreover, constructible measures admit a well behaved structure
theory and have tame analytic behavior (see e.g. \cite{CL08,CL10,CM11,CM13,CGH14b,CGH18}),
and, in particular, it follows from \cite{GH21,CM13} that $\epsilon_{\star}(\phi_{*}\mu)>0$.
Positivity of $\epsilon_{\star}(\phi_{*}\mu)$ in the real case can
further be deduced from \cite[Section 2]{RS88}. 

Our goal in this paper is to explore in more detail the properties
of $\epsilon_{\star}(\phi;x)$, and in particular to obtain upper
and lower bounds on $\epsilon_{\star}(\phi;x)$ in terms of other
singularity invariants which may be easier to compute, such as the
log-canonical threshold of certain ideals. In Theorem~\ref{thm:lowerbd}
(proved in $\mathsection$\ref{sec:lower bound}), we give a lower
bound on $\epsilon_{\star}(\phi;x)$. In particular, this provides
a proof for the positivity of $\epsilon_{\star}$, without using the
theory of motivic integration. In Theorem~\ref{thm:upper-C} (proved
in $\mathsection$\ref{sec:upper bound}) an upper bound on $\epsilon_{\star}(\phi;x)$
is given in the complex case. In Theorem~\ref{thm:one dimensional}
(proved in $\mathsection$\ref{sec:Formula-for-}) an explicit formula
is given for $\epsilon_{\star}(\phi;x)$ when $Y$ is one-dimensional,
over any local field. Finally, in Theorem \ref{thm:characterization of Lq for all q-intro}
(proved in $\mathsection$\ref{sec:Geometric-characterization-of}),
we specialize to polynomial maps between smooth algebraic varieties,
and geometrically characterize the condition $\epsilon_{\star}(\phi;x)=\infty$.

\subsection{\label{subsec:Application:-regularization-by}Application: regularization
by convolution}

Apart from the geometric motivation discussed above, an additional
source of motivation comes from the study of random walks on groups.
Assume that $G$ is an $F$-analytic group, and take a finite measure
$\nu$ on $G$. Can one find a number $k\in\mathbb{N}$ such that
the $k$-th convolution power $\nu^{*k}$ lies in $\mathcal{M}_{c,\infty}(G)$,
and if so, what is the smallest such number $k_{\star}(\nu)$?

An important class of examples comes from the realm of word maps.
Given a word $w$ in $r$ letters, by which we mean an element either
of the free group $F_{r}$, or of the free Lie algebra $\mathcal{L}_{r}$,
one can consider the corresponding word maps $w_{G}:G^{r}\rightarrow G$
or $w_{\mathfrak{g}}:\mathfrak{g}^{r}\rightarrow\mathfrak{g}$, where
$\mathfrak{g}$ is the Lie algebra of $G$. When $G$ is a compact
real or $p$-adic Lie group, $w$ induces a natural measure $\nu:=(w_{G})_{*}\pi_{G}$
where $\pi_{G}$ is the normalized Haar measure on $G$. Here, one
may further ask what is the $L^{\infty}$-mixing time of the word
measure $\nu$, namely, how large should $k$ be to ensure that $\left\Vert \nu^{*k}-\pi_{G}\right\Vert _{\infty}\ll1$.
Questions of this kind have been studied e.g. in \cite{AA16,LST19,GHb,AG,AGL}.

Similar problems have also appeared in a variety of other applications;
we mention the work of Ricci and Stein on singular integrals on non-abelian
groups (see the ICM survey of Stein \cite{Ste87}). In the study of
one-dimensional random operators, the regularity of the distribution
of transfer matrices (lying in SL($2,\mathbb{R}$), or, more generally,
in Sp($2W,\mathbb{R}$)) plays a role in the work of Shubin\textendash Vakilian\textendash Wolff
\cite{SVW98} as well as in the recent work \cite{GS22}. 

Motivated by the above examples, we focus on the following setting:
$G$ and $X$ are analytic, and $\nu=\phi_{*}\mu$ is the pushforward
of a measure $\mu\in\mathcal{M}_{c,\infty}(X)$ under a locally dominant
analytic map $\phi:X\to G$. For $x\in X$, let 
\[
k_{\star}(\phi;x)=\min_{U\ni x}\max_{\mu\in\mathcal{M}_{c}^{\infty}(U)}k_{\star}(\phi_{*}\mu),
\]
be the smallest $k$ that works for any $\mu$ supported in a sufficiently
small neighborhood $U$ of $x$.

According to Young's convolution inequality for locally compact groups
(see \cite[Corollary 2.3]{KR78} and Remark \ref{rem:Young for non-unimodular}),
\[
\nu\in\mathcal{M}_{c,1+\epsilon}(G)\Longrightarrow\nu^{*k}\in\mathcal{M}_{c,1+r}(G),\quad\text{where}\quad r=\begin{cases}
\frac{k\epsilon}{1-(k-1)\epsilon} & \text{if}\,k<\frac{1+\epsilon}{\epsilon},\\
\infty & \text{if}\,k\geq\frac{1+\epsilon}{\epsilon}.
\end{cases}
\]
This implies 
\begin{equation}
k_{\star}(\phi;x)\leq\left\lfloor \frac{1+\epsilon_{\star}(\phi;x)}{\epsilon_{\star}(\phi;x)}\right\rfloor +1<\infty.\label{eq:via-young}
\end{equation}
As mentioned above, a more classical approach to bounding $k_{\star}(\phi;x)$
relies on the study of the decay of the Fourier transform of $\phi_{*}\mu$.
While the Fourier-analytic approach often provides sharper bounds
(see \cite[Proposition 5.7]{GHb}), it is mainly applicable for abelian
groups such as $G=F^{n}$, or mildly non-abelian groups such as the
Heisenberg model. One can use non-commutative Fourier transform to
analyze compact Lie groups such as $\mathrm{SO}_{n}(\mathbb{R})$,
but such representation theoretic techniques are much less effective
for compact $p$-adic groups and non-compact Lie groups. To treat
the latter cases, one can use algebro-geometric techniques as in \cite{AA16,GHb};
however, this method requires some assumptions on $\phi$. Thus one
can argue that the approach to regularization via (\ref{eq:via-young})
is currently the most efficient one for treating $p$-adic analytic
groups and non-compact Lie groups such as $SL_{n}(\mathbb{R})$.

\subsection{\label{sub:main}Main results }

We now discuss the main results in this paper.

\subsubsection{\label{subsec:A-lower-bound}A lower bound on $\epsilon_{\star}$}

While the mere positivity of $\epsilon_{\star}$ (and the mere finiteness
of $k_{\star}$) are sufficient for some applications, other ones
require explicit bounds. Our first result provides a bound in terms
of an important exponent known as the log-canonical threshold, see
e.g. \cite{Mus12,Kol} for $F=\C$. For an analytic map $\psi:X\to F$,
define the \emph{$F$-log-canonical threshold} 
\begin{equation}
\operatorname{lct}_{F}(\psi;x):=\sup\left\{ s>0:\exists U\ni x\text{ s.t.}\,\forall\mu\in\mathcal{M}_{c,\infty}(U),\,\int_{X}\left|\psi(x)\right|_{F}^{-s}d\mu(x)<\infty\right\} ,\label{eq:def-lct}
\end{equation}
where $U$ runs over all open neighborhoods of $x$, and $\left|\,\cdot\,\right|_{F}$
is the absolute value on $F$, normalized such that $\mu_{F}(aA)=\left|a\right|_{F}\cdot\mu_{F}(A)$,
for all $a\in F^{\times}$, $A\subseteq F$, and where $\mu_{F}$
is a Haar measure on $F$. In particular, $\left|\cdot\right|_{\C}=\left|\cdot\right|^{2}$
is the square of the usual absolute value $\left|\cdot\right|$ on
$\C$. More generally, if $J$ is a non-zero ideal of analytic functions
generated by $\psi_{1},\cdots,\psi_{\ell}$, define
\begin{equation}
\operatorname{lct}_{F}(J;x):=\sup\left\{ s>0:\exists U\ni x\text{ \,s.t.\, }\forall\mu\in\mathcal{M}_{c,\infty}(U),\,\int_{X}\min_{1\leq i\leq l}\Big[\left|\psi_{i}(x)\right|_{F}^{-s}\Big]d\mu(x)<\infty\right\} .\label{eq:lct-analytic definition}
\end{equation}
This definition does not depend on the choice of the generators, and
thus it extends in a straightforward way to sheaves of ideals. Furthermore,
the log-canonical threshold is always strictly positive (see $\mathsection$\ref{sec:Embedded-resolution-of}).

Given a locally dominant analytic map $\phi:X\to Y$ between two $F$-analytic
manifolds, we define the \emph{Jacobian ideal sheaf} $\mathcal{J}_{\phi}$
as follows. If $X\subseteq F^{n}$ and $Y\subseteq F^{m}$ are open
subsets, we define $\mathcal{J}_{\phi}$ to be the ideal in the algebra
of analytic functions on $X$, generated by the $m\times m$-minors
of the differential $d_{x}(\phi)$ of $\phi$. Note that if $\psi_{1}:X\rightarrow X'\subseteq F^{n}$
and $\psi_{2}:Y\rightarrow Y'\subseteq F^{m}$ are analytic diffeomorphisms,
then $\psi_{1}^{*}\left(\mathcal{J}_{\psi_{2}\circ\phi\circ\psi_{1}^{-1}}\right)=\mathcal{J}_{\phi}$.
Hence, the definition of $\mathcal{J}_{\phi}$ can be generalized
(or glued) to an ideal sheaf on $X$, if $X$ and $Y$ are $F$-analytic
manifolds.

We now describe the first main result, which provides a lower bound
on $\epsilon_{\star}(\phi;x)$ in terms of the Jacobian ideal $\mathcal{J}_{\phi}$
of $\phi$. The proof is given in $\mathsection$\ref{sec:lower bound}. 
\begin{thm}
\label{thm:lowerbd}Let $X,Y$ be analytic $F$-manifolds, $\dim X=n\geq\dim Y=m$,
and let $\phi:X\to Y$ be a locally dominant analytic map. Then for
every $x\in X$, 
\begin{equation}
\epsilon_{\star}(\phi;x)\geq\operatorname{lct}_{F}(\mathcal{J}_{\phi};x);\label{eq:lowerbd}
\end{equation}
if $m=n$, equality is achieved. 
\end{thm}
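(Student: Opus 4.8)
The plan is to prove the inequality \eqref{eq:lowerbd} by reducing, via an embedded resolution of singularities of the Jacobian ideal $\mathcal{J}_\phi$, to a local model computation, and then to prove the reverse inequality when $m=n$ by a direct change-of-variables argument. First I would work locally: fix $x\in X$, choose charts so that $X\subseteq F^n$ and $Y\subseteq F^m$ are open, and take an arbitrary $\mu\in\mathcal{M}^\infty_c(U)$ supported near $x$. The question is to bound from below the exponent $\epsilon$ for which $\phi_*\mu$ has an $L^{1+\epsilon}$-density. The basic inequality relating the two sides should come from the coarea/Fubini formula: writing the density of $\phi_*\mu$ at a generic point $y$ as an integral over the fiber $\phi^{-1}(y)$ against $|\!\operatorname{Jac}\phi|_F^{-1}$ times a smooth factor, one sees that integrability of $\phi_*\mu$ against the $(1+\epsilon)$-th power is controlled by integrability of $|\!\operatorname{Jac}\phi|_F^{-\epsilon'}$ (for a suitable $\epsilon'$ comparable to $\epsilon$) over $X$ against smooth measures — which is exactly what $\operatorname{lct}_F(\mathcal{J}_\phi;x)$ measures through \eqref{eq:lct-analytic definition}.

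To make this precise I would pass to a log-resolution $p\colon \widetilde X\to X$ on which $p^*\mathcal{J}_\phi$ is locally principal and monomial, and pull $\mu$ back; in local coordinates on $\widetilde X$ the pulled-back data becomes a product of $|y_i|_F^{a_i}$ factors for the Jacobian-minors generator and $|y_i|_F^{b_i}$ factors for the Jacobian of $p$ itself. The log-canonical threshold of $\mathcal{J}_\phi$ at $x$ is then $\min_i (b_i+1)/a_i$, and one has to check that, after integrating out the fiber directions of $\phi\circ p$, the resulting density on $Y$ lies in $L^{1+\epsilon}$ precisely when $\epsilon a_i < b_i + 1$ for all $i$ — i.e.\ when $\epsilon < \operatorname{lct}_F(\mathcal{J}_\phi;x)$. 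This is the technical heart of the argument: one needs to handle the fiber integration on the resolution carefully, controlling how the monomial exponents interact with the map to $Y$, and to ensure the estimate is uniform over all $\mu\in\mathcal{M}^\infty_c(U)$ for a fixed small $U$, so that the supremum over $U$ and infimum over $\mu$ in \eqref{eq:defeps-phi} indeed produce a bound $\geq \operatorname{lct}_F(\mathcal{J}_\phi;x)$. I expect the main obstacle to be exactly this bookkeeping — making sure the fiberwise integral of the monomial density over $\phi\circ p$ does not worsen the exponent, which presumably uses that the relevant minors of $d\phi$ generate $\mathcal{J}_\phi$ and hence the "worst" direction transverse to the fibers is already accounted for.

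For the equality case $m=n$, the map $\phi$ is a local analytic isomorphism away from the zero locus of $\operatorname{Jac}\phi$, and $\mathcal{J}_\phi$ is the principal ideal generated by the single function $\operatorname{Jac}\phi$. Here I would argue directly: for $\mu$ with smooth density $f$, the density of $\phi_*\mu$ at $y$ is $\sum_{x\in\phi^{-1}(y)} f(x)\,|\!\operatorname{Jac}\phi(x)|_F^{-1}$, and the change of variables $y=\phi(x)$ converts $\int |\phi_*\mu|^{1+\epsilon}$ into an integral over $X$ of (a smooth bounded factor times) $|\!\operatorname{Jac}\phi|_F^{-\epsilon}$ against Lebesgue measure near $x$ — finite exactly when $\epsilon<\operatorname{lct}_F(\operatorname{Jac}\phi;x)$. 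Combined with the general lower bound this forces equality. To turn the lower-bound inequality into equality one also needs the matching upper bound $\epsilon_\star(\phi;x)\le\operatorname{lct}_F(\mathcal{J}_\phi;x)$ in the equidimensional case, which the change-of-variables formula supplies by choosing $\mu$ with density bounded below near $x$, so that the two integrals are genuinely comparable and blow up together as $\epsilon\uparrow\operatorname{lct}_F(\operatorname{Jac}\phi;x)$.
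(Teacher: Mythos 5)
Your treatment of the equidimensional case $m=n$ is essentially the paper's argument: one writes $\int g^{1+s}\,dy=\int (g\circ\phi)^{s}\,d\mu$, observes that $g\circ\phi\geq|\!\operatorname{Jac}\phi|_F^{-1}$ termwise to get the lower bound, and uses Jensen together with a uniform bound $\#\phi^{-1}(y)\leq M$ (valid on a dense open set, after shrinking the domain) to get the matching upper bound. Be careful that your phrasing ``converts $\int|\phi_*\mu|^{1+\epsilon}$ into an integral of $|\!\operatorname{Jac}\phi|_F^{-\epsilon}$'' glosses over the multiplicity issue: the density $g(y)$ is a sum over preimages and $g^{1+\epsilon}$ does not split termwise, so one genuinely needs both the bounded-multiplicity estimate and Jensen.

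For the general case $n>m$, your proposed route via a log-resolution of $\mathcal{J}_\phi$ has a genuine gap exactly at the step you flag as the ``technical heart.'' Principalizing $\mathcal{J}_\phi$ on a resolution $p\colon\widetilde{X}\to X$ gives monomial control of the maximal minors and of $\operatorname{Jac}(p)$, but this does \emph{not} give monomial (or otherwise usable) structure for the map $\phi\circ p\colon\widetilde{X}\to Y$ itself: by Cauchy--Binet the $m\times m$-minors of $d(\phi\circ p)$ are sums of products of minors of $d\phi$ and of $dp$, and the needed fiberwise integration along $\phi\circ p$ is not reduced to the monomial case. You would essentially have to redo Hironaka in a way that is simultaneously adapted to $\mathcal{J}_\phi$ and to the fibration structure of $\phi$, which is not what log-principalization provides. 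The paper avoids the resolution entirely and instead decomposes a small ball $B$ around $x$ into the regions $V_I=\{x\in B:|M_I(x)|_F=\max_{I'}|M_{I'}(x)|_F\}$ indexed by $m$-element subsets $I$, extends $\phi$ on $V_I$ to the equidimensional map $\phi_I(x)=(\phi(x),x_{j_1},\dots,x_{j_{n-m}})$ whose Jacobian is exactly $M_I$, applies Jensen to compare $\phi_*\mu_I$ with $(\phi_I)_*\mu_I$, and then invokes the already-established equidimensional case region by region. Summing over $I$ and using that $\bigcup_I V_I$ has full measure yields $\int g^{1+s}\lesssim\int_B\min_I|M_I(x)|_F^{-s}\,dx$, which is finite for $s<\operatorname{lct}_F(\mathcal{J}_\phi;x)$ by the very definition \eqref{eq:lct-analytic definition}. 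This minor-domination trick is the key idea your proposal is missing.
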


As we will see later in Remark \ref{rem:discussion on upper bound}(2),
for $\dim X>\dim Y$ the inequality (\ref{eq:lowerbd}) may be strict.
In the next paragraph we discuss an additional case in which $\epsilon_{\star}$
can be computed explicitly: $\dim Y=1$.

\subsubsection{\label{subsec:A-formula-for}A formula in the one-dimensional case
and a reverse Young inequality}

When the target space $Y$ is one-dimensional, Hironaka's theorem
on the embedded resolution of singularities \cite{Hir64} provides
a powerful tool to study the structural properties of algebraic and
analytic maps. This theorem, as well the asymptotic expansion of pushforward
measures about a critical value of the map, allows us to obtain the
following much more detailed results, the proofs of which are given
in $\S$\ref{sec:Formula-for-}.

The first one is an exact formula relating $\epsilon_{\star}$ to
the log-canonical threshold. 
\begin{thm}
\label{thm:one dimensional}Let $X$ be an analytic $F$-manifold,
and let $\phi:X\to F$ be a locally dominant analytic map. Then for
each $x\in X$, we have: 
\begin{equation}
\epsilon_{\star}(\phi;x)=\begin{cases}
\infty & \text{if }\operatorname{lct}_{F}(\phi-\phi(x);x)\geq1,\\
\frac{\operatorname{lct}_{F}(\phi-\phi(x);x)}{1-\operatorname{lct}_{F}(\phi-\phi(x);x)} & \text{if }\operatorname{lct}_{F}(\phi-\phi(x);x)<1.
\end{cases}\label{eq:formula for epsilon one dimensional}
\end{equation}
\end{thm}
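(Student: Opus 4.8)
The plan is to prove the two inequalities making up (\ref{eq:formula for epsilon one dimensional}) separately: an upper bound $\epsilon_\star(\phi;x)\le\lambda/(1-\lambda)$, relevant only when $\lambda:=\operatorname{lct}_F(\phi-\phi(x);x)<1$, and a matching lower bound, to be read as $\epsilon_\star(\phi;x)=\infty$ when $\lambda\ge1$. The upper bound will be soft, using only H\"older's inequality and the definition (\ref{eq:def-lct}); the lower bound will use Hironaka's resolution \cite{Hir64} together with the asymptotic expansion of pushforward densities about a critical value. First I would reduce to $\phi(x)=0$ by translating in $F$, pass to a coordinate chart $X\subseteq F^n$, and note that if $\phi$ is submersive at $x$ then $\phi_*\mu$ is smooth for every $\mu$, so $\epsilon_\star(\phi;x)=\infty$, in agreement with the formula since then $\lambda=1$; thus I may assume $d_x\phi=0$. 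Recall that $\lambda>0$. I emphasize that Theorem~\ref{thm:lowerbd} by itself does \emph{not} yield the lower bound here, since $\operatorname{lct}_F(\mathcal J_\phi;x)$ can be strictly smaller than $\lambda/(1-\lambda)$ --- for instance for $\phi=x_1^a x_2^a$ with $a\ge2$, where $\operatorname{lct}_F(\mathcal J_\phi;x)=2/(2a-1)<1/(a-1)=\lambda/(1-\lambda)$.

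For the upper bound, fix $\lambda<1$ and an arbitrary neighborhood $U\ni x$, and choose a positive $\mu\in\mathcal M_c^\infty(U)$ whose density is bounded below by a positive constant on a neighborhood of $x$; then $\phi_*\mu$ has a compactly supported density $f_\mu\in L^1(F)$. Assuming $\phi_*\mu\in\mathcal M_{c,1+\epsilon}(F)$, I would apply H\"older's inequality with exponents $1+\epsilon$ and $(1+\epsilon)/\epsilon$ to $\int_{|t|_F\le1}|t|_F^{-\gamma}f_\mu(t)\,dt$. Since $\int_{|t|_F\le1}|t|_F^{a}\,dt$ is finite exactly when $a>-1$, uniformly over $F=\R,\C$ and the $p$-adic fields with the normalizations fixed in the introduction, the $L^{(1+\epsilon)/\epsilon}$-norm of $|t|_F^{-\gamma}$ over $\{|t|_F\le1\}$ is finite for $\gamma<\epsilon/(1+\epsilon)$, and, $f_\mu$ being compactly supported,
\[
\int_X|\phi(y)-\phi(x)|_F^{-\gamma}\,d\mu(y)=\int_F|t|_F^{-\gamma}f_\mu(t)\,dt<\infty\qquad\text{for every }\gamma<\tfrac{\epsilon}{1+\epsilon}.
\]
On the other hand, by (\ref{eq:def-lct}) together with the lower bound on the density of $\mu$ near $x$ (comparing $\mu$ with a constant multiple of a Haar measure there), the left-hand side diverges for every $\gamma>\lambda$. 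Hence $\epsilon/(1+\epsilon)\le\lambda$, i.e. $\epsilon\le\lambda/(1-\lambda)$, so $\epsilon_\star(\phi_*\mu)\le\lambda/(1-\lambda)$; since such a $\mu$ lies in every $\mathcal M_c^\infty(U)$, taking the infimum over $\mu$ and then the supremum over $U$ gives $\epsilon_\star(\phi;x)\le\lambda/(1-\lambda)$. When $\lambda\ge1$ this argument imposes no constraint, as it should.

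For the lower bound, given $\eta>0$ I would use the lower semicontinuity of the log-canonical threshold in the family $\{\phi-c\}_{c\in F}$ to produce a relatively compact neighborhood $U\ni x$ with $\operatorname{lct}_F(\phi-\phi(y);y)\ge\lambda-\eta$ for all $y\in U$. Fix $\mu\in\mathcal M_c^\infty(U)$. By Sard's theorem (for $F$ non-Archimedean, by constructibility of $\phi_*\mu$, cf. \cite{CGH14,CM11}), $f_\mu$ has only finitely many singular points in $\overline{\phi(U)}$ and is bounded on the complement of any neighborhood of them. Near such a point $c$, I would resolve the divisor of $\phi-c$, pull $\mu$ back along the resolution, and reduce to computing the pushforward to $F$ of a monomial measure under a monomial map; the resulting expansion of $f_\mu$ about $c$ (cf. \cite[Chapter~7]{AGV88} and \cite{Igu78,Den91a,CMN19}) has all exponents bounded below by $\min_{y\in\phi^{-1}(c)\cap\operatorname{supp}\mu}\operatorname{lct}_F(\phi-c;y)-1\ge\lambda-\eta-1$, up to factors that are at most powers of $\log|t-c|_F^{-1}$ (resp. of $\val(t-c)$) of degree $\le n-1$; here I use that each $y\in\phi^{-1}(c)\cap\operatorname{supp}\mu$ lies in $U$ and satisfies $\phi(y)=c$, so $\operatorname{lct}_F(\phi-c;y)=\operatorname{lct}_F(\phi-\phi(y);y)\ge\lambda-\eta$. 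Thus $|f_\mu(t)|\lesssim_\mu|t-c|_F^{\lambda-\eta-1}(\log|t-c|_F^{-1})^{n-1}$ near each singular $c$, so $f_\mu\in L^{1+\epsilon}(F)$ whenever $(\lambda-\eta-1)(1+\epsilon)>-1$, i.e. whenever $\epsilon<(\lambda-\eta)/(1-\lambda+\eta)$ if $\lambda-\eta<1$, and for all $\epsilon$ otherwise. Hence $\inf_{\mu\in\mathcal M_c^\infty(U)}\epsilon_\star(\phi_*\mu)\ge(\lambda-\eta)/(1-\lambda+\eta)$; letting $\eta\downarrow0$ and taking the supremum over $U$ gives $\epsilon_\star(\phi;x)\ge\lambda/(1-\lambda)$ when $\lambda<1$ and $\epsilon_\star(\phi;x)=\infty$ when $\lambda\ge1$, which together with the previous paragraph yields (\ref{eq:formula for epsilon one dimensional}).

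The upper bound is essentially immediate; the main obstacle is the lower bound. The delicate part is to produce the asymptotic expansion of the pushforward density $f_\mu$ about a critical value with leading exponent governed exactly by the log-canonical threshold, and to make it uniform enough to control all of $\phi(U)$ simultaneously --- in the Archimedean case via the asymptotics of oscillatory-type integrals, and in the non-Archimedean case via the eventually periodic behaviour of the coefficients of Igusa-type zeta functions rather than genuine asymptotics. The secondary obstacle is the semicontinuity of log-canonical thresholds in the family $\{\phi-c\}$, which is precisely what lets one pass from a pointwise estimate at $\phi(x)$ to global $L^{1+\epsilon}$-integrability on $\phi(U)$ by controlling the singularities of $f_\mu$ at the remaining critical values.
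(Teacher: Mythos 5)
Your decomposition into a soft upper bound and a resolution-based lower bound matches the paper's structure, and your upper-bound argument is essentially a dual reformulation of the paper's Lemma~\ref{lem:upper bound epsilon n=1} (you use H\"older where the paper uses a Jensen-on-balls estimate followed by Fubini; both give $\operatorname{lct}_F(\phi_x;x)\geq 1-\tfrac{1}{1+\epsilon_\star}$). For the lower bound, you use the asymptotic expansion of $f_\mu$ (Theorem~\ref{thm:asymptotic expansion}) as a black box, whereas the paper resolves $\phi_{x_0}$ once and then computes $\epsilon_\star$ directly for the resulting monomial data (Lemma~\ref{lem:formula for monomial maps}) and identifies the relevant minimum via Lemma~\ref{Lemma:F-lct}. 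These are genuinely different routes to the same estimate: yours imports the heavier machinery of Igusa's expansion (which the paper anyway proves and uses for Theorem~\ref{thm:reverse Young-intro}), while the paper's is more self-contained and also yields the upper bound for free from the equality case of the monomial lemma. Your observation that Theorem~\ref{thm:lowerbd} alone is insufficient, with the example $x_1^a x_2^a$, is correct and a useful sanity check.

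There is, however, a genuine gap in the lower bound: your appeal to ``lower semicontinuity of $\operatorname{lct}_F$ in the family $\{\phi-c\}_{c\in F}$'' to ensure $\operatorname{lct}_F(\phi-\phi(y);y)\geq\lambda-\eta$ for all $y\in U$ is not available in the stated generality. First, this is outright false over $\R$ (and the $p$-adic fields): for $\phi=x^2+y^2+z^2$ one has $\operatorname{lct}_\R(\phi;0)=3/2$ while $\operatorname{lct}_\R(\phi-\phi(y);y)=1$ at every nearby regular $y$, since over $\R$ and $\Q_p$, unlike over $\C$, the $F$-log-canonical threshold of a single function can exceed $1$; so the function $y\mapsto\operatorname{lct}_F(\phi-\phi(y);y)$ drops as $y$ leaves the critical locus. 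Second, Varchenko's semicontinuity theorem, which the paper cites for a different purpose, is a statement over $\C$ and does not cover $\R$ or $\Q_p$. Fortunately, your argument does not actually need the semicontinuity of the full family. What it needs is: (i) for $U$ small enough, $\phi(x)$ is the \emph{only} critical value of $\phi|_U$ (this follows because the critical set is analytic, hence locally a finite union of irreducible components, and $\phi$ is constant on each such component; components not through $x$ are avoided by shrinking $U$), so that $f_\mu$ is bounded away from $\phi(x)$; and (ii) for $y\in\phi^{-1}(\phi(x))\cap U$ with $U$ small, $\operatorname{lct}_F(\phi-\phi(x);y)\geq\lambda-\eta$, which is immediate from the definition~(\ref{eq:def-lct}) of $\operatorname{lct}_F(\phi-\phi(x);x)$ as a supremum over neighborhoods of $x$: the same neighborhood $U'$ that witnesses $\operatorname{lct}_F(\phi_x;x)>\lambda-\eta$ also witnesses $\operatorname{lct}_F(\phi_x;y)>\lambda-\eta$ for every $y\in U'$. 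Replacing the Varchenko-type invocation by (i) and (ii), and noting that for $\lambda>1$ the argument already gives $\epsilon_\star=\infty$ once $\eta<\lambda-1$ rather than requiring $\eta\downarrow0$, closes the gap.
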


By Theorem \ref{thm:one dimensional}, by (\ref{eq:via-young}) and
by a Thom\textendash Sebastiani type result for $\operatorname{lct}_{F}$
(Proposition \ref{prop:Thom--Sebastiani}(1)), one can further show:
\begin{equation}
\left\lceil \frac{1}{\operatorname{lct}_{F}(\phi_{x};x)}\right\rceil \leq k_{\star}(\phi;x)\leq\left\lfloor \frac{1}{\operatorname{lct}_{F}(\phi_{x};x)}\right\rfloor +1.\label{eq:lct is equivalent to 1/k}
\end{equation}
We therefore see that that $\epsilon_{\star}(\phi;x),\operatorname{lct}_{F}(\phi_{x};x)$
and $1/k_{\star}(\phi;x)$ are asymptotically equivalent as $\operatorname{lct}_{F}(\phi_{x};x)\rightarrow0$.
In $\S$\ref{sub:Further discussion}, we shall see that these invariants
are also tightly related to an invariant $\delta_{\star}$ quantifying
Fourier decay. We will further see in $\S$\ref{subsec:Higher-dimensions}
that the close relation between all these quantities is a special
feature of the one-dimensional case, and does not generalize to higher
dimensions.\medskip{}

We next provide a reverse Young result for pushforward measures by
analytic maps. Recall that Young's convolution inequality (see e.g.\ \cite[pp.\ 54-55]{Wei40})
implies that 
\begin{equation}
\frac{\epsilon_{1}}{1+\epsilon_{1}}+\frac{\epsilon_{2}}{1+\epsilon_{2}}\geq\frac{\epsilon}{1+\epsilon}\Longrightarrow\mathcal{M}_{c,1+\epsilon_{1}}(F)*\mathcal{M}_{c,1+\epsilon_{2}}(F)\subseteq\mathcal{M}_{c,1+\epsilon}(F).\label{eq:Young's inequality}
\end{equation}
Using the connection between $\epsilon_{\star}(\phi;x)$ and $k_{\star}(\phi;x)$
as well as the structure of pushforward measures, we show the following
converse to (\ref{eq:Young's inequality}): 
\begin{thm}[Reverse Young inequality]
\label{thm:reverse Young-intro}Let $\nu_{1},\nu_{2}\in\mathcal{M}_{c,1}(F)$
be pushforward measures of the form $\nu_{j}=(\phi_{j})_{*}\mu_{j}$,
where $\mu_{j}\in\mathcal{M}_{c}^{\infty}(F^{n_{j}})$ and $\phi_{j}:F^{n_{j}}\to F$
are analytic, locally dominant. If $\nu_{1}*\nu_{2}\in\mathcal{M}_{c,1+\epsilon}(F)$
for some $\epsilon>0$, then 
\begin{equation}
\frac{\epsilon_{\star}(\nu_{1})}{1+\epsilon_{\star}(\nu_{1})}+\frac{\epsilon_{\star}(\nu_{2})}{1+\epsilon_{\star}(\nu_{2})}>\frac{\epsilon}{1+\epsilon}.\label{eq:revyoung}
\end{equation}
In particular, if $\nu_{1}$ is equal to $\nu_{2}$, it lies in $\mathcal{M}_{c,1+\frac{\epsilon}{2+\epsilon}}(F)$. 
\end{thm}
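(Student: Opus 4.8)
The plan is to reduce the statement to the known asymptotic structure of one-dimensional pushforward densities near their critical values, and then to use \emph{positivity} to rule out cancellation in the convolution. Write $\nu_j=g_j\,d\mu_F$; since $\nu_j$ is the pushforward of a compactly supported smooth measure it lies in $\mathcal{M}_{c,1}(F)$, so $g_j\in L^1(F)$ is compactly supported, and $g_j\geq 0$ because $\nu_j$ is a positive measure. If $\epsilon_\star(\nu_1)=\infty$ or $\epsilon_\star(\nu_2)=\infty$, then the left-hand side of (\ref{eq:revyoung}) is at least $1>\tfrac{\epsilon}{1+\epsilon}$, so we may assume $\epsilon_\star(\nu_1),\epsilon_\star(\nu_2)<\infty$. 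I would then invoke the asymptotic expansion of one-dimensional pushforward measures about a critical value (the tool underlying the proof of Theorem~\ref{thm:one dimensional}, obtained via Hironaka's embedded resolution, resp.\ its non-Archimedean analogue): $g_j$ is smooth away from a finite set $S_j\subset F$, and near each $c\in S_j$ its leading term is $B_c\,|t-c|_F^{\beta_c^{(j)}}\bigl(\log|t-c|_F^{-1}\bigr)^{m_c^{(j)}}$ with $\beta_c^{(j)}\in\mathbb{Q}$ and $m_c^{(j)}\in\mathbb{Z}_{\geq 0}$ (in the non-Archimedean case the logarithm is replaced by a polynomial of degree $m_c^{(j)}$ in $\operatorname{val}(t-c)$). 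Put $\beta^{(j)}:=\min_{c\in S_j}\beta_c^{(j)}$; since $g_j\in L^1$ and $\epsilon_\star(\nu_j)<\infty$ one has $\beta^{(j)}\in(-1,0)$, and the same analysis gives $\epsilon_\star(\nu_j)=\tfrac1{-\beta^{(j)}}-1$, i.e.\ $\beta^{(j)}=-\tfrac1{1+\epsilon_\star(\nu_j)}$ and $\tfrac{\epsilon_\star(\nu_j)}{1+\epsilon_\star(\nu_j)}=1+\beta^{(j)}$. Finally --- the crucial use of positivity --- at a worst critical value $c_j$ (with $\beta_{c_j}^{(j)}=\beta^{(j)}$) the leading coefficient $B_{c_j}$ is strictly positive, since $g_j\geq 0$ while the leading monomial blows up; discarding the (eventually $\geq 1$) logarithmic factor yields a constant $A_j>0$ and a punctured neighbourhood of $c_j$ on which $g_j(t)\geq A_j\,|t-c_j|_F^{\beta^{(j)}}$ (in the non-Archimedean case, on a subset of each small sphere about $c_j$ of positive relative measure).

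Next I would bound $h:=g_1*g_2$, the density of $\nu_1*\nu_2$, from below near $t_0:=c_1+c_2$. If $\beta^{(1)}+\beta^{(2)}+1\geq 0$ there is nothing to prove: the left-hand side of (\ref{eq:revyoung}) equals $2+\beta^{(1)}+\beta^{(2)}\geq 1>\tfrac{\epsilon}{1+\epsilon}$. Assume then $\beta^{(1)}+\beta^{(2)}+1<0$. Using $g_1,g_2\geq 0$ and the two lower bounds,
\[
h(t)\;\geq\;A_1A_2\int_{|r|_F<\delta,\;|t-t_0-r|_F<\delta}|r|_F^{\beta^{(1)}}\,|t-t_0-r|_F^{\beta^{(2)}}\,d\mu_F(r),
\]
and a standard local-field Beta-type estimate (rescale $r\mapsto|t-t_0|_F\,r$ in the Archimedean case; sum a geometric series over spheres in the non-Archimedean case, using the positive-density sets from the previous step) gives $h(t)\geq c\,|t-t_0|_F^{\beta^{(1)}+\beta^{(2)}+1}$ for $t$ near $t_0$, with $c>0$. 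As the exponent is negative and carries \emph{no} logarithmic factor, $\int_{|t-t_0|_F<\rho}|h(t)|^{1+\epsilon}\,d\mu_F(t)=\infty$ whenever $(1+\epsilon)\bigl(\beta^{(1)}+\beta^{(2)}+1\bigr)\leq-1$. Therefore the hypothesis $\nu_1*\nu_2\in\mathcal{M}_{c,1+\epsilon}(F)$ forces $(1+\epsilon)\bigl(\beta^{(1)}+\beta^{(2)}+1\bigr)>-1$, which, after substituting $\beta^{(j)}=-\tfrac1{1+\epsilon_\star(\nu_j)}$, is exactly (\ref{eq:revyoung}).

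The ``in particular'' claim follows by setting $\nu_1=\nu_2$ and $e:=\epsilon_\star(\nu_1)$ in (\ref{eq:revyoung}): the inequality $\tfrac{2e}{1+e}>\tfrac{\epsilon}{1+\epsilon}$ is equivalent to $e>\tfrac{\epsilon}{2+\epsilon}$, whence $\nu_1\in\mathcal{M}_{c,1+\epsilon'}(F)$ for every $\epsilon'<e$, in particular for $\epsilon'=\tfrac{\epsilon}{2+\epsilon}$.

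I expect the main obstacle to be the first step: extracting an honest pointwise (resp.\ sphere-wise) lower bound $g_j(t)\geq A_j|t-c_j|_F^{\beta^{(j)}}$ from the resolution-of-singularities description of the pushforward density, and in particular checking that the leading coefficient of the expansion of $g_j$ at a worst critical value is strictly positive --- this is the place where positivity of $\nu_j$ is essential, and it is also what ensures that the several pairs $(c,c')$ with $c+c'=t_0$ contributing to $h$ near $t_0$ cannot cancel. Granting this, the Beta-integral estimate and the arithmetic are routine; note that the strict inequality in (\ref{eq:revyoung}) is precisely what absorbs the nonnegative powers of $\log$ (resp.\ of $\operatorname{val}$), which leave the $L^{1+\epsilon}$-threshold unchanged for every finite $\epsilon$.
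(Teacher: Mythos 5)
Your argument is correct, but it takes a genuinely different route from the paper's. The paper proves the theorem on the Fourier side: it first splits each $\mu_j$ into pieces pushed forward by maps with a single critical value, converts $\epsilon_\star$ into the Fourier decay exponent $\delta_\star$ via Corollary~\ref{cor:connection between epsilon and delta}, uses that convolution becomes a product under Fourier transform to bound $\delta_\star$ of the convolution by the sum (the tacit non-cancellation here comes from item (3) of Theorem~\ref{thm:asymptotic expansion}, which guarantees the leading Fourier coefficient $b_{k_0,m_{k_0},\mu}$ does not vanish identically), converts back to $\epsilon_\star$, and derives a contradiction using Corollary~\ref{cor:notach} (the supremum in $\epsilon_\star$ is not attained). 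You instead work entirely on the physical side: you read off the leading exponent $\beta^{(j)}=-1/(1+\epsilon_\star(\nu_j))$ from the density expansion (\ref{eq:asympg}), use the positivity of $g_j$ to pin down the sign of the leading coefficient and obtain a lower bound $g_j(t)\gtrsim |t-c_j|_F^{\beta^{(j)}}$, and then lower-bound the convolution $g_1*g_2$ by a Beta-type integral near $t_0=c_1+c_2$. Both proofs ultimately rest on Theorem~\ref{thm:asymptotic expansion}, but they use complementary halves of it; your version avoids the Fourier transform and Corollaries~\ref{cor:notach} and \ref{cor:connection between epsilon and delta} entirely, and makes the role of positivity explicit --- it is what forbids cancellation between the various critical pairs $(c,c')$ with $c+c'=t_0$. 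The price, which you correctly flag, is the directional lower-bound step: for $F=\mathbb{R}$ the blow-up of $g_j$ at $c_j$ may be one-sided, and for $F=\mathbb{C}$ or non-Archimedean $F$ the leading coefficient $a_{k_0,m_{k_0},\mu}(\omega)$ may vanish for some angular components $\omega$, so the lower bound only holds on a cone (or a positive-measure set of angular directions), and this must be carried through the Beta-integral estimate (which it can --- the one-sided/sectorial versions of the Beta integral still blow up at the advertised rate for $s$ on one side, since $\beta^{(1)}+\beta^{(2)}<-1$ makes the improper tail $\int_1^{\infty}u^{\beta_1}(u-1)^{\beta_2}\,du$ converge). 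With those details supplied, the conclusion and the final arithmetic substitution $\beta^{(j)}=-1/(1+\epsilon_\star(\nu_j))$ yielding (\ref{eq:revyoung}) are sound.
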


\begin{rem}
Under the assumptions of Theorem \ref{thm:reverse Young-intro}, if
$\nu_{1}*\nu_{2}\in\mathcal{M}_{c,\infty}(F)$ then (\ref{eq:revyoung})
(applied with $\epsilon\to+\infty$) implies 
\begin{equation}
\frac{\epsilon_{\star}(\nu_{1})}{1+\epsilon_{\star}(\nu_{1})}+\frac{\epsilon_{\star}(\nu_{2})}{1+\epsilon_{\star}(\nu_{2})}\geq1.\label{eq:Young for infty}
\end{equation}
In general, one cannot hope for a strict inequality in (\ref{eq:Young for infty});
indeed taking $F=\R$ or $F=\Qp$ for $p\equiv3\mod4$, $\phi_{1}=\phi_{2}=x^{2}$
and $\mu_{1}=\mu_{2}$ a uniform measure on some ball around $0$,
one has $\epsilon_{\star}(\nu_{1})=\epsilon_{\star}(\nu_{2})=1$,
so that (\ref{eq:Young for infty})~$=1$. On the other hand $\nu_{1}*\nu_{2}\in\mathcal{M}_{c,\infty}(F)$.

When $F=\C$, or more generally when the codimension of $(\phi_{1}*\phi_{2})^{-1}(0)$
in $F^{n_{1}+n_{2}}$ is $1$, we expect (\ref{eq:Young for infty})
to hold with a strict inequality. 
\end{rem}

\subsubsection{\label{subsec:Upper-bounds-on}Upper bounds on $\epsilon_{\star}$}

For $n>m$, the lower bound (\ref{eq:lowerbd}) may in general not
be an equality. However, the next result shows that when $F=\C$,
(\ref{eq:lowerbd}) is asymptotically sharp as $\operatorname{lct}_{\mathbb{C}}(\mathcal{J}_{\phi};x)\rightarrow0$. 
\begin{thm}
\label{thm:upper-C}Let $X,Y$ be analytic $\C$-manifolds, and let
$\phi:X\to Y$ be a locally dominant analytic map. Then, whenever
$\operatorname{lct}_{\C}(\mathcal{J}_{\phi};x)<1$, 
\begin{equation}
\epsilon_{\star}(\phi;x)\leq\frac{\operatorname{lct}_{\mathbb{C}}(\mathcal{J}_{\phi};x)}{1-\operatorname{lct}_{\mathbb{C}}(\mathcal{J}_{\phi};x)}.\label{eq:upper bound on epsilon}
\end{equation}
\end{thm}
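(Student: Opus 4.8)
The plan is to produce a single smooth measure $\mu$ on a small neighborhood $U$ of $x$ whose pushforward $\phi_*\mu$ fails to lie in $L^{1+\epsilon}$ for $\epsilon > \operatorname{lct}_\C(\mathcal J_\phi;x)/(1-\operatorname{lct}_\C(\mathcal J_\phi;x))$, thereby bounding $\epsilon_\star(\phi;x)$ from above. Write $\lambda = \operatorname{lct}_\C(\mathcal J_\phi;x) < 1$. The key is to compare the density of $\phi_*\mu$ with a negative power of the Jacobian ideal. Concretely, if $g$ denotes the density of $\phi_*\mu$ on $Y$ (locally $Y \subseteq \C^m$), then the coarea/change of variables formula gives, for a test function $h \geq 0$ on $Y$,
\[
\int_Y h(y)\, g(y)\, d\mu_{\C^m}(y) = \int_X (h\circ\phi)(x)\, d\mu(x),
\]
while a Cauchy–Schwarz / Jensen argument applied fiberwise shows that the total mass of $\phi_*\mu$ localized near a value $y$ cannot be spread out more efficiently than the geometry of the fibers allows; this will let me estimate $\int_Y g^{1+\epsilon}$ from below by an integral of the form $\int_X |\Delta(x)|_\C^{-c\epsilon}\, d\mu(x)$ for a suitable $m\times m$ minor $\Delta$ of $d\phi$ and a constant $c = c(\epsilon)$ coming from Hölder's inequality. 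Since $\mathcal J_\phi$ is generated by such minors, divergence of this last integral is controlled by $\operatorname{lct}_\C(\mathcal J_\phi;x)$.

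The technical heart is to get the exponent bookkeeping right. I would run the following chain: by Hölder, for any probability-like localization one has $\left(\int g^{1+\epsilon}\right)^{1/(1+\epsilon)} \cdot (\text{measure of support})^{\epsilon/(1+\epsilon)} \geq \int g$, but more usefully, after resolving singularities of $\mathcal J_\phi$ (Hironaka, available over $\C$ and in the analytic category) one may assume $\phi$ factors through a monomial map, reducing to an explicit computation with monomial densities. In the monomial model, $\mathcal J_\phi$ is a monomial ideal with log-canonical threshold $\lambda$ realized along some exceptional divisor, and the pushforward density behaves like $|t|^{a}$ near that stratum with $a$ determined by the discrepancy and the multiplicity of $\mathcal J_\phi$ there; integrability of $g^{1+\epsilon}$ then fails exactly when $(1+\epsilon)a + (\text{number of relevant variables}) \leq 0$, and unwinding the relation between $a$, the discrepancy, and $\lambda$ yields the threshold $\epsilon = \lambda/(1-\lambda)$. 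This mirrors the one-dimensional formula of Theorem~\ref{thm:one dimensional}, where the same algebraic relation $\epsilon/(1+\epsilon) = \lambda$ appears, and indeed the $m=1$ case of the present theorem should recover it (with $\mathcal J_\phi = (\phi')$).

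The main obstacle I anticipate is the fiberwise spreading estimate: controlling $\int_Y g^{1+\epsilon}$ from below by a Jacobian integral on $X$ requires knowing that the pushforward concentrates on the image of the critical locus in a quantitatively optimal way, and the Jacobian ideal $\mathcal J_\phi$ governs the \emph{minimal} possible local integrability rather than an average one — so one must choose $\mu$ adapted to the divisor computing $\operatorname{lct}_\C(\mathcal J_\phi;x)$, e.g. with density concentrated transversally to that divisor, and verify that no cancellation or spreading in the $(n-m)$ fiber directions improves the exponent. I would handle this by working in the resolution, where the minor $\Delta$ and the relative canonical divisor are simultaneously monomialized, so the fiber integration becomes an explicit product of one-variable integrals and the worst-case exponent is transparent; the restriction to $F = \C$ enters precisely because there the bad case is the \emph{densest} (complex codimension one in $Y$), and a real bad fiber would only push the threshold higher, which is why the clean inequality is claimed over $\C$ only. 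A secondary point to check is uniformity of the construction over a neighborhood $U \ni x$ shrinking, so that the $\sup_U$ in the definition of $\epsilon_\star(\phi;x)$ does not rescue integrability; this follows since $\operatorname{lct}_\C(\mathcal J_\phi;x)$ is already a germ invariant and the resolution can be taken over an arbitrarily small $U$.
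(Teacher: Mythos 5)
Your overall framework is reasonable and matches the paper's at a coarse level — use the coarea formula to express $\int_Y g^{1+\epsilon}$ as an integral over $X$ involving an $\epsilon$-power of the fiber integral, resolve $\mathcal{J}_\phi$, and do exponent bookkeeping in the monomial coordinates — but the crucial step is both misidentified and handled incorrectly.

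The gap is in your treatment of the fiber integral. You claim that after the log-principalization $\pi$ one may assume $\phi$ factors through a monomial map and that ``the fiber integration becomes an explicit product of one-variable integrals.'' That is false: Hironaka's theorem monomializes the ideal $\mathcal{J}_\phi$ and the Jacobian of $\pi$, but it does \emph{not} monomialize the map $\phi$ itself (or $\widetilde{\phi} = \phi\circ\pi$). The fibers $\widetilde{\phi}^{-1}(\widetilde{\phi}(\widetilde{x}))$ remain general $(n-m)$-dimensional complex analytic sets, and nothing about the resolution turns the inner integral over those fibers into a product of one-variable integrals. The genuine difficulty, which you flag but do not resolve, is to bound from below the Hausdorff measure of the fiber intersected with an appropriately scaled polydisc $Q(\widetilde{x})$. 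The paper does this via Lelong's monotonicity theorem: for a pure $(n-m)$-dimensional analytic set $M$ through a point, the ratio $H_{2(n-m)}(M\cap B_\rho)/\rho^{2(n-m)}$ is nondecreasing in $\rho$ and bounded below by the (positive) Lelong number, giving $H_{2(n-m)}(M\cap B_r)\gtrsim r^{2(n-m)}$ uniformly. Combined with the Cauchy--Binet formula to express the coarea Jacobian $\det((d\phi)^*(d\phi))$ via the minors $M_I$, this yields the needed pointwise lower bound $\int_{\widetilde{\phi}^{-1}(\widetilde{\phi}(\widetilde{x}))}|\det d_t\pi|_\C\, dH/G(\pi(t)) \gtrsim \prod_i |\widetilde{x}_i|_\C^{b_i-a_i+1}$, from which the monomial exponent bookkeeping proceeds as you anticipated.

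Relatedly, your stated reason for the restriction to $F=\C$ — that ``the bad case is the densest'' over $\C$ — is not the actual reason and does not give a mechanism for the argument. The reason $\C$ is essential is precisely Lelong's theorem: the monotonicity of the normalized Hausdorff volume of analytic sets in balls is a complex-analytic phenomenon (related to plurisubharmonicity) and fails over $\R$ and over $\Q_p$, which is why the paper explicitly records after the theorem that the proof does not extend to other local fields. Without Lelong or a substitute, your chain of inequalities has no way to prevent the fiber from having vanishingly small volume near the relevant exceptional divisor, and the lower bound on $\int_Y g^{1+\epsilon}$ collapses.
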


\begin{rem}
\label{rem:discussion on upper bound}\, 
\begin{enumerate}
\item When $Y=F$, we have $\operatorname{lct}_{F}(\phi-\phi(x);x)\leq\operatorname{lct}_{F}(\mathcal{J}_{\phi};x)$
(see Proposition \ref{prop:Lbounds agree with formula}), so that
(\ref{eq:upper bound on epsilon}) holds also for $F\neq\C$, whenever
$\operatorname{lct}_{F}(\mathcal{J}_{\phi};x)<1$. In particular,
Theorem \ref{thm:one dimensional} implies Theorem \ref{thm:upper-C}
in the case that $Y=\C$. 
\item The upper bound (\ref{eq:upper bound on epsilon}) is asymptotically
tight, in the sense that the value of $\epsilon_{\star}$ can be arbitrarily
close to the upper bound (\ref{eq:upper bound on epsilon}), as seen
from the following family of examples. Let $\phi:=x_{1}^{m}x_{2}^{m}...x_{n}^{m}$.
Then $\nabla\phi=\langle x_{1}^{m-1}x_{2}^{m}...x_{n}^{m},...,x_{1}^{m}x_{2}^{m}...x_{n}^{m-1}\rangle$,
and thus by \cite[Main Theorem and Example 5]{How01}, it follows
that 
\[
\operatorname{lct}_{\C}(\mathcal{J}_{\phi};0)=\frac{1}{m-\frac{1}{n}},
\]
so that the upper bound in (\ref{eq:upper bound on epsilon}) becomes
$\epsilon_{\star}(\phi;0)\leq\frac{1}{m-1-\frac{1}{n}}$, whereas
the lower bound (\ref{eq:lowerbd}) is $\epsilon_{\star}(\phi;0)\geq\frac{1}{m-\frac{1}{n}}$.
We see that the true value $\epsilon_{\star}(\phi;0)=\frac{1}{m-1}$
is closer to the upper bound than to the lower bound. 
\end{enumerate}
\end{rem}

One may wonder whether Theorem \ref{thm:upper-C} can be extended
to $F\neq\C$. In the current proof, the volumes of balls in complex
manifolds are bounded from below using Lelong's monotonicity theorem,
and the latter fails for $F=\R$ and for any $\Q_{p}$. If $\varphi:X\to Y$
is a polynomial map between smooth varieties, defined over $\Q$,
we expect the upper bound in Theorem \ref{thm:upper-C} to hold for
$\varphi_{\Q_{p}}:X(\Q_{p})\to Y(\Q_{p})$ for infinitely many primes
$p$. This would follow from a positive answer to Question \ref{que:Archimedean to non-Archimedean}.

\subsubsection{Applications to convolutions of algebraic morphisms}

Throughout this and the next subsections we assume $K$ is a number
field. In \cite{GH19,GH21} and \cite{GHb}, the first two authors
have studied the following convolution operation in algebraic geometry: 
\begin{defn}
\label{def:convolution}Let $\varphi:X\rightarrow G$ and $\psi:Y\rightarrow G$
be morphisms from algebraic $K$-varieties $X,Y$ to an algebraic
$K$-group $G$. We define their \emph{convolution} by 
\[
\varphi*\psi:X\times Y\rightarrow G,\text{ }(x,y)\mapsto\varphi(x)\cdot_{G}\psi(y).
\]
We denote by $\varphi^{*k}:X^{k}\rightarrow G$ the \emph{$k$-th
self convolution} of $\varphi$. 
\end{defn}

We restrict ourselves to the setting where $X,Y$ are smooth algebraic
$K$-varieties and $G$ is a connected algebraic $K$-group. The main
motto is that the algebraic convolution operation has a smoothing
effect on morphisms, similarly to the usual convolution operation
in analysis (see \cite[Proposition 1.3]{GH19} and \cite[Proposition 3.1]{GH21}).
For example, starting from any dominant map $\varphi:X\rightarrow G$,
the $k$-th self convolution $\varphi^{*k}:X^{k}\rightarrow G$ is
flat for every $k\geq\dim G$ (\cite[Theorem B]{GH21}). To explain
the connection to this work, we introduce the following property: 
\begin{defn}[{{{{\cite[Definition II]{AA16}}}}}]
\label{def:(FRS)}~ 
\begin{enumerate}
\item A $K$-variety $Z$ has \textit{rational singularities} if it is normal
and for every resolution of singularities $\pi:\widetilde{Z}\rightarrow Z$,
the pushforward $\pi_{*}(\mathcal{O}_{\widetilde{Z}})$ of the structure
sheaf has no higher cohomologies. 
\item A morphism $\varphi:X\rightarrow Y$ between smooth $K$-varieties
is called \textit{(FRS)} if it is flat and if every fiber of $\varphi$
has rational singularities. 
\end{enumerate}
In \cite[Theorem 3.4]{AA16} (see Theorem \ref{thm:analytic criterion of the (FRS) property}
below), Aizenbud and Avni proved the following. A morphism $\varphi:X\rightarrow Y$
between smooth $K$-varieties, is (FRS) if and only if for every non-Archimedean
local field $F\supseteq K$, one has $\left(\varphi_{F}\right)_{*}\mu\in\mathcal{M}_{c,\infty}(Y(F))$
for every $\mu\in\mathcal{M}_{c}^{\infty}(X(F))$. A similar characterization
can be given for $F=\C$, see Corollary \ref{cor:characterization of Linfty}.

This characterization of $L^{\infty}$-maps allows one to study random
walks on analytic groups as in $\mathsection$\ref{subsec:Application:-regularization-by}
in an algebro-geometric way, via the above algebraic convolution operation.
Starting from a pushforward $\nu=\left(\varphi_{F}\right)_{*}\mu$
of $\mu\in\mathcal{M}_{c,\infty}(X(F))$ by an algebraic map $\varphi:X\to G$,
instead of showing that $\nu^{*k}\in\mathcal{M}_{c,\infty}(G(F))$,
it is enough to show that $\varphi^{*k}:X^{k}\rightarrow G$ is an
(FRS) morphism. This method was used in \cite{AA16,GHb} to study
word maps. Moreover, in \cite{GH19,GH21} it was shown that any locally
dominant morphism $\varphi:X\rightarrow G$ becomes (FRS) after sufficiently
many self-convolutions.

Using (\ref{eq:via-young}), Theorem \ref{thm:lowerbd} and Corollary
\ref{cor:characterization of Linfty}, explicit bounds can be given
on the required number of self convolutions, in terms of the Jacobian
ideal of $\varphi$. 
\end{defn}

\begin{cor}
\label{cor:applications for convolution}Let $X$ be a smooth $K$-variety,
$G$ be a connected $K$-algebraic group and let $\varphi:X\to G$
be a locally dominant morphism. Then $\varphi^{*k}$ is (FRS) for
any $k\geq\left\lfloor \frac{1+\operatorname{lct}_{\C}(\mathcal{J}_{\varphi})}{\operatorname{lct}_{\C}(\mathcal{J}_{\varphi})}\right\rfloor +1$. 
\end{cor}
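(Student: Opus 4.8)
The plan is to combine three ingredients already available: the lower bound of Theorem \ref{thm:lowerbd}, the characterization of $L^{\infty}$-maps in Corollary \ref{cor:characterization of Linfty}, and the Young-inequality estimate \eqref{eq:via-young}, together with a behavior-under-base-change/self-convolution statement for the Jacobian ideal. First I would reduce to a statement about a single pushforward measure: by Corollary \ref{cor:characterization of Linfty}, the morphism $\varphi^{*k}$ is (FRS) if and only if $(\varphi^{*k}_{F})_{*}\mu \in \mathcal{M}^{\infty}_{c,\infty}(G(F))$ for every $\mu \in \mathcal{M}^{\infty}_c(X^k(F))$ and every local field $F \supseteq K$ of the appropriate type; equivalently, $\epsilon_{\star}(\varphi^{*k}_F) = \infty$ for all such $F$. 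So it suffices to show that $\epsilon_{\star}(\varphi^{*k}_F;z) = \infty$ for every point $z \in X^k(F)$ once $k$ is large enough.

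The key computation is the relation between $\epsilon_{\star}(\varphi^{*k})$ and $\epsilon_{\star}(\varphi)$ via \eqref{eq:via-young}. Writing $\varphi^{*k} = \varphi * \varphi^{*(k-1)}$ and viewing the pushforward $\nu = (\varphi^{*k}_F)_*\mu$ of a product measure $\mu = \mu_1 \otimes \cdots \otimes \mu_k$ as an iterated convolution $\nu_1 * \cdots * \nu_k$ on the group $G(F)$ with $\nu_i = (\varphi_F)_*\mu_i$, Young's convolution inequality (the $k$-fold version quoted before \eqref{eq:via-young}) gives that $\nu \in \mathcal{M}_{c,\infty}(G(F))$ as soon as $k \geq \lfloor (1+\epsilon)/\epsilon \rfloor + 1$, where $\epsilon = \min_i \epsilon_\star(\nu_i)$; and since a general $\mu \in \mathcal{M}^{\infty}_c(X^k(F))$ is dominated by a finite sum of product measures, the same threshold works in general. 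Now Theorem \ref{thm:lowerbd} applied to $\varphi_F : X(F) \to G(F)$ gives $\epsilon_\star(\varphi_F; x) \geq \operatorname{lct}_F(\mathcal{J}_{\varphi}; x)$ for every $x$, hence $\epsilon_\star(\varphi_F) \geq \operatorname{lct}_F(\mathcal{J}_\varphi)$. Thus it is enough to take $k \geq \lfloor (1 + \operatorname{lct}_F(\mathcal{J}_\varphi))/\operatorname{lct}_F(\mathcal{J}_\varphi)\rfloor + 1$, using that $t \mapsto (1+t)/t$ is decreasing so this bound is monotone in the lct.

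The final step is to remove the dependence of the bound on the chosen local field $F$, replacing $\operatorname{lct}_F(\mathcal{J}_\varphi)$ by $\operatorname{lct}_\C(\mathcal{J}_\varphi)$. Here I would invoke the standard fact that the log-canonical threshold of an ideal defined over a number field $K$ is, for almost all places, at least its complex value — more precisely $\operatorname{lct}_F(\mathcal{J}_\varphi) \geq \operatorname{lct}_\C(\mathcal{J}_\varphi)$ for $F = \mathbb{R}$ and for $F = \Qp$ with $p$ outside a finite set, with a uniform statement across all finite extensions (this is the inequality direction of the Archimedean–non-Archimedean comparison for lct, and is precisely the direction that does \emph{not} require the delicate Question~\ref{que:Archimedean to non-Archimedean}). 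Since $(1+t)/t$ is decreasing, $k \geq \lfloor (1+\operatorname{lct}_\C(\mathcal{J}_\varphi))/\operatorname{lct}_\C(\mathcal{J}_\varphi)\rfloor + 1$ then forces $k \geq \lfloor(1+\epsilon_\star(\varphi_F))/\epsilon_\star(\varphi_F)\rfloor + 1$ for every relevant $F$, so $\varphi^{*k}_F$ is an $L^\infty$-map for all such $F$, and Corollary \ref{cor:characterization of Linfty} concludes that $\varphi^{*k}$ is (FRS).

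The main obstacle I anticipate is the passage from the analytic estimate, which is about pushforwards of \emph{smooth compactly supported} measures on a fixed $X^k(F)$, to the \emph{algebraic} (FRS) property, which must hold uniformly; this is exactly what Corollary \ref{cor:characterization of Linfty} (the $F=\C$ analogue of Aizenbud–Avni) is designed to bridge, so one must check that its hypotheses apply to $\varphi^{*k} : X^k \to G$ — in particular that $X^k$ is smooth (clear) and that $\varphi^{*k}$ is locally dominant (clear, since $\varphi$ is). A secondary technical point is making the ``finite sum of product measures'' reduction rigorous for the infimum defining $\epsilon_\star(\varphi^{*k}_F)$, i.e. checking that it suffices to bound $\epsilon_\star$ on a spanning family of measures; this follows from subadditivity of the relevant $L^{1+\epsilon}$ quasinorms but should be spelled out.
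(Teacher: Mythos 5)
Your outline is correct in substance and uses the same three ingredients the paper indicates (Theorem~\ref{thm:lowerbd}, the Young bound~\eqref{eq:via-young}, and Corollary~\ref{cor:characterization of Linfty}), but your final step is an unnecessary detour. You invoke the comparison $\operatorname{lct}_F(\mathcal{J}_\varphi)\geq\operatorname{lct}_\C(\mathcal{J}_\varphi)$ for $F=\R$ and almost all $p$-adic fields in order to verify the $L^\infty$-property over every relevant local field. But Corollary~\ref{cor:characterization of Linfty} makes items (1) and (4) equivalent: $\varphi^{*k}$ is (FRS) \emph{if and only if} $\varphi^{*k}_\C$ is an $L^\infty$-map. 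So one should simply run the whole argument with $F=\C$: Theorem~\ref{thm:lowerbd} gives $\epsilon_\star(\varphi_\C)\geq\operatorname{lct}_\C(\mathcal{J}_\varphi)$ directly, and the product-measure plus Young argument shows $\varphi^{*k}_\C$ is $L^\infty$ for $k\geq\lfloor(1+\operatorname{lct}_\C(\mathcal{J}_\varphi))/\operatorname{lct}_\C(\mathcal{J}_\varphi)\rfloor+1$ (using that $t\mapsto\lfloor(1+t)/t\rfloor+1$ is non-increasing). That already yields (FRS); the cross-field lct comparison, while true in the direction you state, is not needed and only adds a claim that would itself require justification.

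Two minor points on the reduction step. First, the phrase ``dominated by a finite sum of product measures'' is not quite the right formulation: a general $\mu\in\mathcal{M}_c^\infty(X^k(F))$ need not be a sum of product measures, but its density is bounded, and its support sits inside a finite union of products of compact chart domains, so after a partition of unity $\mu$ is dominated by a finite sum of measures of the form $C\,\mu_1\otimes\cdots\otimes\mu_k$ with each $\mu_i\in\mathcal{M}_c^\infty(X(F))$; monotonicity of pushforward and of the $L^q$-norm then reduces to the product case as you say, but state it as domination, not decomposition. Second, you should note explicitly that the identity $(\varphi^{*k}_F)_*(\mu_1\otimes\cdots\otimes\mu_k)=(\varphi_F)_*\mu_1*\cdots*(\varphi_F)_*\mu_k$ on $G(F)$ is what converts the geometric convolution $\varphi^{*k}$ into a measure convolution, so that \eqref{eq:via-young} applies; this is the pivot of the whole argument and deserves to be written out, even if it follows immediately from Definition~\ref{def:convolution} and Fubini.
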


\begin{rem}
\label{rem:Young for non-unimodular}In the setting of locally compact
groups, Young's convolution inequality is commonly stated under the
assumption that the group $G$ is unimodular. In \cite[Lemma 2.1 and Corollary 2.3]{KR78}
a version for non-unimodular groups is given; if $G$ is a locally
compact group, with modular character $\triangle:G\rightarrow\R_{>0}$,
and if $1\leq p,q,r\leq\infty$ satisfy $\frac{1}{p}+\frac{1}{q}=1+\frac{1}{r}$,
then we have $\left\Vert \mu*(\triangle^{1-\frac{1}{p}}\nu')\right\Vert _{r}\leq\left\Vert \mu\right\Vert _{p}\left\Vert \nu'\right\Vert _{q}$
whenever $\mu\in\mathcal{M}_{c,p}(G)$ and $\nu'\in\mathcal{M}_{c,q}(G)$.
However, since the modular character $\triangle:G\rightarrow\R_{>0}$
is a continuous homomorphism, it bounded on the compact support of
$\nu'$. Hence, for every $\mu\in\mathcal{M}_{c,p}(G)$ and $\nu\in\mathcal{M}_{c,q}(G)$,
we deduce that
\[
\left\Vert \mu*\nu\right\Vert _{r}=\left\Vert \mu*\left(\triangle^{1-\frac{1}{p}}(\triangle^{\frac{1}{p}-1}\cdot\nu)\right)\right\Vert _{r}\leq\left\Vert \mu\right\Vert _{p}\cdot\left\Vert \triangle^{\frac{1}{p}-1}\cdot\nu\right\Vert _{q}\leq C\cdot\left\Vert \mu\right\Vert _{p}\cdot\left\Vert \nu\right\Vert _{q},
\]
for some constant $C$ depending on $G$ and $\nu$. In particular,
$\mu*\nu\in\mathcal{M}_{c,r}(G)$. 
\end{rem}

\subsubsection{An algebraic characterization of $\epsilon_{\star}=\infty$}

Let $\varphi:X\rightarrow Y$ be a morphism between smooth $K$-varieties.
We would like to characterize the condition that $\epsilon_{\star}(\varphi_{F})=\infty$
for all $F$ in certain families of local fields, in terms of the
singularities of $\varphi$. The singularity properties we consider
play a central role in birational geometry (see \cite{Kol97}).

Let $X$ be a normal $K$-variety, and let $\omega\in\Omega^{\mathrm{top}}(X_{\mathrm{sm}})$
be a rational top form on the smooth locus $X_{\mathrm{sm}}$ of $X$.
The zeros and poles of $\omega$ give rise to a divisor $\operatorname{div}(\omega)$
on $X$. Let $\pi:\widetilde{X}\rightarrow X$ be a resolution of
singularities, namely, a proper morphism from a smooth variety $\widetilde{X}$,
which is an isomorphism over $X_{\mathrm{sm}}$. Then $\pi^{*}\omega$
defines a unique rational top form on $\widetilde{X}$. Moreover,
when $X$ is nice enough (e.g. if $X$ is a local complete intersection),
$\operatorname{div}(\omega)$ is $\Q$-Cartier, and we can define
its pullback $\pi^{*}\operatorname{div}(\omega)$. The divisor $K_{\widetilde{X}/X}:=\operatorname{div}(\pi^{*}\omega)-\pi^{*}\operatorname{div}(\omega)$
on $\widetilde{X}$ is called the \emph{relative canonical divisor},
and one can verify that it does not depend on the choice of $\omega$.
$K_{\widetilde{X}/X}$ can be written as $K_{\widetilde{X}/X}=\sum_{i=1}^{M}a_{i}E_{i}$,
for some prime divisors $E_{i}$, $a_{i}\in\Q$. We say that $X$
has \emph{canonical singularities} (resp. \emph{log-canonical singularities}),
if $a_{i}\geq0$ (resp.\ $a_{i}\geq-1$) for all $1\leq i\leq M$.
When $X$ is a local complete intersection (e.g.\ a fiber of a flat
morphism between smooth schemes), canonical singularities are equivalent
to rational singularities. Let us give an example: 
\begin{example}
\label{exa:canonical sing}~Let $X\subseteq\mathbb{A}_{\C}^{n}$
be the variety defined by $\sum_{i=1}^{n}x_{i}^{d_{i}}=0$, with $n\geq3$.
Then $X$ has canonical singularities if and only if $\sum_{i=1}^{n}\frac{1}{d_{i}}>1$,
and log-canonical singularities if and only if $\sum_{i=1}^{n}\frac{1}{d_{i}}\geq1$. 
\end{example}

As seen from Example \ref{exa:canonical sing}, log-canonical singularities
are very close to being canonical, so one could suspect being flat
with fibers of log-canonical singularities is equivalent to $\epsilon_{\star}(\varphi_{\Q_{p}})=\infty$,
that is to being almost in $L^{\infty}$. Unfortunately, the normality
hypothesis required for log-canonical singularities turns out to be
too strong. For example, the map $\varphi(x,y)=xy$ satisfies $\epsilon_{\star}(\varphi_{\Q_{p}})=\infty$
for all $p$, but the fiber over $0$ is not normal (see $\mathsection$\ref{sec:Geometric-characterization-of},
after Corollary \ref{cor:characterization of Linfty}). This technical
issue can be resolved by considering the slightly weaker notion of
\emph{semi-log-canonical singularities}, which is an analogue of log-canonical
singularities for demi-normal schemes (see \cite[Section 4]{KSB88}).
Indeed, the variety $\left\{ xy=0\right\} $ is demi-normal and has
semi-log-canonical singularities. We can now state the main result
of this section: 
\begin{thm}
\label{thm:characterization of Lq for all q-intro}Let $\varphi:X\rightarrow Y$
be a map between smooth $K$-varieties. Then the following are equivalent: 
\begin{enumerate}
\item $\varphi$ is flat with fibers of semi-log-canonical singularities. 
\item For every local field $F$ containing $K$, we have $\epsilon_{\star}(\varphi_{F})=\infty$,
that is, for every $\mu\in\mathcal{M}_{c}^{\infty}(X(F))$, the measure
$\varphi_{*}\mu$ lies in $\mathcal{M}_{c,q}(Y(F))$ for all $1<q<\infty$.
\item For every large enough prime $p$, such that $\Qp\supseteq K$, we
have $\epsilon_{\star}(\varphi_{\Q_{p}})=\infty$. 
\item We have $\epsilon_{\star}(\varphi_{\C})=\infty$. 
\end{enumerate}
\end{thm}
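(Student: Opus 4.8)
The plan is to prove the cycle of implications $(1)\Rightarrow(2)\Rightarrow(3)\Rightarrow(4)\Rightarrow(1)$, reducing the question to the already-known characterization of $L^\infty$-maps (Aizenbud--Avni, stated here as Theorem \ref{thm:analytic criterion of the (FRS) property} for the non-Archimedean case, and Corollary \ref{cor:characterization of Linfty} for $F=\C$) together with a resolution-of-singularities computation of $\epsilon_\star$ in the demi-normal setting. The implications $(2)\Rightarrow(3)$ and $(2)\Rightarrow(4)$ are immediate, so the content is in $(1)\Rightarrow(2)$, $(3)\Rightarrow(1)$ and $(4)\Rightarrow(1)$.

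For $(1)\Rightarrow(2)$, I would argue that semi-log-canonical fibers are ``one resolution away'' from rational (=canonical, in the lci case) ones. Working locally near a point $x\in X$ with $y=\varphi(x)$, flatness means the fiber $X_y$ is a local complete intersection of the expected dimension $n-m$. Choose local analytic coordinates so that $m-1$ of the coordinate functions $t_1,\dots,t_{m-1}$ of $\varphi$ are a regular system of parameters on the smooth part, i.e.\ reduce to the situation where $Y$ is effectively one-dimensional fibered over a smooth base; more precisely, slice by a generic $(n-m+1)$-dimensional subvariety through $x$ transverse to the first $m-1$ coordinates, so that $\varphi$ becomes a map to a curve. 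Then Theorem \ref{thm:one dimensional} applies: $\epsilon_\star(\varphi;x)=\infty$ iff $\operatorname{lct}_F(\varphi-\varphi(x);x)\ge 1$, and the semi-log-canonical condition on the fiber is exactly what guarantees $\operatorname{lct}\ge 1$ after passing to the normalization / demi-normalization and running an embedded resolution (one uses that scc singularities on a demi-normal lci scheme pull back to log-canonical discrepancies $a_i\ge -1$, which translates to the integrability of $|\varphi-\varphi(x)|_F^{-s}$ for all $s<1$ via the change-of-variables formula along the resolution). I should be careful that slicing preserves the scc property of fibers — this is a Bertini-type statement for semi-log-canonical singularities, which holds for general hyperplane sections. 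Assembling the one-dimensional computation over the sliced family and integrating out the remaining $m-1$ smooth directions (where $\varphi$ is a submersion and contributes nothing) gives $\varphi_*\mu\in\bigcap_{q>1}\mathcal M_{c,q}(Y(F))$ for all local fields $F\supseteq K$.

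For the converse directions $(3)\Rightarrow(1)$ and $(4)\Rightarrow(1)$, I would argue contrapositively. Suppose $\varphi$ is not flat with scc fibers. If $\varphi$ fails to be flat, then by generic flatness the non-flat locus is a proper closed subset, and over a point $x$ in it the fiber has dimension $>n-m$, or $\varphi$ is not even dominant near $x$; in either case one produces $\mu$ supported near $x$ for which $\varphi_*\mu\notin L^{1+\epsilon}$ for any $\epsilon>0$ — e.g.\ by exhibiting a positive-codimension-one locus in $Y$ where the density of $\varphi_*\mu$ blows up like $|y-y_0|^{-1}$, using that the generic fiber dimension jumps. If $\varphi$ is flat but some fiber $X_y$ is not scc, then $X_y$ is lci but the demi-normalization has a discrepancy $a_i<-1$ in an embedded resolution; by the same one-dimensional reduction and the change-of-variables formula one finds that $\operatorname{lct}_F(\varphi-y;x)<1$ for $x$ over the bad divisor, hence by Theorem \ref{thm:one dimensional} $\epsilon_\star(\varphi;x)<\infty$, i.e.\ $\epsilon_\star(\varphi_F)<\infty$. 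The point that this works for \emph{all large $p$} (resp.\ for $F=\C$) is that the relevant resolution and discrepancy data are defined over $K$ and have good reduction mod $p$ for all but finitely many $p$, so the $p$-adic integrals and the complex integral see the same exponents; this is where one invokes the standard ``$\mathrm{lct}$ is preserved under reduction mod $p\gg 0$'' principle (and its analytic incarnation via uniform embedded resolution).

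The main obstacle I anticipate is the careful handling of the demi-normal / non-normal situation: the cleanest invariants (discrepancies, log-canonical thresholds) are phrased for normal varieties, and the whole point of using \emph{semi}-log-canonical rather than log-canonical is that fibers like $\{xy=0\}$ are not normal. So I would need to set up the bookkeeping on the demi-normalization $\nu\colon X_y^{\nu}\to X_y$ — controlling the conductor divisor and checking that the integrability of $|\varphi-y|_F^{-s}$ upstairs on $X$ near $x$ matches the scc condition downstairs on $X_y$ — and verify that the slicing argument in $(1)\Rightarrow(2)$ is compatible with this, i.e.\ that a general hyperplane section of a demi-normal scheme with scc singularities is again demi-normal with scc singularities. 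Once this dictionary between ``scc fiber'' and ``$\operatorname{lct}_F(\varphi-\varphi(x);x)\ge 1$ along a generic curve-slice'' is in place, Theorems \ref{thm:one dimensional} and \ref{thm:analytic criterion of the (FRS) property}/Corollary \ref{cor:characterization of Linfty} do the rest, and the reduction-mod-$p$ uniformity is standard.
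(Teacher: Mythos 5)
Your overall cycle $(1)\Rightarrow(2)\Rightarrow(3)/(4)\Rightarrow(1)$ matches the paper's architecture, but the core of your argument --- reducing the $m$-dimensional target to the one-dimensional case by slicing, and then invoking Theorem~\ref{thm:one dimensional} --- is where the proof breaks down, and this is not how the paper proceeds.

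The difficulty with slicing is twofold. First, the one-dimensional formula $\epsilon_{\star}(\phi;x)=\operatorname{lct}_F(\phi_x;x)/(1-\operatorname{lct}_F(\phi_x;x))$ simply does not survive the passage to higher-dimensional targets; the paper devotes $\S$\ref{subsec:Higher-dimensions} to showing that the tight relations between $\epsilon_\star$, $\operatorname{lct}_F$, $\delta_\star$ and $k_\star$ fall apart for $m\geq 2$ (e.g.\ Example~\ref{exa:no connection when m>1}, where $\epsilon_\star=\frac{1}{dm-1}$ while $\operatorname{lct}_{\C}(\mathcal{J}_\phi)=\frac1d$). Second, even granting a Bertini-type statement that generic slices of scc fibers are again scc, the integrability of the density of $\varphi_*\mu$ on $Y(F)$ is not recovered from the integrability of its restriction to one-dimensional slices: the constants in the $L^{1+\epsilon}$ estimate for each slice depend on the $(m-1)$ frozen parameters, and making this dependence uniform enough to integrate back up is exactly the nontrivial uniformity problem that the paper's constructible-function machinery is designed to solve. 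There is no elementary reduction hiding here.

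What actually makes the argument go is the jet-scheme dictionary, which you do not mention at all. The paper first replaces condition~(1) by the condition that $\varphi$ is \emph{jet-flat}, via Lemma~\ref{lemma: jet flat vs (FRS) and (FSLCS)}(2): $\varphi$ is flat with fibers of semi-log-canonical singularities if and only if $J_m(\varphi)$ is flat over $Y\subseteq J_m(Y)$ for all $m$, equivalently (by Musta\c{t}\u{a}'s theorem, Theorem~\ref{thm:log canonical threshold and jet schemes}) $\operatorname{lct}(X,X_{\varphi(x),\varphi})\geq \dim Y$ for all $x$. This is the translation of scc into an integrability-adjacent statement, and it is intrinsically an $m$-dimensional statement --- it cannot be reduced to a family of $1$-dimensional lcts. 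With that dictionary in hand, $(3)\Rightarrow(1)$ and $(4)\Rightarrow(1)$ are proved by showing that $\varphi*\psi$ is (FRS) for every dominant $\psi:\mathbb{A}^m\to\mathbb{A}^m$ (via Young's inequality and Corollary~\ref{cor:characterization of Linfty}), and then, assuming $\varphi$ not jet-flat, producing a specific power map $\psi_N(y)=(y_1^{2N},\ldots,y_m^{2N})$ and a jet level $k_0$ at which $J_{k_0}(\varphi)*J_{k_0}(\psi_N)$ fails to be flat, contradicting (FRS) of $\varphi*\psi_N$. Your contrapositive sketch (``the density blows up like $|y-y_0|^{-1}$'', ``by the same one-dimensional reduction'') does not supply this.

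For $(1)\Rightarrow(2)$ the paper does not touch resolutions of the fiber or slicing at all. It shows (Propositions~\ref{Proposition:logarithmic explosion} and~\ref{Prop:logarithmic explosion-Archimedean}) that jet-flatness forces the averaged density $G_{F,\mu}(y,k)$ to grow at most polylogarithmically in $k$, by contradiction: a power-law blowup would, after convolving with $\psi_R$ for $R\gg0$, violate the $L^\infty$ bound guaranteed by (FRS) of $\varphi*\psi_R$. This uses the structure theory of motivic exponential functions [CGH18] over $p$-adic fields (approximation of suprema by motivic functions) and of real constructible functions [CM11,CM12] over $\R$; one then integrates the polylogarithmic bound against a definable/subanalytic constancy radius to obtain membership in every $L^q$. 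So the uniformity you would need for your slicing argument is obtained in the paper via constructible-function theory, not via Bertini. Your ``standard reduction mod $p\gg0$'' remark is also not needed once one works with jet-flatness, since that is a field-independent algebro-geometric condition and the motivic framework already handles the uniformity in $F$.

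In short: the plan is sound at the level of which implications to prove, but the two substantive implications are attempted with a tool (one-dimensional slicing) that does not apply, while the two tools that actually carry the proof --- the jet-scheme characterization of scc (Lemma~\ref{lemma: jet flat vs (FRS) and (FSLCS)}) and the structure theory of constructible/motivic functions --- are absent.
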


We prove Theorem \ref{thm:characterization of Lq for all q-intro}
by showing the implications $(1)\Rightarrow(2)\Rightarrow(3)\Rightarrow(1)$
and $(1)\Rightarrow(2)\Rightarrow(4)\Rightarrow(1)$. The implications
$(2)\Rightarrow(3)$ and $(2)\Rightarrow(4)$ are immediate. In the
proof of $(3)\Rightarrow(1)$ and $(4)\Rightarrow(1)$, we reduce
to the case $Y=\mathbb{A}^{n}$, and show that $\varphi:X\rightarrow Y$
satisfies that $\varphi*\psi$ is (FRS) for every dominant map $\psi:Y'\rightarrow Y$.
By analyzing the jets of $\varphi$, and using a jet-scheme interpretation
of semi-log canonical singularities (Lemma \ref{lemma: jet flat vs (FRS) and (FSLCS)}),
we deduce Item (1). The proof also uses the Archimedean counterpart
of \cite[Theorem 3.4]{AA16}, which is stated in Corollary \ref{cor:characterization of Linfty}.

In the proof of $(1)\Rightarrow(2)$, we reduce to showing Item (2)
for constructible measures, and utilize their structure theory, namely,
we use \cite{CM11,CM12} for the Archimedean case, and \cite{CGH18}
for the non-Archimedean case.

\subsection{\label{subsec:Future-directions-and}Future directions and further
applications }

\subsubsection{\label{subsec:invariant of singularities}$\epsilon_{\star}$ as
an invariant of singularities}

Similarly to the analytic definition of $\epsilon_{\star}(\phi)$
in (\ref{eq:def-eps-star}), one can also define an algebro-geometric
invariant. Let $\operatorname{Loc}_{0}$ be the collection of all
non-Archimedean local fields $F$ of characteristic $0$. 
\begin{defn}
\label{def:epsilon-algebro geometric}Let $\varphi:X\rightarrow Y$
be a morphism between smooth $\Q$-varieties. We define $\epsilon_{\star}(\varphi):=\min\limits _{F\in\operatorname{Loc}_{0}}\epsilon_{\star}(\varphi_{F})$,
and $\epsilon_{\star}(\varphi;x)=\sup\limits _{U\ni x}\,\epsilon_{\star}(\varphi|_{U})$,
where $U$ varies over all Zariski open neighborhoods of $x\in X(\overline{\Q})$. 
\end{defn}

It is a consequence of Theorem \ref{thm:lowerbd} that $\epsilon_{\star}(\varphi)>0$
for any $\varphi$, and it essentially follows from \cite{GH21} that
$\epsilon_{\star}(\varphi)\in\Q$. We further expect $\epsilon_{\star}(\varphi;x)$
to have a purely algebro-geometric formula, and to have a good behavior
in families, which means the following. Suppose that $\varphi:\widetilde{X}\rightarrow\widetilde{Y}$
is a morphism over $\mathbb{A}_{\C}^{1}$, where $\pi_{1}:\widetilde{X}\rightarrow\mathbb{A}_{\C}^{1}$
and $\pi_{2}:\widetilde{Y}\rightarrow\mathbb{A}_{\C}^{1}$ are smooth
morphisms. This gives a family $\left\{ \varphi_{t}:\widetilde{X}_{t}\rightarrow\widetilde{Y}_{t}\right\} _{t\in\mathbb{A}_{\C}^{1}}$
of morphisms between smooth varieties. It follows from \cite{Var83},
that the function $x\mapsto\operatorname{lct}(\varphi_{\pi_{1}(x)};x)$
is lower semicontinuous. By Theorem \ref{thm:one dimensional}, $x\mapsto\epsilon_{\star}(\varphi_{\pi_{1}(x)};x)$
is lower semicontinuous as well, if $\pi_{2}:\widetilde{Y}\rightarrow\mathbb{A}^{1}$
has fibers of dimension $1$. We expect $x\mapsto\epsilon_{\star}(\varphi_{\pi_{1}(x)};x)$
to be lower semicontinuous in general. We further expect the following
question to have a positive answer.
\begin{question}
\label{que:Archimedean to non-Archimedean}Let $\varphi:X\rightarrow Y$
be as in Definition \ref{def:epsilon-algebro geometric}. Is it true
that $\epsilon_{\star}(\varphi_{\C})=\epsilon_{\star}(\varphi)$? 
\end{question}

Given a $\Q$-morphism $\varphi:X\rightarrow Y$ as above, one may
further wonder whether the quantity $\epsilon_{\star}(\varphi_{\C};x)=\sup_{U\ni x}\inf_{\mu\in\mathcal{M}_{c,\infty}(U)}\epsilon_{\star}((\varphi_{\C})_{*}\mu)$
in (\ref{eq:defeps-phi}) stays the same when the supremum is taken
over all Zariski open neighborhoods $U$ of $x$ instead of analytic
ones. 

\subsubsection{\label{subsec:invariant of words}$\epsilon_{\star}$ as an invariant
of words}

As discussed in $\mathsection$\ref{subsec:Application:-regularization-by},
one particularly interesting potential application is to the study
of word map on semisimple algebraic groups. In \cite[Theorem A]{GHb}
it was shown that Lie algebra word maps $w_{\mathfrak{g}}:\mathfrak{g}^{r}\rightarrow\mathfrak{g}$,
where $\mathfrak{g}$ is a simple Lie algebra, become (FRS) after
$\sim\operatorname{deg}(w)^{4}$ self convolutions, where $\operatorname{deg}(w)$
is the degree of $w$.
\begin{question}
\label{que:conjecture on (FRS) and flatness}Can we find $\alpha,C>0$
such that for any $w\in F_{r}$ of length $\ell(w)$, and every simple
algebraic group $G$, the word map $w_{G}^{*C\ell(w)^{\alpha}}$ is
(FRS)? 
\end{question}

A potential way to tackle Question \ref{que:conjecture on (FRS) and flatness}
is by studying $\epsilon_{\star}(w_{G})$ in the sense of Definition
\ref{def:epsilon-algebro geometric}; For each $w\in F_{r}$ (resp.
$w\in\mathcal{L}_{r}$), we define $\epsilon_{\star}(w):=\inf_{G}\,\epsilon_{\star}(w_{G})$
(resp. $\epsilon_{\star}(w):=\inf_{\mathfrak{g}}\,\epsilon_{\star}(w_{\mathfrak{g}})$),
where $G$ runs over all simple, simply connected algebraic groups,
and $\mathfrak{g}=\operatorname{Lie}(G)$. We can now ask the following:
\begin{question}
\label{ques:epsilon of words}Can we find for each $l\in\N$, a constant
$\epsilon(l)>0$ such that: 
\begin{enumerate}
\item For every $w\in F_{r}$ of length $\ell(w)=l$, we have $\epsilon_{\star}(w)\geq\epsilon(l)$? 
\item For every $w\in\mathcal{L}_{r}$ of degree $\operatorname{deg}(w)=l$,
we have $\epsilon_{\star}(w)\geq\epsilon(l)$? 
\end{enumerate}
\end{question}

\subsubsection{\label{subsec:representations}$\text{\ensuremath{\epsilon_{\star}}}$
as an invariant of representations}

In a recent work \cite{GGH} of the first two authors with Julia Gordon,
we apply the results and point of view of this paper to the realm
of representation theory, and define and study a new invariant of
representations of reductive groups $G$ over local fields. Harish-Chandra's
regularity theorem says that every character $\Theta(\pi)$ of an
irreducible representation $\pi$ of $G$ is given by a locally $L^{1}$-function.
Since characters are of motivic nature, a variant of \cite[Theorem F]{GH21}
suggests that they should in fact be locally in $L^{1+\epsilon}$,
for some $\epsilon>0$. This gives rise to a new invariant $\epsilon_{\star}(\pi)$,
which is not equivalent to previously known invariants of representations,
such as the Gelfand\textendash Kirillov dimension (see e.g. \cite{Vog78}).
We use a geometric construction and Theorem \ref{thm:lowerbd} to
provide a formula for $\epsilon_{\star}(\pi)$ in terms of the nilpotent
orbits appearing the local character expansion of $\pi$ (see \cite[Theorems A and D]{GGH}).

\subsection{\label{subsec:Conventions}Conventions}
\begin{enumerate}
\item By $\N$ we mean the set $\{0,1,2,...\}$. 
\item We use $K$ (resp. $F$) to denote number (resp. local) fields and
$\mathcal{O}_{K}$ (resp. $\mathcal{O}_{F}$) for their rings of integers. 
\item Given an algebraic map $\varphi:X\rightarrow Y$ between smooth $K$-varieties,
and a local field $F$, we denote by $\varphi_{F}:X(F)\rightarrow Y(F)$
the corresponding $F$-analytic map. We sometimes write $\varphi$
instead of $\varphi_{F}$ if the local field $F$ is clear from the
setting. 
\item If $f,g:X\to\mathbb{R}$ are two functions, we write $f\lesssim g$
if there exists a number $C>0$, possibly depending on $X$, $f$
and $g$, such that $f\leq Cg$. We write $f\sim g$ if $g\lesssim f\lesssim g$. 
\item We write $d_{x}\phi$ for the differential of an analytic map $\phi:X\rightarrow Y$
at $x\in X$, and we denote by $\operatorname{Jac}_{x}\phi:=\det(d_{x}\phi)$
the Jacobian of $\phi$ if $\dim X=\dim Y$. The generalization of
the Jacobian to the case of unequal dimensions is defined in $\S$\ref{sub:main}. 
\item Throughout the paper, we write $\left|\,\cdot\,\right|_{F}$ for the
absolute value on $F$, normalized so that $\mu_{F}(aA)=\left|a\right|_{F}\cdot\mu_{F}(A)$,
for all $a\in F^{\times}$, $A\subseteq F$, and where $\mu_{F}$
is a Haar measure on $F$. Note that $\left|\,\cdot\,\right|_{\C}$
is the square of the usual absolute value on $\C$. 
\item We write $H_{n}$ for the $n$-dimensional Hausdorff measure. We recall
that given a metric space $X$ and a subset $Z\subseteq X$, we define
$H_{n}(Z):=\underset{\delta\rightarrow0}{\sup}H_{n}^{\delta}(Z)$,
where
\[
H_{n}^{\delta}(Z)=\inf\left\{ \sum_{i=1}^{\infty}\alpha(n)\left(\mathrm{diam}(U_{i})\right)^{n}:\bigcup_{i=1}^{\infty}U_{i}\supseteq Z,\mathrm{diam}(U_{i})<\delta\right\} ,
\]
where $\mathrm{diam}(A)$ denotes the diameter of a set $A\subseteq X$
and $\alpha(n):=\frac{(\pi/4)^{n/2}}{\Gamma(\frac{n}{2}+1)}$. The
normalization constant $\alpha(n)$ is chosen such that $H_{n}$ coincides
with the Lebesgue measure in the case of $X=\R^{n}$. 
\item All the measures we consider are non-negative, unless stated otherwise. 
\end{enumerate}
\begin{acknowledgement*}
We thank Nir Avni, Joseph Bernstein, Lev Buhovski, Raf Cluckers and
Stephan Snegirov for useful conversations and correspondences. We
thank David Kazhdan for suggesting that the condition of semi-log-canonical
singularities should play a role in Theorem \ref{thm:characterization of Lq for all q-intro}.
We are particularly grateful to Rami Aizenbud for numerous discussions
on this project, and for initiating the collaboration between the
authors.

SS was supported in part by the European Research Council starting
grant 639305 (SPECTRUM), a Royal Society Wolfson Research Merit Award
(WM170012), and a Philip Leverhulme Prize of the Leverhulme Trust
(PLP-2020-064). 
\end{acknowledgement*}

\section{\label{sec:Embedded-resolution-of}Preliminaries: embedded resolution
of singularities}

Let $F$ be a local field of characteristic zero. We use the following
analytic version of Hironaka's theorem \cite{Hir64} on embedded resolution
of singularities. The map $\pi:\widetilde{X}\rightarrow U$ below
is called a \emph{log-principalization }(or \emph{uniformization})
of $J$. 
\begin{thm}[{{{See \cite[Theorem 2.3]{VZG08}, \cite[Theorem 2.2]{DvdD88}, \cite{BM89}
and \cite{Wlo09}}}}]
\label{thm:analytic resolution of sing}Let $U\subseteq F^{n}$ be
an open subset, and let $f_{1},...,f_{r}:U\rightarrow F$ be $F$-analytic
maps, generating a non-zero ideal $J$ in the algebra of $F$-analytic
functions on $U$. Then there exist an $F$-analytic manifold $\widetilde{X}$,
a proper $F$-analytic map $\pi:\widetilde{X}\rightarrow U$ and a
collection of closed submanifolds $\left\{ E_{i}\right\} _{i\in T}$
of $\widetilde{X}$ of codimension $1$, equipped with pairs of non-negative
integers $\left\{ (a_{i},b_{i})\right\} _{i\in T}$, such that the
following hold: 
\begin{enumerate}
\item $\pi$ is locally a composition of a finite number of blow-ups at
closed submanifolds, and is an isomorphism over the complement of
the common zero set $\mathrm{V}(J)$ of $J$ in $U$. 
\item For every $c\in\widetilde{X}$, there are local coordinates $(\widetilde{x}_{1},...,\widetilde{x}_{n})$
in a neighborhood $V\ni c$, such that each $E_{i}$ containing $c$
is given by the equation $\widetilde{x}_{i}=0$. Moreover, if without
loss of generality $E_{1},...,E_{m}$ contain $c$, then there exists
an $F$-analytic unit $v:V\rightarrow F$, such that the pullback
of $J$ is the principal ideal 
\begin{equation}
\pi^{*}J=\langle\widetilde{x}_{1}^{a_{1}}\cdots\widetilde{x}_{m}^{a_{m}}\rangle\label{eq:prinipalization}
\end{equation}
and such that the Jacobian of $\pi$ (i.e. $\det\,d_{\widetilde{x}}\pi$)
is given by: 
\begin{equation}
\operatorname{Jac}_{\widetilde{x}}(\pi)=v(\widetilde{x})\cdot\widetilde{x}_{1}^{b_{1}}\cdots\widetilde{x}_{m}^{b_{m}}.\label{eq:principalization of Jacobian}
\end{equation}
\end{enumerate}
\end{thm}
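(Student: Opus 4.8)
The plan is not to reprove resolution of singularities but to assemble the statement from the constructive embedded resolution available in characteristic zero (going back to Hironaka \cite{Hir64}), while keeping careful track of the Jacobian. First I would invoke the analytic principalization theorems of Bierstone--Milman \cite{BM89} and W\l odarczyk \cite{Wlo09} in the Archimedean case, and of Denef--van den Dries \cite{DvdD88} and Veys--Z\'u\~niga-Galindo \cite{VZG08} in the non-Archimedean case: since $\operatorname{char}F=0$, there is a proper $F$-analytic map $\pi:\widetilde{X}\to U$, locally a finite composition of blow-ups along closed submanifolds, which is an isomorphism over $U\setminus\mathrm{V}(J)$ and for which $\pi^{*}J$ is locally the ideal of a simple normal crossings divisor $D$, with moreover $D\cup\operatorname{Exc}(\pi)$ simple normal crossings. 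This gives item~(1) and the principalization formula \eqref{eq:prinipalization} once one chooses local coordinates $(\widetilde{x}_{1},\dots,\widetilde{x}_{n})$ adapted to $D\cup\operatorname{Exc}(\pi)$, so that the components $E_{i}$ through a given point are cut out by $\widetilde{x}_{i}=0$.

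Next I would establish the Jacobian formula \eqref{eq:principalization of Jacobian}. The key observation is that blowing up a closed submanifold of codimension $c$ multiplies the Jacobian by the equation of the exceptional divisor to the power $c-1$ --- in the standard chart, where the blow-up is $(\widetilde{x},y_{2},\dots,y_{c})\mapsto(\widetilde{x},\widetilde{x}y_{2},\dots,\widetilde{x}y_{c})$, the Jacobian is $\widetilde{x}^{\,c-1}$ --- so after a finite composition $\operatorname{Jac}_{\widetilde{x}}(\pi)$ is, up to an analytic unit, a monomial in the local equations of the components of $\operatorname{Exc}(\pi)$ (pullbacks of earlier exceptional divisors under later blow-ups remain monomial in the final coordinates). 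Since the components of $\operatorname{Exc}(\pi)$ lie among the $E_{i}$, this is exactly \eqref{eq:principalization of Jacobian}, with $b_{i}=0$ for those $E_{i}$ not in $\operatorname{Exc}(\pi)$ (in particular the Jacobian is a unit over $U\setminus\mathrm{V}(J)$, where $\pi$ is an isomorphism); the exponents $(a_{i},b_{i})$ depend only on $E_{i}$ since the multiplicity of a fixed divisor in $\pi^{*}J$ and in $\operatorname{div}\operatorname{Jac}(\pi)$ is coordinate-independent.

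Two points will need care, and together they constitute the only real obstacle --- which is bookkeeping rather than a new idea. First, a \emph{single} $\pi$ must simultaneously monomialize $J$ and $\operatorname{Jac}(\pi)$ with both divisors supported on the same normal crossings configuration; this is precisely the notion of a \emph{log-resolution}, and it is the form in which the constructive algorithms of \cite{BM89,Wlo09} and their non-Archimedean analytic analogues \cite{DvdD88,VZG08} deliver their output, since one resolves the total transform of $\mathrm{V}(J)$ while recording all the exceptional divisors produced along the way. Second, the construction works over $F$ itself because $\operatorname{char}F=0$, so one takes $F$-analytic points directly and properness is preserved under the finite composition of blow-ups; in the non-Archimedean case the map on $F$-points need not be onto all of $U$, but this is irrelevant since we only claim that it is an isomorphism over $U\setminus\mathrm{V}(J)$ and proper. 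With these points addressed, the statement follows from the cited references in the form stated.
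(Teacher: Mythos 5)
The paper does not prove Theorem~\ref{thm:analytic resolution of sing} but states it as a citation to \cite{VZG08,DvdD88,BM89,Wlo09}, and your proposal does the same thing: it invokes those references for log-principalization and then supplies the standard bookkeeping (the codimension-$c$ blow-up contributes $\widetilde{x}^{\,c-1}$ to the Jacobian, composition stays monomial because admissible centers are normal crossings with the running exceptional divisor, and exponents are divisor multiplicities hence coordinate-free) to extract the precise local form \eqref{eq:prinipalization}--\eqref{eq:principalization of Jacobian}. That bookkeeping is correct and is exactly how the cited sources yield the statement; your side remark that in the non-Archimedean case ``the map on $F$-points need not be onto all of $U$'' is actually unnecessary worry (properness plus density of the image over $U\setminus\mathrm{V}(J)$ forces surjectivity on each connected component meeting $U\setminus\mathrm{V}(J)$), but you correctly dismiss it as immaterial to what is claimed.
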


\begin{rem}
\label{rem:more on Hironakas therem}~ 
\begin{enumerate}
\item Condition (\ref{eq:prinipalization}) means that for each $f_{i}:U\rightarrow F$,
one can write $f_{i}\circ\pi(\widetilde{x})=u_{i}(\widetilde{x})\widetilde{x}_{1}^{a_{1}}\cdots\widetilde{x}_{m}^{a_{m}}$
for some analytic functions $u_{i}$, and $u_{j}(0)\neq0$ for at
least one $j\in\{1,...,r\}$. Note that the $a_{1},...,a_{m}$ are
the same for all the $f_{i}$'s. 
\item If $r=1$, so that $J=\langle f\rangle$, we may further assume that
$f\circ\pi(\widetilde{x}_{1},...,\widetilde{x}_{n})=C\cdot\widetilde{x}_{1}^{a_{1}}\cdots\widetilde{x}_{m}^{a_{m}}$
locally on each chart, for some constant $C\neq0$. Indeed, $u(0)\neq0$.
If $u(0)$ is an $a_{1}$-th power in $F$ then the same holds for
$u(\widetilde{x})$ in a small neighborhood of $0$. In this case,
we may apply the change of coordinates 
\[
(\widetilde{x}_{1},...,\widetilde{x}_{n})\mapsto(u(\widetilde{x})^{-\frac{1}{a_{1}}}\widetilde{x}_{1},...,\widetilde{x}_{n}).
\]
If $u(0)$ is not an $a_{1}$-th power we may multiply it by $b\in F^{\times}$
such that $u(\widetilde{x})b$ is an $a_{1}$-th power, and apply
a similar change of coordinates. 
\end{enumerate}
\end{rem}

The next lemma follows directly by changing coordinates using a log-principalization
of $J$ and computing the integral $\int\min\limits _{1\leq i\leq l}\big[\left|f_{i}(x)\right|^{-s}\big]d\mu(s)$
with respect to the new coordinates. 
\begin{lem}[{See e.g. \cite[Theorem 1.2]{Mus12} and \cite[Theorem 2.7]{VZG08}}]
\label{Lemma:F-lct}Let $J=\langle f_{1},...,f_{r}\rangle$ be an
ideal of $F$-analytic functions on $U\subseteq F^{n}$. Let $\pi:\widetilde{X}\rightarrow U$
be a log-principalization of $J$, with data $\left\{ E_{i}\right\} _{i\in T}$
and $\left\{ (a_{i},b_{i})\right\} _{i\in T}$ as in Theorem \ref{thm:analytic resolution of sing}.
Then the $F$-log-canonical threshold of $J$ at $x\in U$ is equal
to: 
\begin{equation}
\operatorname{lct}_{F}(J;x)=\min_{i:\,x\in\pi(E_{i})}\frac{b_{i}+1}{a_{i}}.\label{eq:lct algebraic}
\end{equation}
\end{lem}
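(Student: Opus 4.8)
The plan is to reduce the computation of $\operatorname{lct}_F(J;x)$ to a local computation on each chart of the log-principalization $\pi$, where the integrand becomes a product of powers of coordinates and the convergence condition is classical. First I would fix a small neighborhood $U' \ni x$ and a smooth compactly supported measure $\mu$ on $U'$, and note that by definition $\operatorname{lct}_F(J;x)$ is the supremum of all $s > 0$ such that $\int_{U'} \min_i |f_i(y)|_F^{-s}\, d\mu(y) < \infty$ for all sufficiently small $U'$ and all $\mu \in \mathcal{M}_{c,\infty}(U')$. Since $\pi$ is proper and an isomorphism outside $\mathrm{V}(J)$, I would change variables via $\pi$: writing $\widetilde\mu$ for the pullback measure (whose density picks up the Jacobian factor $|\operatorname{Jac}_{\widetilde x}(\pi)|_F$), the integral becomes $\int_{\pi^{-1}(U')} \min_i |f_i(\pi(\widetilde y))|_F^{-s}\, d\widetilde\mu(\widetilde y)$. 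The preimage $\pi^{-1}(\overline{U'})$ is compact, so it is covered by finitely many charts $V$ as in Theorem~\ref{thm:analytic resolution of sing}, and finiteness of the integral is equivalent to finiteness on each such chart (using a partition of unity).

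Next, on a fixed chart $V$ with coordinates $(\widetilde x_1,\dots,\widetilde x_n)$ where the divisors $E_1,\dots,E_m$ through the relevant point are $\{\widetilde x_i = 0\}$, I would invoke Remark~\ref{rem:more on Hironakas therem}(1): each $f_i \circ \pi = u_i(\widetilde x)\, \widetilde x_1^{a_1}\cdots \widetilde x_m^{a_m}$ with at least one $u_j$ a unit, so that $\min_i |f_i(\pi(\widetilde x))|_F^{-s} \sim |\widetilde x_1|_F^{-s a_1}\cdots |\widetilde x_m|_F^{-s a_m}$ locally (the minimum over $i$ only helps, and on a small enough chart the unit $u_j$ is bounded away from $0$). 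Combined with $|\operatorname{Jac}_{\widetilde x}(\pi)|_F = |v(\widetilde x)|_F \cdot |\widetilde x_1|_F^{b_1}\cdots |\widetilde x_m|_F^{b_m}$ from \eqref{eq:principalization of Jacobian} and a bounded smooth density for $\widetilde\mu$, the local integral is comparable to
\[
\int_{\text{nbhd of }0} \prod_{i=1}^m |\widetilde x_i|_F^{b_i - s a_i}\, d\mu_F(\widetilde x_1)\cdots d\mu_F(\widetilde x_n),
\]
which factors and converges if and only if $b_i - s a_i > -1$, i.e. $s < (b_i+1)/a_i$, for every $i$ with $a_i > 0$ appearing on that chart. (When $a_i = 0$ there is no constraint, consistent with $E_i$ not lying over $\mathrm{V}(J)$.)

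Finally I would globalize: ranging over all charts of $\pi^{-1}(\overline{U'})$ as $U'$ shrinks to $x$, the set of divisors $E_i$ that contribute is exactly those with $x \in \pi(E_i)$ and $a_i > 0$, so the supremal $s$ for which convergence holds on every chart is $\min_{i:\, x \in \pi(E_i)} (b_i+1)/a_i$, giving \eqref{eq:lct algebraic}. I should also check that the answer is insensitive to the choice of $\mu \in \mathcal{M}_{c,\infty}(U')$: any two such measures with positive density at $x$ give comparable integrals near $x$, and taking $\mu$ to be (a smooth approximation of) the restriction of Haar measure to a small ball shows the supremum is achieved with this canonical choice. The one point requiring a little care — and the closest thing to an obstacle — is the interchange of "finiteness for all $\mu$" with "finiteness for Haar measure on a ball," i.e. verifying that the $\sup$ in \eqref{eq:lct-analytic definition} is governed by the worst-case smooth density (bounded above and below near $x$) rather than by some degenerate $\mu$; this follows since densities of measures in $\mathcal{M}_{c,\infty}$ are bounded and one may always compare with a uniform measure on a small ball containing $x$ in its interior, but it is the step where the precise definition of $\operatorname{lct}_F$ must be matched against the resolution computation.
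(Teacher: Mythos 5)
Your proof is correct and follows exactly the route the paper indicates: the paper gives no written argument for this lemma beyond the one-line remark that it "follows directly by changing coordinates using a log-principalization of $J$ and computing the integral with respect to the new coordinates," which is precisely the change-of-variables-then-local-chart computation you carry out (including the correct invocation of Remark~\ref{rem:more on Hironakas therem}(1) to replace $\min_i|f_i\circ\pi|^{-s}$ by the monomial $\prod|\widetilde x_i|_F^{-sa_i}$, and the reduction to Haar-on-a-ball as the extremal $\mu\in\mathcal{M}_{c,\infty}$). Your last paragraph is phrased a touch loosely — the worst case is governed by densities bounded \emph{above} (and Haar on a small ball realizes it); boundedness below is not needed and is not generic in $\mathcal{M}_{c,\infty}$ — but this does not affect the argument's validity.
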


\begin{rem}
\label{rem:lct vs F-lct}Note that while the data $\left\{ E_{i}\right\} _{i\in T}$
and $\left\{ (a_{i},b_{i})\right\} _{i\in T}$ depends on the log-principalization
$\pi$, Definition~\ref{eq:lct-analytic definition} is intrinsic,
thus the right-hand side of (\ref{eq:lct algebraic}) does not depend
on $\pi$. 
\end{rem}

\section{\label{sec:Formula-for-}The one-dimensional case}

\subsection{A formula for $\epsilon_{\star}$}

In this section we provide a formula for $\epsilon_{\star}$ in the
one-dimensional case (Theorem \ref{thm:one dimensional}). The formula
will be phrased in terms of the $F$-log-canonical threshold, where
$F$ is any local field of characteristic zero.

When $F$ is non-Archimedean, we denote by $\mathcal{O}_{F}$ its
ring of integers, by $k_{F}$ its residue field, and by $q_{F}$ the
number of elements in $k_{F}$. Write $\varpi_{F}\in\mathcal{O}_{F}$
for a fixed uniformizer (i.e., a generator of the maximal ideal of
the ring of integers) of $F$, and let $\left|\,\cdot\,\right|_{F}$
be as in $\mathsection$\ref{subsec:Conventions}, so that $\left|\varpi_{F}\right|_{F}=q_{F}^{-1}$.
We write $\mu_{F}$ for the Haar measure on $F$, normalized such
that $\mu_{F}(\mathcal{O}_{F})=1$ when $F$ is non-Archimedean, and
such that $\mu_{F}$ is the Lebesgue measure when $F$ is Archimedean.
We write $\mu_{\mathcal{O}_{F}}:=\mu_{F}|_{\mathcal{O}_{F}}$. We
write $dx$ instead of $\mu_{F}$ when we integrate a function $g(x)$
with respect to $\mu_{F}$. We denote by $\left\Vert (a_{1},...,a_{n})\right\Vert _{F}:=\max_{i}\left|a_{i}\right|_{F}$
the maximum norm on $F^{n}$.

For an analytic map $\phi:X\rightarrow F$, and $x\in X$, we set
$\phi_{x}(z):=\phi(z)-\phi(x)$.

To prove Theorem \ref{thm:one dimensional}, we reduce to the monomial
case using Hironaka's resolution of singularities ($\mathsection$\ref{sec:Embedded-resolution-of}),
and prove the monomial case in Lemma \ref{lem:formula for monomial maps}.
However, we first note that the upper bound in (\ref{eq:formula for epsilon one dimensional})
can be proved by elementary arguments, as follows: 
\begin{lem}
\label{lem:upper bound epsilon n=00003D00003D1}Let $X$ be an analytic
$F$-manifold, and let $\phi:X\to F$ be a locally dominant analytic
map. Then for every $x\in X$ with $\operatorname{lct}_{F}(\phi_{x};x)<1$,
one has: 
\[
\epsilon_{\star}(\phi;x)\leq\frac{\operatorname{lct}_{F}(\phi_{x};x)}{1-\operatorname{lct}_{F}(\phi_{x};x)}.
\]
\end{lem}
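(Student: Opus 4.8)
The plan is to exploit the definition (\ref{eq:defeps-phi}) of $\epsilon_{\star}(\phi;x)$: writing $\lambda:=\operatorname{lct}_{F}(\phi_{x};x)\in(0,1)$, it suffices to show that for every open neighbourhood $U$ of $x$ and every $\epsilon>\frac{\lambda}{1-\lambda}$ there is a measure $\mu\in\mathcal{M}_{c}^{\infty}(U)$ whose pushforward $\phi_{*}\mu$ does \emph{not} lie in $\mathcal{M}_{c,1+\epsilon}(F)$; then $\epsilon_{\star}(\phi_{*}\mu)\le\epsilon$, so $\inf_{\mu\in\mathcal{M}_{c}^{\infty}(U)}\epsilon_{\star}(\phi_{*}\mu)\le\epsilon$ for all such $U$ and $\epsilon$, and taking the supremum over $U$ and letting $\epsilon\downarrow\frac{\lambda}{1-\lambda}$ gives the claim. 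After composing $\phi$ with a translation of the target (which preserves both sides) we may assume $\phi(x)=0$, and after passing to a chart we may assume $X\subseteq F^{n}$ is open and $x=0$, so that $\phi_{x}=\phi$ and $\lambda=\operatorname{lct}_{F}(\phi;0)$.

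The mechanism linking integrability of $\phi_{*}\mu$ to the log-canonical threshold is elementary. For $\mu\in\mathcal{M}_{c}^{\infty}(U)$ let $g=g_{\mu}$ be the ($L^{1}$) density of $\phi_{*}\mu$ with respect to $\mu_{F}$; the disintegration (change of variables) formula gives, for every $s>0$,
\[
\int_{F}|t|_{F}^{-s}\,g(t)\,d\mu_{F}(t)=\int_{U}|\phi(z)|_{F}^{-s}\,d\mu(z).
\]
If moreover $g\in L^{1+\epsilon}(F)$, Hölder's inequality with conjugate exponent $q=\frac{1+\epsilon}{\epsilon}$ bounds $\int_{\|t\|_{F}\le1}|t|_{F}^{-s}g(t)\,d\mu_{F}(t)$ by $\|g\|_{L^{1+\epsilon}}\cdot(\int_{\|t\|_{F}\le1}|t|_{F}^{-sq}\,d\mu_{F}(t))^{1/q}$, and with the normalisation of $|\,\cdot\,|_{F}$ fixed in \S\ref{subsec:Conventions} this last integral converges precisely when $sq<1$, i.e.\ when $s<\frac{\epsilon}{1+\epsilon}$, uniformly over all local fields under consideration. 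Since also $\int_{\|t\|_{F}>1}|t|_{F}^{-s}g(t)\,d\mu_{F}(t)\le\|\mu\|<\infty$, we conclude: if $g\in L^{1+\epsilon}(F)$ then $\int_{U}|\phi|_{F}^{-s}\,d\mu<\infty$ for every $s<\frac{\epsilon}{1+\epsilon}$. Contrapositively, it is enough to produce, for some $s<\frac{\epsilon}{1+\epsilon}$, a measure $\mu\in\mathcal{M}_{c}^{\infty}(U)$ with $\int_{U}|\phi|_{F}^{-s}\,d\mu=\infty$.

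To produce it, note that $\epsilon>\frac{\lambda}{1-\lambda}$ is equivalent to $\frac{\epsilon}{1+\epsilon}>\lambda$, so fix $s$ with $\lambda<s<\frac{\epsilon}{1+\epsilon}$. Since $s>\operatorname{lct}_{F}(\phi;0)$, the defining condition in (\ref{eq:def-lct}) fails at $s$: for the given $U$ there exists $\mu_{0}\in\mathcal{M}_{c,\infty}(U)$ with $\int_{U}|\phi|_{F}^{-s}\,d\mu_{0}=\infty$. As the density of $\mu_{0}$ is bounded, $\int_{\operatorname{supp}\mu_{0}}|\phi|_{F}^{-s}\,d\mu_{F}=\infty$; choosing $\rho\in C_{c}^{\infty}(U)$ with $\rho\ge\mathbf 1_{\operatorname{supp}\mu_{0}}$ — a smooth bump in the Archimedean case, the indicator of a finite union of small balls contained in $U$ in the non-Archimedean case — the measure $\mu:=\rho\,\mu_{F}\in\mathcal{M}_{c}^{\infty}(U)$ still satisfies $\int_{U}|\phi|_{F}^{-s}\,d\mu=\infty$, as needed. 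I expect the only delicate point to be this last step, namely the soft localisation/mollification passing from the merely $L^{\infty}$-density measure handed over by the definition of $\operatorname{lct}_{F}$ to an honestly smooth compactly supported one, phrased uniformly over Archimedean and non-Archimedean $F$; together with the (minor) bookkeeping that the threshold $sq<1$ is the same across all our local fields thanks to the chosen normalisation of $|\,\cdot\,|_{F}$. The remaining ingredients — disintegration, Hölder, and the elementary monotonicity of $t\mapsto\frac{t}{1+t}$ — are routine.
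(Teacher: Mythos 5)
Your proof is correct and is essentially the paper's proof, phrased contrapositively. The core estimate in both arguments is the same: if the density $g$ of $\phi_{*}\mu$ lies in $L^{1+\epsilon}(F)$ then $\int|\phi_{x}|_{F}^{-s}\,d\mu<\infty$ for all $s<\frac{\epsilon}{1+\epsilon}$; the paper derives this from Jensen on ball averages followed by a Fubini/distribution-function computation, whereas you apply H\"older directly to $\int|t|_{F}^{-s}g(t)\,d\mu_{F}(t)$, but these are two phrasings of the same inequality. Your contrapositive arrangement obliges you to add the mollification step at the end (upgrading a merely $L^{\infty}$-density witness to a smooth one); this is correct as written, but is unnecessary given the second equality in (\ref{eq:defeps-phi}), which already expresses $\epsilon_{\star}(\phi;x)$ via an infimum over $\mathcal{M}_{c,\infty}(U)$.
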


\begin{proof}
We need to show 
\[
1-\frac{1}{1+\epsilon_{\star}(\phi;x)}\leq\operatorname{lct}_{F}(\phi_{x};x).
\]
Let $\epsilon<\epsilon_{\star}(\phi;x)$. Then there exists a neighborhood
$U$ of $x$ such that $\phi_{*}\sigma\in L^{1+\epsilon}$ for every
$\sigma\in\mathcal{M}_{c,\infty}(U)$. Write $\phi_{*}\sigma=g(t)\mu_{F}$.
Let $B(a,\delta):=\left\{ t:\left|t-a\right|_{F}\leq\delta\right\} $,
and note that $\mu_{F}(B(a,\delta))\sim\delta$. By Jensen's inequality
we have: 
\[
(\phi_{*}\sigma)(B(a,\delta))=\frac{1}{\delta}\int_{B(a,\delta)}\delta g(t)dt\leq\left(\frac{1}{\delta}\int_{B(a,\delta)}\left(\delta g(t)\right)^{1+\epsilon}dt\right)^{\frac{1}{1+\epsilon}}\lesssim\delta^{1-\frac{1}{1+\epsilon}},
\]
i.e.\ we have the distributional estimate $\sigma\left(\left\{ z:\left|\phi(z)-\phi(x)\right|_{F}\leq\delta\right\} \right)\lesssim\delta^{1-\frac{1}{1+\epsilon}}$.
Using Fubini's theorem, we obtain: 
\begin{align*}
\int_{U}\left|\phi_{x}(z)\right|_{F}^{-s}d\sigma & =\int_{U}\left(\int_{0}^{\infty}1_{\left\{ (t,z)\,:\left|\phi_{x}(z)\right|_{F}^{-s}\geq t\right\} }(t,z)dt\right)d\sigma\\
 & =\int_{0}^{\infty}\sigma\left\{ z:\left|\phi(z)-\phi(x)\right|_{F}^{-s}\geq t\right\} dt\\
 & =\int_{0}^{\infty}\sigma\left\{ z:\left|\phi(z)-\phi(x)\right|_{F}\leq t^{-1/s}\right\} dt\\
 & \lesssim1+\int_{1}^{\infty}t^{-\frac{1}{s}\left(1-\frac{1}{1+\epsilon}\right)}dt<\infty,
\end{align*}
whenever $s<1-\frac{1}{1+\epsilon}$. This implies that $\operatorname{lct}_{F}(\phi_{x};x)\geq1-\frac{1}{1+\epsilon_{\star}(\phi;x)}$. 
\end{proof}
We now return to the main narrative. Specializing to the setting of
Hironaka's theorem, we consider the monomial case. 
\begin{lem}
\label{lem:formula for monomial maps}Let $f:F^{n}\rightarrow F$
be a monomial map $f(x_{1},...,x_{n})=x_{1}^{a_{1}}\cdot...\cdot x_{n}^{a_{n}}$,
let $g:F^{n}\rightarrow\R_{\geq0}$ be a continuous function and let
$\mu=g(x)\left|x_{1}\right|_{F}^{b_{1}}\cdot...\cdot\left|x_{n}\right|_{F}^{b_{n}}\mu_{F}^{n}$,
for $a_{1},...,a_{n}\in\Z_{\geq1}$ and $b_{1},...,b_{n}\in\Z_{\geq0}$.
Then: 
\[
\epsilon_{\star}(f_{*}\mu)\,\begin{cases}
=\infty & \text{if\, }\min_{i}\frac{b_{i}+1}{a_{i}}\geq1,\\
\geq\frac{\min_{i}\frac{b_{i}+1}{a_{i}}}{1-\min_{i}\frac{b_{i}+1}{a_{i}}} & \text{otherwise}.
\end{cases}
\]
Furthermore, if $g(0)\neq0$, the second bound is in fact an equality. 
\end{lem}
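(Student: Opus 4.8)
The plan is to reduce the $n$-variable statement to an explicit one-variable computation of the density of $f_*\mu$ near $0$, and then invoke Lemma \ref{lem:upper bound epsilon n=00003D00003D1} (together with the elementary computation of $\operatorname{lct}_F$ of a monomial, which by Lemma \ref{Lemma:F-lct} applied to the already-principalized ideal $\langle x_1^{a_1}\cdots x_n^{a_n}\rangle$ equals $\min_i\frac{b_i+1}{a_i}$) to pin down the exact value when $g(0)\neq 0$. First I would localize: since $g$ is continuous, it suffices (for the lower bounds, which are the substance of the "$\epsilon_\star(f_*\mu)$'' statement, and which are unaffected by shrinking the support) to bound $g$ above by a constant on a small polydisc $\prod_i B(0,\delta_i)$ and treat the tails, where $f$ is a submersion away from the coordinate hyperplanes, separately — there the pushforward is smooth, hence in every $L^q$, so only a neighborhood of the origin matters. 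So I reduce to $\mu = c\cdot \mathbbm{1}_{\prod B(0,\delta_i)}\,|x_1|_F^{b_1}\cdots|x_n|_F^{b_n}\,\mu_F^n$ for the upper estimate on $\epsilon_\star$ from below, and to $\mu \geq c'\cdot \mathbbm{1}_{\prod B(0,\delta_i)}\,|x_1|_F^{b_1}\cdots|x_n|_F^{b_n}\,\mu_F^n$ for the matching lower estimate when $g(0)\neq0$.

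The key computation is the density of the pushforward of the model measure. Writing $\nu_i$ for the pushforward of $\mathbbm{1}_{B(0,\delta_i)}|x_i|_F^{b_i}\mu_F$ under $x_i\mapsto x_i^{a_i}$, a change of variables $t=x_i^{a_i}$ (done carefully: over $\mathbb{C}$ one has $|dx_i|_{\mathbb C} = \tfrac{1}{a_i^2}|t|_{\mathbb C}^{1/a_i-1}|dt|_{\mathbb C}$, over $\mathbb R$ one splits into monomials of each sign, and in the non-Archimedean case one sums over the $a_i$-th roots of unity present in $F$) shows that $\nu_i$ has density $\sim |t|_F^{\frac{b_i+1}{a_i}-1}$ near $0$, i.e.\ $|t|_F^{-(1-\lambda_i)}$ with $\lambda_i:=\frac{b_i+1}{a_i}$. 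Then $f_*\mu$ is (a constant times) the multiplicative convolution $\nu_1\boxtimes\cdots\boxtimes\nu_n$ on $F$, i.e.\ the pushforward of $\nu_1\otimes\cdots\otimes\nu_n$ under multiplication $F^n\to F$. A standard estimate for the product of such power-type densities — integrating $\prod_i |s_i|_F^{-(1-\lambda_i)}$ over $\{s_1\cdots s_n = t\}$, which is dominated by the smallest exponent $\lambda_{\min}:=\min_i\lambda_i$ — gives that $f_*\mu$ has density $h(t)$ with $h(t)\sim |t|_F^{\lambda_{\min}-1}$ (up to logarithmic factors when the minimum is attained more than once, which do not affect $\epsilon_\star$) when $\lambda_{\min}<1$, and $h$ bounded (indeed continuous, in every $L^q$ near $0$) when $\lambda_{\min}\geq 1$. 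From $h(t)\sim|t|_F^{\lambda_{\min}-1}$ one reads off directly that $h\in L^{1+\epsilon}_{\mathrm{loc}}$ iff $(1-\lambda_{\min})(1+\epsilon)<1$, i.e.\ $\epsilon < \frac{\lambda_{\min}}{1-\lambda_{\min}}$; this is precisely the lower bound, and — since for $g(0)\neq0$ the two-sided bound $c'\leq g\leq c$ on a small polydisc makes $h(t)\asymp|t|_F^{\lambda_{\min}-1}$ two-sided — it is an equality. The case $\lambda_{\min}\geq1$ gives $h$ locally bounded so $f_*\mu\in\mathcal{M}_{c,q}$ for all $q$, i.e.\ $\epsilon_\star(f_*\mu)=\infty$.

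The main obstacle is the convolution estimate in the middle step: controlling the density of a product of several $|x|_F^{-(1-\lambda_i)}$-type densities uniformly, including the logarithmic corrections when several exponents coincide and, in the Archimedean real case, the bookkeeping of signs and of the boundary contributions where $|x_i|\asymp\delta_i$. One clean way to organize this is by induction on $n$: $\nu_1\boxtimes\cdots\boxtimes\nu_n = (\nu_1\boxtimes\cdots\boxtimes\nu_{n-1})\boxtimes\nu_n$, and a single multiplicative convolution of $|t|_F^{\alpha-1}$ with $|t|_F^{\beta-1}$ on a neighborhood of $0$ produces $|t|_F^{\min(\alpha,\beta)-1}$ (times $\log\frac1{|t|_F}$ if $\alpha=\beta$), which is an entirely elementary integral in each of the three cases $F=\mathbb R,\mathbb C$, non-Archimedean. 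An alternative, perhaps slicker, is to avoid densities entirely and argue at the level of the distribution function: show $\mu(\{|f(x)-0|_F\leq\rho\}) \asymp \rho^{\lambda_{\min}}$ (with a possible $\log$ factor) by the same change-of-variables, then feed this into the Jensen/Fubini argument of Lemma \ref{lem:upper bound epsilon n=00003D00003D1} for the upper bound on $\epsilon_\star$ and into a reverse, direct computation of $\int (h(t))^{1+\epsilon}dt$ for the lower bound. Either route keeps the heavy lifting confined to a one-variable monomial integral.
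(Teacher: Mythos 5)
Your proposal takes a genuinely different route from the paper. You factor $f_*\mu$ as an $n$-fold multiplicative (Mellin) convolution $\nu_1\boxtimes\cdots\boxtimes\nu_n$ of one-variable pushforwards and estimate the resulting density inductively. The paper instead performs a single coordinate change that peels off the critical index: assuming WLOG that $\frac{b_n+1}{a_n}=\min_i\frac{b_i+1}{a_i}$ is uniquely attained, it factors $f=\operatorname{pr}_n\circ\psi$ with $\psi(x)=(x_1,\ldots,x_{n-1},x_1^{a_1}\cdots x_n^{a_n})$ and computes the density of $\psi_*\mu$ explicitly; the strict inequalities $\frac{b_j+1}{a_j}>\frac{b_n+1}{a_n}$ guarantee that the exponents $b_j-a_j-\frac{(b_n-a_n+1)a_j}{a_n}$ in the remaining variables are all $>-1$, so integrating them out is harmless and gives $\frac{d(f_*\mu)}{d\mu_F}(s)\lesssim|s|_F^{\frac{b_n-a_n+1}{a_n}}$ directly, with no logarithmic corrections. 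Ties are handled afterwards by perturbing a $b_i$ by $\pm\delta$ and letting $\delta\to0$. Your convolution picture is more symmetric and conceptually transparent (the one-variable exponents $\lambda_i=\frac{b_i+1}{a_i}$ appear directly and the minimum emerges from the convolution), but you must then confront the logarithmic factors at coincident exponents head on, which you correctly flag as the main obstacle; the paper's change of variables plus perturbation trick is a clean way to sidestep exactly that.

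One genuine misstep in your setup: the localization claims that on the tails outside a polydisc ``the pushforward is smooth, hence in every $L^q$, so only a neighborhood of the origin matters.'' This is not correct. The critical locus of $f$ is the whole union of coordinate hyperplanes, which meets the tails of any polydisc; the pushforward of $\mu$ restricted to, say, $\{|x_1|_F\geq\delta_1\}$ still blows up as $t\to0$ at rate $\sim|t|_F^{\min_{j\neq1}\frac{b_j+1}{a_j}-1}$ coming from the region where the other $x_j$ are small. These tail contributions are of the same or lower order than the polydisc contribution, so they do not destroy the final bound, but they are emphatically not in $L^\infty$ near $t=0$ and cannot be discarded on smoothness grounds. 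Fortunately the multiplicative-convolution computation, applied to all of $\operatorname{supp}(g)$ rather than to a small polydisc, already accounts for every such contribution correctly, so the faulty localization was never needed and your argument goes through once that step is simply dropped.
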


\begin{proof}
Without loss of generality we may assume $\min_{i}\frac{b_{i}+1}{a_{i}}=\frac{b_{n}+1}{a_{n}}$.
We first consider the special case when $\frac{b_{n}+1}{a_{n}}$ is
the unique minimum. We write $f$ as a composition $f=\operatorname{pr}\circ\psi$,
where $\psi:F^{n}\rightarrow F^{n}$ is given by $\psi(x_{1},...,x_{n})=(x_{1},...,x_{n-1},x_{1}^{a_{1}}\cdot...\cdot x_{n}^{a_{n}})$
and $\operatorname{pr}$ is the projection to the last coordinate.
Write $x=(\overline{x},x_{n})$, where $\overline{x}=(x_{1},...,x_{n-1})$,
and similarly $y=(\overline{y},y_{n})$. Note that $\operatorname{Jac}_{(x_{1},...,x_{n})}(\psi)=a_{n}x_{1}^{a_{1}}\cdot...\cdot x_{n-1}^{a_{n-1}}x_{n}^{a_{n}-1}$
and that if $\psi(\overline{x},x_{n})=(\overline{y},y_{n})$, then
$\overline{x}=\overline{y}$ and $x_{n}^{a_{n}}=y_{n}y_{1}^{-a_{1}}\cdot...\cdot y_{n-1}^{-a_{n-1}}$.
Hence the Radon\textendash Nikodym density of $\psi_{*}\mu$ is equal
to 
\begin{align*}
\frac{d(\psi_{*}\mu)}{d(\mu_{F}^{n})}(\overline{y},y_{n}) & =\sum_{t:\psi(\overline{y},t)=y}\frac{\left(\prod_{j=1}^{n-1}\left|y_{j}\right|_{F}^{b_{j}}\right)\left|t\right|_{F}^{b_{n}}g(\overline{y},t)}{\left|\operatorname{Jac}_{(y_{1},...,y_{n-1},t)}(\psi)\right|_{F}}\\
 & =\sum_{t:\psi(\overline{y},t)=y}g(\overline{y},t)\left|a_{n}\right|_{F}^{-1}\cdot\left(\prod_{j=1}^{n-1}\left|y_{j}\right|_{F}^{b_{j}-a_{j}}\right)\left|t\right|_{F}^{b_{n}-a_{n}+1}\\
 & =\left(\sum_{t:\psi(\overline{y},t)=y}g(\overline{y},t)\right)\cdot\left|a_{n}\right|_{F}^{-1}\left(\prod_{j=1}^{n-1}\left|y_{j}\right|_{F}^{b_{j}-a_{j}-\frac{(b_{n}-a_{n}+1)a_{j}}{a_{n}}}\right)\left|y_{n}\right|_{F}^{\frac{b_{n}-a_{n}+1}{a_{n}}}.
\end{align*}
Since $\frac{b_{n}+1}{a_{n}}<\frac{b_{j}+1}{a_{j}}$ we get for every
$1\leq j\leq n-1$, 
\[
a_{n}(b_{j}-a_{j}+1)>(b_{n}-a_{n}+1)a_{j},
\]
and hence: 
\[
b_{j}-a_{j}-\frac{(b_{n}-a_{n}+1)a_{j}}{a_{n}}>-1.
\]
In particular, integrating over the first $n-1$ coordinates, we get
that $\frac{d(f_{*}\mu)}{d\mu_{F}}(s)\lesssim\left|s\right|_{F}^{\frac{b_{n}-a_{n}+1}{a_{n}}}$.
If $\frac{b_{n}+1}{a_{n}}\geq1$ then $\epsilon_{\star}(f_{*}\mu)=\infty$
as required. Thus we may assume that $\frac{b_{n}+1}{a_{n}}<1$. Then
$f_{*}\mu\in\mathcal{M}_{c,1+\epsilon}(F)$, whenever $\frac{b_{n}-a_{n}+1}{a_{n}}(1+\epsilon)>-1$,
i.e.\ whenever 
\[
\epsilon<\frac{b_{n}+1}{a_{n}-b_{n}-1}=\frac{\frac{b_{n}+1}{a_{n}}}{1-\frac{b_{n}+1}{a_{n}}}=\frac{\min_{i}\frac{b_{i}+1}{a_{i}}}{1-\min_{i}\frac{b_{i}+1}{a_{i}}}.
\]
If $g(0)\neq0,$ we also have $\frac{d(f_{*}\mu)}{d\mu_{F}}(s)\gtrsim\left|s\right|_{F}^{\frac{b_{n}-a_{n}+1}{a_{n}}}$,
whence the inequality in the statement of the lemma is in fact an
equality.

It is left to deal with the case when $\min_{i}\frac{b_{i}+1}{a_{i}}$
is not uniquely achieved. For the lower bound, take 
\[
\widetilde{\mu}=\left|x_{1}\right|_{F}^{b_{1}}...\left|x_{n-1}\right|_{F}^{b_{n-1}}\left|x_{n}\right|_{F}^{b_{n}-\delta}g(x)\mu_{F}^{n},
\]
for an arbitrarily small $\delta>0$. Since $\widetilde{\mu}\geq\mu$
inside a small neighborhood of $0$, we deduce that $f_{*}\mu\in\mathcal{M}_{c,1+\epsilon}(F)$
for $\epsilon<\frac{\frac{b_{n}-\delta+1}{a_{n}}}{1-\frac{b_{n}-\delta+1}{a_{n}}}$.
Since $\delta$ can be taken arbitrarily small we are done. Similarly,
for the upper bound we take 
\[
\widetilde{\mu}=\left|x_{1}\right|_{F}^{b_{1}+\delta}\cdot...\cdot\left|x_{n-1}\right|_{F}^{b_{n-1}+\delta}\left|x_{n}\right|_{F}^{b_{n}}g(x)\mu_{F}^{n}\leq\mu,
\]
with $g(0)\neq0$ and deduce that $\epsilon_{\star}(f_{*}\mu)\leq\frac{\frac{b_{n}+1}{a_{n}}}{1-\frac{b_{n}+1}{a_{n}}}$. 
\end{proof}
\begin{proof}[Proof of Theorem \ref{thm:one dimensional}]
Let $\phi:X\to F$ be a locally dominant analytic map, and let $x_{0}\in X$.
Replacing $\phi$ with $\phi_{x_{0}}=\phi-\phi(x_{0})$, we may assume
that $\phi(x_{0})=0$. We may further assume that $X\subseteq F^{n}$
is open, and apply Theorem \ref{thm:analytic resolution of sing},
to get a log-principalization $\pi:\widetilde{X}\rightarrow X$, such
that, locally on a chart around a point in $\pi^{-1}(x_{0})$, $\phi_{x_{0}}\circ\pi(\widetilde{x}_{1},...,\widetilde{x}_{n})=C\widetilde{x}_{1}^{a_{1}}\cdot...\cdot\widetilde{x}_{n}^{a_{n}}$,
and 
\[
\operatorname{Jac}_{\widetilde{x}}(\pi)=v(\widetilde{x})\cdot\widetilde{x}_{1}^{b_{n}}\cdot...\cdot\widetilde{x}_{n}^{b_{n}}.
\]
Let $\sigma\in\mathcal{M}_{c,\infty}(X)$, with $\sigma=g(x)\mu_{F}^{n}$,
and $g>0$ in a neighborhood of $x_{0}$. Then $\sigma=\pi_{*}\mu$,
where 
\[
\mu=(g\circ\pi)(\widetilde{x}_{1},...,\widetilde{x}_{n})\left|v(\widetilde{x})\right|_{F}\left|\widetilde{x}_{1}\right|_{F}^{b_{1}}...\left|\widetilde{x}_{n}\right|_{F}^{b_{n}}\mu_{F}^{n}.
\]
Since $\phi_{*}\sigma=(\phi\circ\pi)_{*}\mu$, the theorem now follows
from Lemmas \ref{lem:formula for monomial maps} and \ref{Lemma:F-lct}. 
\end{proof}

\subsection{\label{sub:Further discussion}Relation to Fourier decay and other
invariants}

Let $\phi:X\rightarrow Y$ be an $F$-analytic map between $F$-analytic
manifolds $X$ and $Y$. We have seen that each of the exponents $\epsilon_{\star}(\phi;x)$
and $\operatorname{lct}_{F}(\phi_{x};x)$ provides a different quantification
for the singularities of $\phi$ near $x\in X$. When $Y=F^{m}$,
one can further consider other invariants involving the Fourier transform
of pushforward measures.

In $\S$\ref{subsec:Application:-regularization-by}, we have defined
$k_{\star}(\phi;x)$ as the minimal number of self-convolutions after
which the pushforward densities of smooth measures supported near
$x$ become bounded. Note that by the Plancherel theorem, for each
$\nu\in\mathcal{M}_{c,1}(F^{m})$ we have $\nu^{*k}\in\mathcal{M}_{c,2}(F^{m})$
if and only if $\mathcal{F}(\nu)\in L^{2k}(F^{m})$, whence 
\[
\nu^{*k}\in\mathcal{M}_{c,\infty}(F^{m})\Longrightarrow\mathcal{F}(\nu)\in L^{2k}(F^{m})\Longrightarrow\nu^{*2k}\in\mathcal{M}_{c,\infty}(F^{m}).
\]
Thus, the exponent $k_{\star}(\phi;x)$ is, in general, roughly comparable
to the $L^{p}$-class of $\mathcal{F}(\phi_{*}\mu)$ rather than to
the $L^{p}$-class of the pushforward measure $\phi_{*}\mu$ itself.
Instead of the $L^{p}$-class, we now focus on an invariant quantifying
the Fourier decay of $\phi_{*}\mu$ on the power-law scale: 
\[
\delta_{\star}(\phi;x):=\sup_{U\ni x}\inf_{\mu\in\mathcal{M}_{c,\infty}(U)}\delta_{\star}(\phi_{*}\mu),
\]
where 
\begin{equation}
\delta_{\star}(\nu):=\sup\{\delta\geq0\,:\,\left|\mathcal{F}(\nu)(y)\right|\lesssim\left\Vert y\right\Vert _{F}^{-\delta}\}.\label{eq:Fourier invariant}
\end{equation}
The study of the invariant $\delta_{\star}(\mu)$, and variations
of it, goes back at least to the 1920's, when the classical van der
Corput lemma was introduced, relating lower bounds on the derivative
of a smooth function $f:\R\rightarrow\R$, to bounds as in (\ref{eq:Fourier invariant}),
see \cite[Proposition 2]{ste93}, and \cite{CCW99}. This invariant
was also studied extensively in Igusa's work \cite{Igu78} in the
case $\dim Y=1$; it is much less understood in high dimensions. 
\begin{rem}
\label{rem:change of coordinates}Note that $\epsilon_{\star}(\phi;x)$
and $\operatorname{lct}_{F}(\phi_{x};x)$ are preserved under analytic
changes of coordinates around $x$. On the other hand, $\delta_{\star}(\phi;x)$
and $k_{\star}(\phi;x)$ might depend on the choice of coordinate
system. For example, the map $\phi(x,y)=(x,x^{2}(1+y^{1000}))$ satisfies
$\epsilon_{\star}(\phi)=\frac{1}{999}$ (by Theorem \ref{thm:lowerbd}),
while $\delta_{\star}(\phi)=\frac{1}{2}$ and $k_{\star}(\phi)\leq4$.
By applying the change of coordinates $\psi(x,y)=(x,y-x^{2})$, we
get $\psi\circ\phi(x,y)=(x,x^{2}y^{1000})$, and still have $\epsilon_{\star}(\psi\circ\phi)=\frac{1}{999}$,
while $\delta_{\star}(\psi\circ\phi)=\frac{1}{1000}$ and $k_{\star}(\psi\circ\phi)\geq1000$. 
\end{rem}

We next discuss the relations between the different exponents. In
the one-dimensional case, it turns out that all of the exponents above
are essentially equivalent (whenever $\operatorname{lct}_{F}(\phi;x)\leq1$).
In order to explain this, we need to discuss the structure of pushforward
measures by analytic maps.

\subsubsection{\label{subsec:Asymptotic-expansions-of}Asymptotic expansions of
pushforward measures and their Fourier transform}

Let $F$ be a local field, $f:U\rightarrow F$ be a locally dominant
analytic map with $U\subseteq F^{n}$ an open set. Let $\mu\in\mathcal{M}_{c}^{\infty}(U)$,
and consider the pushforward measure $f_{*}\mu$. Fix a non-trivial
additive character $\Psi$ of $F$. We may identify between $F$ and
$F^{\vee}$ by $t\longleftrightarrow\Psi_{t}$ where $\Psi_{t}(y)=\Psi(ty)$.
The Fourier transform $\mathcal{F}(f_{*}\mu)$ can then be written
as 
\[
\mathcal{F}(f_{*}\mu)(t)=\int_{F}\Psi(ty)df_{*}\mu(y)=\int_{U}\Psi(t\cdot f(x))d\mu(x).
\]
To $\mu$ and $f$, one can further associates \emph{Igusa's local
zeta function} 
\[
Z_{\mu,f}(s):=\int_{U}\left|f(x)\right|_{F}^{s}d\mu(x),\,\,\,\,\,\,s\in\C,\,\,\,\,\operatorname{Re}(s)>0.
\]
Igusa's local zeta function admits a meromorphic continuation to the
complex plane (see \cite{BG69,Ati70,Ber72} for the Archimedean case,
and \cite{Igu74,Igu78} for the non-Archimedean case). The poles of
$Z_{\mu,f}(s)$ (as well as certain twisted versions of it), and the
Laurent expansions around them, controls the asymptotic expansions
for $f_{*}\mu$ as $\left|y\right|_{F}\rightarrow0$ and for $\mathcal{F}(f_{*}\mu)$
when $\left|t\right|_{F}\rightarrow\infty$, via the theory of Mellin
transform (see \cite[Theorems 4.2, 4.3 and 5.3]{Igu78}). We next
describe the asymptotic expansions of $f_{*}\mu$ and $\mathcal{F}(f_{*}\mu)$.
\begin{defn}[{{Asymptotic expansion, see \cite[Section I.2]{Igu78}}}]
Let $x_{\infty}$ be $0$ or $\infty$. A sequence $\{\varphi_{k}\}_{k\in\N}$
of complex-valued functions on an open subset $U$ of $F\in\{\R,\C\}$,
with $\varphi_{k}(x)\neq0$ in a punctured neighborhood of $x_{\infty}$,
is said to constitute an \emph{asymptotic scale}, if for every $k$,
\[
{\displaystyle \lim\limits _{x\to x_{\infty}}\frac{\varphi_{k+1}(x)}{\varphi_{k}(x)}=0}.
\]
A function $f:U\rightarrow\C$ is said to have an \emph{asymptotic
expansion} at $x_{\infty}$, if there exists a sequence $\{a_{n}\}_{n\in\N}$
of complex numbers such that for every $k\geq0$, there exists $C>0$
such that for all $x$ close enough to $x_{\infty}$: 
\[
\left|f(x)-\sum\limits _{i=0}^{k}a_{i}\varphi_{i}(x)\right|\leq C\cdot\varphi_{k+1}(x).
\]
In this case, we write 
\begin{equation}
f(x)\approx\sum\limits _{k=0}^{\infty}a_{k}\varphi_{k}(x)\text{ as }x\rightarrow x_{\infty}.\label{eq:asymptotic}
\end{equation}
\end{defn}

\begin{example}
Given a monotone increasing sequence $\Lambda=(-1<\lambda_{0}<\lambda_{1}<...<\lambda_{n}<...)$
of real numbers, with no finite accumulation points, and given a sequence
$\{m_{n}\}_{n\in\N}$ of positive integers, set $\varphi_{0},\varphi_{1},\varphi_{2},...$
to be the sequence: 
\[
x^{\lambda_{0}}\log(x)^{m_{0}-1},x^{\lambda_{0}}\log(x)^{m_{0}-2},...,x^{\lambda_{0}},x^{\lambda_{1}}\log(x)^{m_{1}-1},...,x^{\lambda_{1}},...
\]
for $x>0$. Then $\{\varphi_{k}\}_{k\in\N}$ is an asymptotic scale
at $x_{\infty}=0$. 
\end{example}

We now describe the asymptotic expansions of pushforward measures
and their Fourier transforms. We fix a local field $F$ of characteristic
$0$ and an analytic $F$-manifold $X$. If $F$ is non-Archimedean
we further fix a uniformizer $\varpi_{F}\in\mathcal{O}_{F}$. We set
$F_{1}^{\times}:=\{x\in F:\left|x\right|_{F}=1\}$ and denote by $\ac:F^{\times}\rightarrow F_{1}^{\times}$
the angular component map
\[
\ac(t)=\begin{cases}
t\varpi_{F}^{-\val(t)} & \text{if }F\text{ is non-Archimedean}\\
\frac{t}{\left|t\right|} & \text{if }F\text{ is Archimedean}.
\end{cases}
\]

\begin{thm}[{\cite{Jea70,Mal74,Igu78}, see also \cite[Section 4]{VZG17} and \cite[Theorem 1.3.2 and Corollary 1.4.5]{Den91a}}]
\label{thm:asymptotic expansion}Let $f:X\rightarrow F$ be a locally
dominant analytic map, let $\mu\in\mathcal{M}_{c}^{\infty}(X)$, and
write $f_{*}\mu=g(y)d\mu_{F}$. Suppose that $0$ is the only critical
value of $f$. Then there exist: 
\begin{itemize}
\item a sequence $\Lambda=\{\lambda_{k}\}_{k\geq0}$, consisting of strictly
increasing positive real numbers with $\lim\limits _{k\to\infty}\lambda_{k}=\infty$,
if $F\in\{\R,\C\}$, or a finite set of complex numbers $\lambda_{k}$,
with $\operatorname{Re}\lambda_{k}>0$, and $\operatorname{Im}\lambda_{k}\in\frac{2\pi}{N\ln(q_{F})}\N$
for some $N\in\N$, if $F$ is non-Archimedean; 
\item a sequence $\{m_{k}\}_{k\geq0}$ of positive integers; 
\item smooth functions $a_{k,m,\mu},b_{k,m,\mu}$ on $F_{1}^{\times}$, 
\end{itemize}
such that: 
\begin{enumerate}
\item For $F\in\{\R,\C\}$, $g(y)$ admits an asymptotic expansion\footnote{The asymptotic expansion is also \emph{termwise differentiable,} and
\emph{uniform} in the angular component; we refer to \cite[p. 19-24]{Igu78}
for the precise meaning of those notions.} of the form 
\begin{equation}
g(y)\approx\sum_{k\geq0}\sum\limits _{m=1}^{m_{k}}a_{k,m,\mu}(\ac(y))\cdot\left|y\right|_{F}^{\lambda_{k}-1}(\log\left|y\right|_{F})^{m-1},\left|y\right|_{F}\rightarrow0,\label{eq:asympg}
\end{equation}
and $\mathcal{F}(f_{*}\mu)$ admits an asymptotic expansion of the
form, 
\begin{equation}
\mathcal{F}(f_{*}\mu)(t)\approx\sum_{k\geq0}\sum\limits _{m=1}^{m_{k}}b_{k,m,\mu}(\ac(t))\cdot\left|t\right|_{F}^{-\lambda_{k}}(\log\left|t\right|_{F})^{m-1},\left|t\right|_{F}\rightarrow\infty.\label{eq:Fourierasymp}
\end{equation}
\item For $F$ non-Archimedean, $g(y)$ and $\mathcal{F}(f_{*}\mu)$ admits
an expansion as in (\ref{eq:asympg}) and (\ref{eq:Fourierasymp}),
where \guillemotleft$\approx$\guillemotright{} is replaced with \guillemotleft$=$\guillemotright ,
and both sums are finite. 
\item For each $\lambda_{k}$, the functions $(b_{k,m,\mu})_{m=1}^{m_{k}}$
are determined by the functions $(a_{k,m,\mu})_{m=1}^{m_{k}}$. If
$\operatorname{Re}\lambda_{k}<1$, the map taking the latter to the
former is one-to-one, and, moreover, the leading function $b_{k,m_{k},\mu}$
is not identically zero (provided that $m_{k}$ is defined so that
$a_{k,m_{k},\mu}$ is not identically $0$). 
\end{enumerate}
\end{thm}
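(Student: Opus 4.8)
The plan is to reduce everything to an explicit monomial computation via Hironaka's embedded resolution (Theorem~\ref{thm:analytic resolution of sing}), and to route the analysis through the Mellin transform, which simultaneously controls the density $g$, its Fourier transform $\mathcal{F}(f_{*}\mu)$, and Igusa's local zeta function $Z_{\mu,f}(s)=\int_{X}|f(x)|_{F}^{s}\,d\mu(x)$. Since $f_{*}\mu=g(y)\,d\mu_{F}$ we have $Z_{\mu,f}(s)=\int_{F}|y|_{F}^{s}g(y)\,d\mu_{F}(y)$, so after decomposing over the characters $\chi$ of the compact group $F_{1}^{\times}$, the $\chi$-component of $Z_{\mu,f}$ is a shift of the Mellin transform of the corresponding component of $g$; and, by Tate's local functional equation, it is also (up to a local gamma factor $\gamma_{F}(s,\chi)$) the Mellin transform of the matching component of $\mathcal{F}(f_{*}\mu)$. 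Thus the strategy is: (i) establish the meromorphic continuation of $Z_{\mu,f}$ and locate its poles and their orders; (ii) read off $\Lambda$ and $\{m_{k}\}$; (iii) invert the Mellin transform to produce (\ref{eq:asympg}) and (\ref{eq:Fourierasymp}); (iv) use the explicit form of $\gamma_{F}(s,\chi)$ to compare the two families of coefficients and deduce item (3).

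For (i)--(ii): since $0$ is the only critical value, $g$ is already smooth away from $y=0$, and after shrinking $\operatorname{supp}\mu$ and using a partition of unity we apply Theorem~\ref{thm:analytic resolution of sing} to $J=\langle f\rangle$; by Remark~\ref{rem:more on Hironakas therem}(2) we may write $f\circ\pi=C\,\widetilde{x}_{1}^{a_{1}}\cdots\widetilde{x}_{m}^{a_{m}}$ and $\operatorname{Jac}_{\widetilde{x}}(\pi)=v(\widetilde{x})\,\widetilde{x}_{1}^{b_{1}}\cdots\widetilde{x}_{m}^{b_{m}}$ in suitable charts. Pulling $Z_{\mu,f}(s)$ through $\pi$ turns it into a finite sum of monomial integrals $\int|\widetilde{x}_{1}|_{F}^{a_{1}s+b_{1}}\cdots|\widetilde{x}_{m}|_{F}^{a_{m}s+b_{m}}\,h(\widetilde{x})\,d\mu_{F}^{n}$ with $h\in C_{c}^{\infty}$; expanding $h$ in the first $m$ variables (Taylor expansion in the Archimedean case, locally-constant expansion in the non-Archimedean case) and using the one-variable integrals $\int_{|t|_{F}\le 1}|t|_{F}^{w}\,d\mu_{F}=(1-q_{F}^{-1-w})^{-1}$ (resp.\ a beta-integral) reduces each to a product of one-dimensional factors with at most simple poles. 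Hence $Z_{\mu,f}$ continues meromorphically with poles of order $\le m\le n$ along the arithmetic progressions $s=-(b_{i}+1+\ell)/a_{i}$, $\ell\in\Z_{\ge 0}$ (the complex case producing the doubled progression, consistently with $|\cdot|_{\C}=|\cdot|^{2}$). Collecting $-\operatorname{Re}s$ over the poles gives the strictly increasing $\Lambda=\{\lambda_{k}\}$ (a finite set in the non-Archimedean case, where $Z_{\mu,f}$ is rational in $q_{F}^{-s}$), and $m_{k}$ is the order of the corresponding pole; $\operatorname{Re}\lambda_{k}>0$ because $Z_{\mu,f}(s)$ converges for $\operatorname{Re}s\ge 0$, while Lemma~\ref{Lemma:F-lct} pins the rightmost pole at $s=-\operatorname{lct}_{F}(f;x)<0$.

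For (iii)--(iv): in the Archimedean case, for each $\chi$ shift the Mellin inversion contour for the $\chi$-component of $g$ to the left past the poles of $Z_{\mu,f}$, picking up residues and estimating the remaining integral; summing over $\chi$ yields (\ref{eq:asympg}), with $a_{k,m,\mu}(\ac(y))$ built from the Laurent coefficients of $Z_{\mu,f}$ at its poles (rapid decay of these in $\chi$ giving smoothness on $F_{1}^{\times}$), and termwise differentiability and uniformity in $\ac$ follow by differentiating under the integral sign, as in \cite[pp.~19--24]{Igu78}. The same inversion applied to $\mathcal{F}(f_{*}\mu)$, whose $\chi$-component has Mellin transform $\gamma_{F}(s,\chi)$ times that of the $\chi$-component of $g$, gives (\ref{eq:Fourierasymp}) and realizes the map $(a_{k,m,\mu})\mapsto(b_{k,m,\mu})$ as multiplication by the Laurent expansion of $\gamma_{F}(s,\chi)$ at $s=\lambda_{k}$ followed by re-expansion in powers of $\log|t|_{F}$; the non-Archimedean case is the same with contour integration replaced by a partial-fraction decomposition in $q_{F}^{-s}$, giving exact finite expansions. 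For item (3): when $\operatorname{Re}\lambda_{k}<1$ the gamma factor $\gamma_{F}(s,\chi)$ is holomorphic and nonvanishing at $s=\lambda_{k}$ (its zeros and poles lie on the shifted lattices $\operatorname{Re}s\in\{0,1\}+\Z$ determined by $F$ and $\chi$), so on each $\lambda_{k}$-block the map is invertible and preserves the degree of the $\log$-polynomial, forcing $b_{k,m_{k},\mu}\not\equiv 0$ and the asserted injectivity.

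The main obstacle I expect is step (iii): turning the pole structure of $Z_{\mu,f}$ into a genuine asymptotic expansion in a fully rigorous way---controlling the shifted-contour error uniformly in the angular variable, and, in the Archimedean case, justifying the termwise-differentiable and angularly-uniform refinements---together with bookkeeping the precise shape of $\gamma_{F}(s,\chi)$ (in particular the complex case, where $|\cdot|_{\C}$ is a square and $\chi$ runs over $z\mapsto(z/|z|)^{j}$). These are exactly the analytic core of Igusa's and Malgrange's treatments, so in practice I would invoke \cite{Igu78,Mal74} and \cite[Section~4]{VZG17} for them rather than redo them.
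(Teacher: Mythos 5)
The paper does not give its own proof of this statement: Theorem~\ref{thm:asymptotic expansion} is cited from Jeanquartier, Malgrange, Igusa, and the survey of Veys--Z\'u\~niga-Galindo, with Remark~\ref{rem:asymptotic expansion}(2) only noting that the original polynomial-map treatment extends to analytic maps via the analytic form of resolution of singularities (Theorem~\ref{thm:analytic resolution of sing}). Your proposal accurately reproduces the classical route that those references take: resolve the principal ideal $\langle f\rangle$, reduce to monomial integrals, deduce the meromorphic continuation and pole structure of the (character-twisted) local zeta function $Z_{\mu,f}(s,\chi)$, recover (\ref{eq:asympg}) and (\ref{eq:Fourierasymp}) by Mellin inversion, and use Tate's local functional equation together with the holomorphy and nonvanishing of the local gamma factors $\gamma_F(s,\chi)$ on the strip $0<\operatorname{Re}s<1$ to pass between the two coefficient families, giving item (3). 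Your identification of the candidate poles at $s=-(b_i+1+\ell)/a_i$, the bound on their orders, the finiteness and rationality in $q_F^{-s}$ in the non-Archimedean case, and the observation that $\operatorname{Re}\lambda_k>0$ with the rightmost pole at $-\operatorname{lct}_F(f;x)$ are all correct. You also correctly defer the genuinely technical points --- uniformity in the angular variable, termwise differentiability, and the careful $\C$-bookkeeping (with $|\cdot|_\C$ the squared modulus and $\chi$ running over $z\mapsto(z/|z|)^j$) --- to \cite{Igu78,Mal74,VZG17}, which is exactly what the paper itself does by citation. So your proposal is sound, and consistent with how the paper handles this result.
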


\begin{rem}
\label{rem:asymptotic expansion}~ 
\begin{enumerate}
\item The maps $(a_{k,m,\mu})_{m=1}^{m_{k}}\mapsto(b_{k,m,\mu})_{m=1}^{m_{k}}$
of item (3) can be explicitly described; see \cite[Proposition 4.6]{VZG17}
and \cite[Section 2.2]{Igu78}. 
\item Note that Igusa's theory (and in particular Theorem \ref{thm:asymptotic expansion})
was originally developed for polynomial maps but works for analytic
maps as well. Indeed, the proof uses resolution of singularities to
reduce to the case of pushforward of measures with monomial density
by monomial maps. The same reduction can be made for analytic maps
via an analytic version of resolution of singularities (as stated
in Theorem \ref{thm:analytic resolution of sing}). For a generalization
of Theorem \ref{thm:asymptotic expansion} to the case of meromorphic
maps, see \cite[Section 5]{VZG17}. 
\end{enumerate}
\end{rem}

We record the following immediate corollary of Equation (\ref{eq:asympg})
in Theorem \ref{thm:asymptotic expansion}: 
\begin{cor}
\label{cor:notach}Let $f:X\to F$ be a locally dominant analytic
map, let $\mu\in\mathcal{M}_{c}^{\infty}(X)$, and suppose that $\epsilon_{\star}(f_{*}\mu)<\infty$.
Then $f_{*}\mu\notin\mathcal{M}_{c,1+\epsilon_{\star}(f_{*}\mu)}(F)$,
i.e.\ the supremum in the definition of $\epsilon_{\star}$ is not
achieved. 
\end{cor}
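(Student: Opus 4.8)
The claim, \textbf{Corollary \ref{cor:notach}}, asserts that for a locally dominant analytic $f:X\to F$ and $\mu\in\mathcal{M}_c^\infty(X)$ with $\epsilon_\star(f_*\mu)<\infty$, the pushforward $f_*\mu$ fails to lie in $\mathcal{M}_{c,1+\epsilon_\star(f_*\mu)}(F)$, so the supremum defining $\epsilon_\star$ is a genuine maximum that is \emph{not} attained.

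\medskip

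The plan is to read off the conclusion directly from the asymptotic expansion in Theorem \ref{thm:asymptotic expansion}. First I would reduce to the case where $0$ is the only critical value of $f$: by compactness of $\operatorname{supp}\mu$ the set of critical values of $f$ on a neighbourhood of the support is finite, and $f_*\mu$ is a finite sum of measures each supported near a single critical value; since $\epsilon_\star$ of a finite sum of measures is the minimum of the individual $\epsilon_\star$'s, and membership in $\mathcal{M}_{c,q}$ is likewise governed by the worst summand, it suffices to treat a single critical value, which after translating we may take to be $0$. Away from critical values $f$ is a submersion and the local density is smooth, contributing nothing to non-integrability.

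\medskip

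Next, write $f_*\mu = g(y)\,d\mu_F$ and invoke Theorem \ref{thm:asymptotic expansion}(1) (Archimedean) or (2) (non-Archimedean): $g(y)$ has an asymptotic expansion $g(y)\approx\sum_{k\ge0}\sum_{m=1}^{m_k} a_{k,m,\mu}(\operatorname{ac}(y))\,|y|_F^{\lambda_k-1}(\log|y|_F)^{m-1}$ as $|y|_F\to0$, with $0<\lambda_0<\lambda_1<\cdots$ and each $a_{k,m,\mu}$ not identically zero (for the defined $m_k$). The $L^q$-integrability of $g$ near $0$ on $F$ (a one-dimensional space, so $d\mu_F\sim dr$ near $0$ in the radial variable, with the angular component ranging over the compact $F_1^\times$) is dictated by the leading exponent: $g\in L^q$ locally iff $q(\lambda_0-1)>-1$, i.e. iff $1+\epsilon=q<\tfrac{1}{1-\lambda_0}=1+\tfrac{\lambda_0}{1-\lambda_0}$ when $\lambda_0<1$ (and $g$ is locally bounded, hence in all $L^q$, when $\lambda_0\ge1$, contradicting $\epsilon_\star<\infty$). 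Thus $\epsilon_\star(f_*\mu)<\infty$ forces $\lambda_0<1$ and $\epsilon_\star(f_*\mu)=\tfrac{\lambda_0}{1-\lambda_0}$, i.e. $1+\epsilon_\star(f_*\mu)=\tfrac{1}{1-\lambda_0}$. At this critical exponent the leading term $a_{0,m_0,\mu}(\operatorname{ac}(y))\,|y|_F^{\lambda_0-1}(\log|y|_F)^{m_0-1}$ is \emph{not} in $L^{1+\epsilon_\star}$ near $0$: in the Archimedean case $\int_0^\delta r^{(\lambda_0-1)(1+\epsilon_\star)}(\log r)^{(m_0-1)(1+\epsilon_\star)}\,dr=\int_0^\delta r^{-1}|\log r|^{(m_0-1)(1+\epsilon_\star)}\,dr$ diverges (the power of $\log$ being $\ge0$); in the non-Archimedean case the analogous dyadic sum $\sum_j q_F^{j(\lambda_0-1)(1+\epsilon_\star)}j^{(m_0-1)(1+\epsilon_\star)}=\sum_j j^{(m_0-1)(1+\epsilon_\star)}$ diverges likewise. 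One must check that subtracting the leading term leaves a remainder in $L^{1+\epsilon_\star}$ near $0$ — this is exactly the defining estimate of the asymptotic expansion, which bounds $g(y)-a_{0,m_0,\mu}(\operatorname{ac}(y))|y|_F^{\lambda_0-1}(\log|y|_F)^{m_0-1}$ by the next term in the scale, of strictly smaller order (either a smaller power of $\log$ with the same $\lambda_0$, or a larger $\lambda_1$), which is in $L^{1+\epsilon_\star}$; since the angular variable lives on a compact set and the $a_{k,m,\mu}$ are bounded, these estimates integrate cleanly. Hence $g\notin L^{1+\epsilon_\star}$ near $0$, so $f_*\mu\notin\mathcal{M}_{c,1+\epsilon_\star(f_*\mu)}(F)$.

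\medskip

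The main obstacle is bookkeeping rather than conceptual: one has to be careful that the leading term is genuinely non-integrable at the endpoint exponent even when the logarithmic multiplicity $m_0$ equals $1$ (then the power of $\log$ is $0$ and the integrand is exactly $r^{-1}$, still divergent), and that the angular functions $a_{k,m,\mu}$, while possibly vanishing on parts of $F_1^\times$, are by construction not identically zero, so the integral over the compact $F_1^\times$ of $|a_{0,m_0,\mu}|^{1+\epsilon_\star}$ is a strictly positive constant; these are the only points requiring a word of justification, and all of them are covered by the precise statement of Theorem \ref{thm:asymptotic expansion}.
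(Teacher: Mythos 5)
Your approach is the one the paper intends (the paper just calls the statement an ``immediate corollary'' of (\ref{eq:asympg}), and the detailed reasoning you spell out is exactly what it leaves implicit, as one can confirm by comparing with the argument written out in the proof of Corollary~\ref{cor:connection between epsilon and delta}). The reduction to a single critical value, the identification $1+\epsilon_\star = \tfrac{1}{1-\operatorname{Re}\lambda_0}$, and the observation that the leading term diverges at the endpoint exponent for any logarithmic multiplicity $m_0\ge 1$ are all correct.

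However, there is an error in the step where you justify that the leading term alone governs non-integrability. You assert that ``subtracting the leading term leaves a remainder in $L^{1+\epsilon_\star}$ near $0$'', arguing that the next term in the asymptotic scale --- ``either a smaller power of $\log$ with the same $\lambda_0$, or a larger $\lambda_1$'' --- lies in $L^{1+\epsilon_\star}$. This is false in the first case: if $m_0\ge 2$, the next term is $|y|_F^{\lambda_0-1}|\log|y|_F|^{m_0-2}$, and at $q=1+\epsilon_\star=\tfrac{1}{1-\operatorname{Re}\lambda_0}$ one gets $\int_0^\delta r^{-1}|\log r|^{(m_0-2)q}\,dr$, which still diverges since $(m_0-2)q\ge 0$. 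So the ``remainder $\in L^q$, leading term $\notin L^q$, hence $g\notin L^q$'' syllogism does not go through as stated. The fix is either to truncate the expansion past \emph{all} log powers at $\lambda_0$ (i.e.\ keep $\sum_{m=1}^{m_0} a_{0,m,\mu}\,|y|_F^{\lambda_0-1}(\log|y|_F)^{m-1}$ as the principal part, whose modulus is comparable to the top $\log$ power on the set where $a_{0,m_0,\mu}$ is bounded away from zero), or more directly to use the asymptotic relation $g(y)\ge\tfrac12\,|a_{0,m_0,\mu}(\ac(y))|\,|y|_F^{\operatorname{Re}\lambda_0-1}|\log|y|_F|^{m_0-1}$ valid for small $|y|_F$ on a fixed open subset of $F_1^\times$ where the angular coefficient is bounded below, and conclude divergence from this pointwise lower bound. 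Either repair leaves your overall argument intact. (A secondary, purely notational, point: in the non-Archimedean case the exponents $\lambda_k$ are complex, so one should work with $\operatorname{Re}\lambda_0$ throughout, as the paper does in Corollary~\ref{cor:connection between epsilon and delta}.)
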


Theorem \ref{thm:asymptotic expansion} also implies the following
corollary, relating $\epsilon_{\star}$ to $\delta_{\star}$: 
\begin{cor}
\label{cor:connection between epsilon and delta}Let $\phi:X\rightarrow F$
be a locally dominant analytic map such that $0$ is the only critical
value. Then for each $\mu\in\mathcal{M}_{c}^{\infty}(X)$ 
\begin{equation}
\epsilon_{\star}(\phi_{*}\mu)=\begin{cases}
\infty & \text{if }\delta_{\star}(\phi_{*}\mu)\geq1,\\
\frac{\delta_{\star}(\phi_{*}\mu)}{1-\delta_{\star}(\phi_{*}\mu)} & \text{if }\delta_{\star}(\phi_{*}\mu)<1.
\end{cases}\label{eq:epsilon and delta}
\end{equation}
\end{cor}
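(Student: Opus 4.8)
The plan is to read off both $\epsilon_{\star}(\phi_{*}\mu)$ and $\delta_{\star}(\phi_{*}\mu)$ from the asymptotic expansions provided by Theorem \ref{thm:asymptotic expansion}, and then observe that the two invariants are governed by the \emph{same} exponent $\lambda_0$ appearing in both expansions, at least in the relevant range. Write $f_{*}\mu = g(y)\,d\mu_F$ with $f=\phi$. Since $0$ is the only critical value, the density $g$ is smooth away from $0$, so any failure of $g$ to lie in $L^{1+\epsilon}$ locally, and any slow decay of $\mathcal{F}(\phi_{*}\mu)$, is dictated entirely by the behaviour near $y=0$ (resp.\ $|t|_F\to\infty$), which is exactly what (\ref{eq:asympg}) and (\ref{eq:Fourierasymp}) describe.

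First I would pin down $\delta_{\star}$. By (\ref{eq:Fourierasymp}), $\mathcal{F}(\phi_{*}\mu)(t)$ behaves like $|t|_F^{-\lambda_0}(\log|t|_F)^{m_0-1}$ times a nonvanishing angular factor to leading order — here I use item (3) of Theorem \ref{thm:asymptotic expansion}, which guarantees, when $\operatorname{Re}\lambda_0<1$, that the leading coefficient function $b_{0,m_0,\mu}$ is not identically zero, so there is no cancellation and the decay rate is genuinely $\lambda_0$; the logarithmic factor only costs us an arbitrarily small $\eta>0$, so $\delta_{\star}(\phi_{*}\mu)=\lambda_0$ when $\lambda_0<1$ (in the non-Archimedean case there is no log factor and the statement is cleaner). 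When $\lambda_0\ge 1$ every term in the expansion decays at least like $|t|_F^{-1+\eta}$, so $\delta_{\star}(\phi_{*}\mu)\ge 1$, which is the first branch of (\ref{eq:epsilon and delta}). Next I would pin down $\epsilon_{\star}$ from (\ref{eq:asympg}): the leading singular behaviour of $g$ near $0$ is $|y|_F^{\lambda_0-1}(\log|y|_F)^{m_0-1}$, and $|y|_F^{\lambda_0-1}\in L^{1+\epsilon}_{\mathrm{loc}}$ near $0$ iff $(\lambda_0-1)(1+\epsilon)>-1$, i.e.\ iff $\epsilon<\frac{\lambda_0}{1-\lambda_0}$ (when $\lambda_0<1$), and all higher terms in the expansion are more integrable; the log factor is harmless as before, and the nonvanishing of the leading coefficient $a_{0,m_0,\mu}$ (which holds by the way $m_0$ is defined) shows the bound is not improvable. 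Hence $\epsilon_{\star}(\phi_{*}\mu)=\frac{\lambda_0}{1-\lambda_0}=\frac{\delta_{\star}(\phi_{*}\mu)}{1-\delta_{\star}(\phi_{*}\mu)}$ when $\lambda_0<1$, and $\epsilon_{\star}(\phi_{*}\mu)=\infty$ when $\lambda_0\ge 1$; combining the two cases gives (\ref{eq:epsilon and delta}).

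One must be slightly careful about the regime $\lambda_0\ge 1$: here item (3) of Theorem \ref{thm:asymptotic expansion} does not assert nonvanishing of the leading Fourier coefficient, so we only get $\delta_{\star}(\phi_{*}\mu)\ge 1$ rather than an equality — but that is all (\ref{eq:epsilon and delta}) requires, since both branches then assert $\epsilon_{\star}(\phi_{*}\mu)=\infty$, and the latter follows directly from the density side (all exponents $\lambda_k-1\ge 0$ so $g$ is bounded near $0$, up to logs, giving $g\in\bigcap_{q>1}L^q_{\mathrm{loc}}$). Similarly, in the Archimedean case one should note that the asymptotic expansions are only genuine asymptotic series (the symbol ``$\approx$''), but since they are termwise differentiable and uniform in the angular component, truncating after finitely many terms gives two-sided bounds on $g$ near $0$ and on $\mathcal{F}(\phi_{*}\mu)$ near infinity of the stated orders up to $|y|_F^{\eta}$, $|t|_F^{-\eta}$ respectively, which suffices.

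The main obstacle is making the ``leading term controls everything'' reasoning rigorous in the Archimedean case, i.e.\ converting the asymptotic expansion into honest upper and lower $L^p$ bounds (for the $\epsilon_{\star}$ side) and honest pointwise decay bounds (for the $\delta_{\star}$ side) while correctly handling the $\log$ powers and the angular integration; and verifying that no delicate cancellation in the angular variable can spoil the lower bounds — this is precisely where one invokes the non-vanishing of the leading coefficient functions $a_{0,m_0,\mu}$ and $b_{0,m_0,\mu}$ from item (3) of Theorem \ref{thm:asymptotic expansion} together with $\mu$ being a positive measure (so that $g\ge 0$ and the leading coefficient is $\not\equiv 0$ of constant sign on $F_1^\times$, or at least large on a set of positive measure).
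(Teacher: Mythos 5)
Your proposal is correct and follows the paper's own proof: both read the leading exponent $\lambda_{k_0}$ (your $\lambda_0$) out of the asymptotic expansions of Theorem~\ref{thm:asymptotic expansion}, deduce $\delta_\star(\phi_*\mu)=\operatorname{Re}\lambda_{k_0}$ when this is $<1$ (using the non-vanishing in item~(3)), compute $\epsilon_\star$ from the integrability of $|y|_F^{\lambda_{k_0}-1}$, and handle $\operatorname{Re}\lambda_{k_0}\geq 1$ by noting both invariants then give the first branch. You are if anything more careful than the paper about the $\operatorname{Re}\lambda_{k_0}>1$ case and the passage from asymptotic series to two-sided bounds in the Archimedean case, but the argument is the same.
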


\begin{proof}
Write $\phi_{*}\mu=g(y)d\mu_{F}$. If $F=\R$, then by Theorem \ref{thm:asymptotic expansion},
$g(y)$ can be expanded near $y=0$, so that the leading term is $\left|y\right|_{F}^{\lambda_{k_{0}}-1}(\log\left|y\right|_{F})^{m_{k_{0}}-1}$
for some $\lambda_{k_{0}}\in\R_{>0}$ and $m_{k_{0}}\in\N$. Similarly,
$\mathcal{F}(\phi_{*}\mu)$ can be expanded near $t=\infty$. If $\lambda_{k_{0}}<1$,
then by Item (3) of Theorem \ref{thm:asymptotic expansion}, $\left|y\right|_{F}^{-\lambda_{k_{0}}}(\log\left|y\right|_{F})^{m_{k_{0}}-1}$
is the leading term of $\mathcal{F}(\phi_{*}\mu)$, and thus $\delta_{\star}(\phi_{*}\mu)=\lambda_{k_{0}}$.
If $\lambda_{k_{0}}=1$, then Item (3) implies that $\delta_{\star}(\phi_{*}\mu)\geq1$. 

Since $0$ is the only critical value of $\phi$, $g(y)$ is bounded
outside any neighborhood of $0$, so $g(y)^{1+\epsilon}$ is integrable
if and only if it is integrable in a small ball $B$ around $0$,
and this holds if and only if $(\lambda_{k_{0}}-1)(1+\epsilon)>-1$,
i.e.\ either $\lambda_{k_{0}}\geq1$ and then $\epsilon_{\star}(\phi_{*}\mu)=\infty$,
or $\epsilon<\frac{\lambda_{k_{0}}}{1-\lambda_{k_{0}}}=\frac{\delta_{\star}(\phi_{*}\mu)}{1-\delta_{\star}(\phi_{*}\mu)}$. 

The case when $F$ is non-Archimedean should be done with care, since
there might be multiple terms in (\ref{eq:asympg}) with the same
real part, and some cancellations may occur (see Example \ref{exa:cencalations}
below). Let $\lambda:=\underset{k}{\min}\operatorname{Re}\lambda_{k}$
and suppose $\lambda<1$. Then by Theorem \ref{thm:asymptotic expansion},
$g(y)$ can be written as $g(y)=g_{\mathrm{lead}}(y)+g_{\mathrm{error}}(y)$,
where
\[
g_{\mathrm{lead}}(y):=\sum_{m}\sum_{j=0}^{N-1}\widetilde{a}_{j,m,\mu}(\ac(y))\cdot\left|y\right|_{F}^{\lambda+\frac{2\pi ij}{N\ln(q_{F})}-1}(\log\left|y\right|_{F})^{m-1},
\]
$\widetilde{a}_{j,m,\mu}(\ac(y))$ is equal to $a_{k_{j},m,\mu}(\ac(y))$
if $\lambda_{k_{j}}=\lambda+\frac{2\pi ij}{N\ln(q_{F})}\in\Lambda$
and $0$ otherwise, and furthermore,
\[
g_{\mathrm{error}}(y):=\sum_{k:\operatorname{Re}\lambda_{k}>\lambda}\sum\limits _{m=1}^{m_{k}}a_{k,m,\mu}(\ac(y))\cdot\left|y\right|_{F}^{\lambda_{k}-1}(\log\left|y\right|_{F})^{m-1}.
\]
We first show there is an arithmetic progression $I_{a,N,l}:=\{a+bN\}_{l<b\in\N}$
for some $a,l\in\N$, and $y_{0}\in F_{1}^{\times}$, such that 
\begin{equation}
\left|g_{\mathrm{lead}}(y)\right|>C_{F}\left|y\right|_{F}^{\lambda-1},\label{eq:g is large on a large set}
\end{equation}
for all $y\in S_{a,N,l,d,y_{0}}:=\left\{ z\in\mathcal{O}_{F}:\val(\ac(z)-y_{0})\geq d,\,\val(z)\in I_{a,N,l}\right\} $,
for some constant $C_{F}$ independent of $y$. It is enough to show
(\ref{eq:g is large on a large set}) for the terms in $g_{\mathrm{lead}}(y)$
where $m=m_{0}$ is maximal such that for some $0\leq j\leq N-1$,
$\widetilde{a}_{j,m,\mu}(\ac(y))$ is not identically zero. Note that
\[
\left|y\right|_{F}^{\frac{2\pi ij}{N\ln(q_{F})}}=q_{F}^{-\val(y)\frac{2\pi ij}{N\ln(q_{F})}}=e^{-\ln(q_{F})\val(y)\frac{2\pi ij}{N\ln(q_{F})}}=e^{-\val(y)\frac{2\pi ij}{N}}.
\]
Let $y_{0}\in F_{1}^{\times}$ be such that $\widetilde{a}_{j,m_{0},\mu}(y_{0})\neq0$
for some $0\leq j\leq N-1$. Choose $d\in\N$ such that each of $\widetilde{a}_{0,m_{0},\mu}(z),\dots,\widetilde{a}_{0,m_{0},\mu}(z)$
is constant on the ball $\val(z-y_{0})\geq d$. Note that the functions
$f_{0},...,f_{N-1}:\Z/N\Z\rightarrow\C$, $f_{j}(t)=e^{-t\frac{2\pi ij}{N}}$
are the irreducible characters of $\Z/N\Z$ and hence they are linearly
independent. In particular, there exists $a\in\N$, such that
\[
\left|\sum_{j=0}^{N-1}\widetilde{a}_{j,m_{0},\mu}(y_{0})e^{-t\frac{2\pi ij}{N}}\right|=C_{F}>0,
\]
for all $t\in I_{a,N,0}$ . Taking $y\in S_{a,N,l,d,y_{0}}$ for $l\gg1$,
we deduce (\ref{eq:g is large on a large set}) as required. 

Finally, by Item (3) of Theorem \ref{thm:asymptotic expansion} and
by an argument similar to the one above, it follows that $\delta_{\star}(\phi_{*}\mu)=\lambda$.
Hence, (\ref{eq:g is large on a large set}) implies that $\int_{S_{a,N,l,d,y_{0}}}g(y)^{1+\epsilon}d\mu_{F}$
diverges for every $\epsilon\geq\frac{\lambda}{1-\lambda}=\frac{\delta_{\star}(\phi_{*}\mu)}{1-\delta_{\star}(\phi_{*}\mu)}$.
On the other hand, a similar argument as in the case $F=\R$ shows
that $\int_{F}g(y)^{1+\epsilon}d\mu_{F}$ converges for $\epsilon<\frac{\lambda}{1-\lambda}$,
so the corollary follows. 
\end{proof}
In the proof of Corollary \ref{cor:connection between epsilon and delta}
we have seen there might be some cancellations between the terms in
(\ref{eq:asympg}) with the same real part, and that these cancellations
are insignificant for infinitely many values of $\val(y)$. Here is
a simple example which illustrates this phenomenon. 
\begin{example}
\label{exa:cencalations}Let $d\in\N$ and let $p>d$ be a prime.
Let $\phi:\Qp\rightarrow\Qp$ be the map $\phi(x)=x^{d}$. Write $\phi_{*}\mu_{\Zp}=g(y)\mu_{\Zp}$.
Then for almost all $y\in\Zp$ we have
\[
g(y)=\#\phi^{-1}(y)\cdot\left|y\right|_{p}^{-1+\frac{1}{d}}=a(\ac(y))1_{\{z\in\Zp:d|\val(z)\}}(y)\cdot\left|y\right|_{p}^{-1+\frac{1}{d}}.
\]
where $a(z):=\#\{x\in\Zp:x^{d}=z\}$. Note that by Schur orthogonality,
$\sum_{j=0}^{d-1}e^{-t\frac{2\pi ij}{d}}=d$ if $d|t$ and is $0$
if $d\nmid t\in\N$. In particular
\[
1_{\{z\in\Zp:d|\val(z)\}}(y)=\frac{1}{d}\sum_{j=0}^{d-1}e^{-\val(y)\frac{2\pi ij}{d}}=\frac{1}{d}\sum_{j=0}^{d-1}\left|y\right|_{p}^{\frac{2\pi ij}{d\ln(q_{F})}},
\]
and therefore, the expansion of $g(y)$ as in (\ref{eq:asympg}) is
\[
g(y)=\sum_{j=0}^{d-1}\frac{a(\ac(y))}{d}\left|y\right|_{p}^{-1+\frac{1}{d}+\frac{2\pi ij}{d\ln(q_{F})}}.
\]
\end{example}

\subsubsection{\label{subsec:Relations-between-the}Relations between the invariants}

We now show that $\epsilon_{\star}(\phi;x),\delta_{\star}(\phi;x)$
and $k_{\star}(\phi;x)$ are essentially determined by the log-canonical
threshold $\operatorname{lct}_{F}(\phi_{x};x)$ whenever $\operatorname{lct}_{F}(\phi;x)\leq1$.
In particular, we show that 
\[
\epsilon_{\star}(\phi;x),\delta_{\star}(\phi;x),\frac{1}{k_{\star}(\phi;x)}\text{ and }\operatorname{lct}_{F}(\phi_{x};x)
\]
are asymptotically equivalent as $\operatorname{lct}_{F}(\phi_{x};x)\rightarrow0$.

Theorem \ref{thm:one dimensional} already shows that $\epsilon_{\star}(\phi;x)=\frac{\operatorname{lct}_{F}(\phi_{x};x)}{1-\operatorname{lct}_{F}(\phi_{x};x)}$
if $\operatorname{lct}_{F}(\phi_{x};x)<1$. We further have the following: 
\begin{prop}
\label{prop:Thom--Sebastiani}Let $\phi:X\rightarrow F$ be a dominant
$F$-analytic map. Then: 
\begin{enumerate}
\item If $\operatorname{lct}_{F}(\phi_{x};x)<1$, then $\delta_{\star}(\phi;x)=\operatorname{lct}_{F}(\phi_{x};x)$.
In particular, 
\[
\operatorname{lct}_{F}(\phi_{x}*\phi_{y};(x,y))\,\begin{cases}
=\operatorname{lct}_{F}(\phi_{x};x)+\operatorname{lct}_{F}(\phi_{y};y)~, & \operatorname{lct}_{F}(\phi_{x};x)+\operatorname{lct}_{F}(\phi_{y};y)<1,\\
\geq1~, & \text{otherwise}.
\end{cases}
\]
\item We have: 
\[
\left\lceil \frac{1}{\operatorname{lct}_{F}(\phi_{x};x)}\right\rceil \leq k_{\star}(\phi;x)\leq\left\lfloor \frac{1}{\operatorname{lct}_{F}(\phi_{x};x)}\right\rfloor +1.
\]
\end{enumerate}
\end{prop}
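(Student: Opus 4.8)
The plan is to derive both parts from the one-dimensional formula (Theorem~\ref{thm:one dimensional}), the comparison between $\epsilon_{\star}$ and $\delta_{\star}$ (Corollary~\ref{cor:connection between epsilon and delta}), and Young's inequality (\ref{eq:via-young}). Fix $x$ and write $\phi_{x}=\phi-\phi(x)$. Since $\phi$ is locally constant on each connected component of its critical locus, its set of critical values is discrete, so after shrinking $X$ around $x$ we may assume $\phi(x)=0$ is the only critical value of $\phi$; then Theorem~\ref{thm:asymptotic expansion} and Corollary~\ref{cor:connection between epsilon and delta} apply to $\phi$ restricted to any neighbourhood of $x$. As $\phi_{x}$ vanishes at $x$ we always have $\operatorname{lct}_{F}(\phi_{x};x)\le 1$; when $\operatorname{lct}_{F}(\phi_{x};x)=1$ part~(1) is vacuous and part~(2) reduces to the trivial bounds $1\le k_{\star}(\phi;x)\le 2$, so I assume $\operatorname{lct}_{F}(\phi_{x};x)<1$. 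Let $h\colon[0,\infty]\to[0,\infty]$ be $h(t)=t/(1-t)$ for $t<1$ and $h(t)=\infty$ for $t\ge 1$; it is continuous, nondecreasing, and injective on $[0,1)$. Corollary~\ref{cor:connection between epsilon and delta} reads $\epsilon_{\star}(\phi_{*}\mu)=h(\delta_{\star}(\phi_{*}\mu))$ for every $\mu\in\mathcal{M}_{c}^{\infty}(U)$, $U\ni x$; since $h$ is nondecreasing and continuous it commutes with the infimum over $\mu$ and the supremum over $U$ in (\ref{eq:defeps-phi}), so $\epsilon_{\star}(\phi;x)=h(\delta_{\star}(\phi;x))$. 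Comparing with $\epsilon_{\star}(\phi;x)=h(\operatorname{lct}_{F}(\phi_{x};x))$ from Theorem~\ref{thm:one dimensional} and using the injectivity of $h$ on $[0,1)$ gives $\delta_{\star}(\phi;x)=\operatorname{lct}_{F}(\phi_{x};x)$, which is part~(1).

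For the Thom--Sebastiani assertion I would prove the general identity $\operatorname{lct}_{F}(f\oplus g;(p,q))=\min(\operatorname{lct}_{F}(f;p)+\operatorname{lct}_{F}(g;q),1)$ for locally dominant analytic $f,g$ vanishing at $p,q$, where $(f\oplus g)(z,w)=f(z)+g(w)$. Pick smooth compactly supported $\mu_{1},\mu_{2}$ with positive density near $p,q$, set $\nu_{1}=f_{*}\mu_{1}$, $\nu_{2}=g_{*}\mu_{2}$, $\ell_{1}=\operatorname{lct}_{F}(f;p)$, $\ell_{2}=\operatorname{lct}_{F}(g;q)$. By Theorem~\ref{thm:asymptotic expansion} the density of $\nu_{i}$ near $0$ is comparable to $|t|_{F}^{\ell_{i}-1}(\log|t|_{F})^{m_{i}-1}$ on a nonempty set of angular directions (a small ball, in the non-Archimedean case); the decisive point is that, because $\mu_{i}\ge 0$, the leading exponent of the expansion is real (equal to $\ell_{i}$) and its coefficient is the integral against $\mu_{i}$ of a nonnegative, not identically zero kernel, hence strictly positive. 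Since $(f\oplus g)_{*}(\mu_{1}\otimes\mu_{2})=\nu_{1}*\nu_{2}$ and densities are nonnegative, a beta-integral estimate (using $\ell_{1},\ell_{2}>0$) gives that the density of $\nu_{1}*\nu_{2}$ near $0$ is comparable to $|t|_{F}^{\ell_{1}+\ell_{2}-1}(\log|t|_{F})^{m_{1}+m_{2}-2}$, with no possible cancellation. Computing $\operatorname{lct}_{F}(f\oplus g;(p,q))$ then reduces to a one-variable integral: for arbitrary positive smooth $\mu$ near $(p,q)$ one bounds $\mu\le C\cdot(\text{Lebesgue})$ and gets $\int|f\oplus g|_{F}^{-s}\,d\mu\le C\int_{F}|t|_{F}^{-s}\,d(\nu_{1}*\nu_{2})_{\mathrm{Leb}}(t)<\infty$ for $s<\min(\ell_{1}+\ell_{2},1)$, while $\mu_{1}\otimes\mu_{2}$ forces divergence for $s\ge\ell_{1}+\ell_{2}$ when $\ell_{1}+\ell_{2}<1$. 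Taking $f=\phi_{x}$, $g=\phi_{y}$ gives the stated formula, and iterating (all intermediate thresholds being $<1$) yields $\operatorname{lct}_{F}(\phi_{x}^{*j};(x,\dots,x))=\min(j\operatorname{lct}_{F}(\phi_{x};x),1)$ for the $j$-fold convolution; note that near $(x,\dots,x)$ the only critical value of $\phi_{x}^{*j}$ is again $0$, so the asymptotic expansion applies. (Alternatively the two-fold case follows from the $\delta_{\star}$ identity and $\mathcal{F}(\nu_{1}*\nu_{2})=\mathcal{F}(\nu_{1})\mathcal{F}(\nu_{2})$, the non-cancellation again coming from positivity via Theorem~\ref{thm:asymptotic expansion}(3).)

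For part~(2), the upper bound is immediate from (\ref{eq:via-young}): since $1/\epsilon_{\star}(\phi;x)=1/\operatorname{lct}_{F}(\phi_{x};x)-1$ by Theorem~\ref{thm:one dimensional}, we have $\lfloor(1+\epsilon_{\star}(\phi;x))/\epsilon_{\star}(\phi;x)\rfloor=\lfloor 1/\operatorname{lct}_{F}(\phi_{x};x)\rfloor$ and hence $k_{\star}(\phi;x)\le\lfloor 1/\operatorname{lct}_{F}(\phi_{x};x)\rfloor+1$. For the lower bound, fix $k$ with $(k-1)\operatorname{lct}_{F}(\phi_{x};x)<1$ and choose $\mu$ with positive density near $x$, so $\mu^{\otimes(k-1)}$ has positive density near $(x,\dots,x)$ and $(\phi_{*}\mu)^{*(k-1)}=(\phi_{x}^{*(k-1)})_{*}(\mu^{\otimes(k-1)})$. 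By the same positivity argument as above, together with the iterated Thom--Sebastiani identity, the density of $(\phi_{*}\mu)^{*(k-1)}$ near $0$ is $\gtrsim|t|_{F}^{\ell-1}(\log|t|_{F})^{m-1}$ with $\ell=(k-1)\operatorname{lct}_{F}(\phi_{x};x)<1$, hence $\epsilon_{\star}((\phi_{*}\mu)^{*(k-1)})\le\ell/(1-\ell)<\infty$; in particular $(\phi_{*}\mu)^{*(k-1)}\notin\mathcal{M}_{c,\infty}(F)$, so $k_{\star}(\phi_{*}\mu)\ge k$. As this holds for $\mu$ supported in any neighbourhood of $x$, we conclude $k_{\star}(\phi;x)\ge\lceil 1/\operatorname{lct}_{F}(\phi_{x};x)\rceil$.

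The step I expect to be the main obstacle is the no-cancellation claim in the Thom--Sebastiani argument: a priori the leading contributions of $\nu_{1}$ and $\nu_{2}$ to $\nu_{1}*\nu_{2}$ could interfere destructively, which would make $\operatorname{lct}_{F}(\phi_{x}*\phi_{y})$ strictly larger than $\ell_{1}+\ell_{2}$ and invalidate both the formula and the lower bound on $k_{\star}$. This is exactly where the nonnegativity of the measures is essential: for nonnegative measures the smallest pole of the associated local zeta function is real and has a strictly positive residue, so in the nonnegative function $\nu_{1}*\nu_{2}$ the two contributions add rather than cancel. The remaining points — discreteness of the critical value set, the domination $\mu\le C\cdot\mathrm{Leb}$, and the commutation of $h$ with $\inf$ and $\sup$ — are routine.
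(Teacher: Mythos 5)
Your proof is correct and follows the paper's strategy in all essentials: it derives $\delta_{\star}(\phi;x)=\operatorname{lct}_{F}(\phi_{x};x)$ by combining the $\epsilon_{\star}$\textendash$\delta_{\star}$ relation of Corollary~\ref{cor:connection between epsilon and delta} with Theorem~\ref{thm:one dimensional}, deduces the Thom\textendash Sebastiani identity, and obtains the two-sided bound on $k_{\star}$ by iterating and invoking Young's inequality~(\ref{eq:via-young}). The one place where you diverge from the paper is the Thom\textendash Sebastiani step: you argue on the density side, estimating the density of $(\phi_{x}*\phi_{y})_{*}(\mu_{1}\times\mu_{2})=(\phi_{x})_{*}\mu_{1}*(\phi_{y})_{*}\mu_{2}$ near $0$ by a beta-type integral, whereas the paper argues on the Fourier side, using $\mathcal{F}(\nu_{1}*\nu_{2})=\mathcal{F}(\nu_{1})\mathcal{F}(\nu_{2})$ to get $\delta_{\star}(\phi*\phi;(x,y))=\delta_{\star}(\phi;x)+\delta_{\star}(\phi;y)$ and citing Denef's survey for the non-cancellation; these are dual computations, and you correctly identify positivity of the measures as the crucial input in either version, even offering the Fourier variant as an alternative. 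Your handling of the first sentence \textemdash{} via the monotone, bicontinuous function $h(t)=t/(1-t)$ and its commutation with $\inf$ and $\sup$ \textemdash{} is a slightly cleaner packaging of the same reasoning as the paper, which instead picks a specific $\mu$ with positive density at $x$ and uses Lemma~\ref{lem:formula for monomial maps} to show it realizes the infimum; part~(2) is handled identically in both.
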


\begin{proof}
Let us first prove Item (1). Let $x\in X$. Replacing $\phi$ with
$\phi_{x}$, we may assume that $\phi(x)=0$. We may choose an open
neighborhood $U$ such that $0$ is the only critical value of $\phi|_{U}$,
$\overline{U}$ is compact and such that for each $\mu\in\mathcal{M}_{c}^{\infty}(U)$,
and each $s<\operatorname{lct}_{F}(\phi;x)$, one has $\int\left|\phi(x)\right|_{F}^{-s}d\mu(s)<\infty$.
Taking any $\mu$ which does not vanish at $x$, we get by Theorem
\ref{thm:one dimensional} and Lemma \ref{lem:formula for monomial maps}
that 
\[
\epsilon_{\star}(\phi_{*}\mu)=\frac{\operatorname{lct}_{F}(\phi_{x};x)}{1-\operatorname{lct}_{F}(\phi_{x};x)}=\epsilon_{\star}(\phi;x)<\infty.
\]
Corollary \ref{cor:connection between epsilon and delta} implies
that 
\[
\delta_{\star}(\phi_{*}\mu)=\frac{\epsilon_{\star}(\phi_{*}\mu)}{1+\epsilon_{\star}(\phi_{*}\mu)}=\frac{\epsilon_{\star}(\phi;x)}{1+\epsilon_{\star}(\phi;x)}=\operatorname{lct}_{F}(\phi_{x};x).
\]
Since the equalities above hold for $\mu$ of arbitrarily small support
around $x$, we get $\delta_{\star}(\phi;x)=\delta_{\star}(\phi_{*}\mu)=\operatorname{lct}_{F}(\phi_{x};x)$
as required. Since Fourier transform translates convolution into product,
we have 
\[
\delta_{\star}(\phi*\phi;(x,y))=\delta_{\star}(\phi;x)+\delta_{\star}(\phi;y),
\]
which implies the second part of Item (1) (see also \cite[Section 5.1]{Den91a}).

We now turn to Item (2). Set $k_{0}:=\left\lceil \frac{1}{\operatorname{lct}_{F}(\phi_{x};x)}\right\rceil $.
For a positive integer $k<k_{0}$ we get by Item (1), that $\operatorname{lct}_{F}(\phi_{x}^{*k};(x,...,x))<1$,
so that $\epsilon_{\star}(\phi_{x}^{*k};(x,...,x))<\infty$. This
implies the lower bound $k_{\star}(\phi;x)\geq k_{0}$. The upper
bound follows from (\ref{eq:via-young}) and Theorem \ref{thm:one dimensional}. 
\end{proof}
\begin{rem}
\label{rem:Analytic Thom--Sabestiani}Item (1) of Proposition \ref{prop:Thom--Sebastiani}
can be seen as an $F$-analytic interpretation of a theorem by Thom\textendash Sebastiani
\cite{STh71} (see e.g. \cite{MSS18} for the case $F=\C$). 
\end{rem}

The combination of Corollaries \ref{cor:notach} and \ref{cor:connection between epsilon and delta}
implies Theorem \ref{thm:reverse Young-intro}, as follows. 
\begin{proof}[Proof of Theorem \ref{thm:reverse Young-intro}]
Let $\mu_{j}\in\mathcal{M}_{c}^{\infty}(F^{n_{j}})$, and suppose
that 
\begin{equation}
\nu_{1}*\nu_{2}\in\mathcal{M}_{1+\epsilon}(F),\quad\epsilon>0,\label{eq:assum-revyoung}
\end{equation}
where $\nu_{j}=(\phi_{j})_{*}\mu_{j}$. Assume by contradiction that
\[
\frac{\epsilon_{\star}(\nu_{1})}{1+\epsilon_{\star}(\nu_{1})}+\frac{\epsilon_{\star}(\nu_{2})}{1+\epsilon_{\star}(\nu_{2})}\leq\frac{\epsilon}{1+\epsilon},
\]
For each $j\in\{1,2\}$, choose a finite cover $\bigcup_{i}U_{i,j}$
of $\operatorname{Supp}(\mu_{j})$ by open balls in $F^{n_{j}}$,
such that for each $i$, $\phi_{j}|_{U_{i,j}}$ has at most one critical
value $z_{i,j}$ of $\phi_{j}$. We can write $\mu_{j}=\sum_{i}\mu_{i,j}$
with $\mu_{i,j}$ supported inside $U_{i,j}$. Taking $\phi_{i,j}=\phi_{j}|_{U_{i,j}}-z_{i,j}$,
we can find $i_{1}$ and $i_{2}$ such that 
\[
\epsilon_{\star}(\nu_{j})=\epsilon_{\star}\left((\phi_{i_{j},j})_{*}(\mu_{i_{j},j})\right),\quad j=1,2.
\]
Now, Corollary \ref{cor:connection between epsilon and delta} implies
that 
\[
\delta_{\star}\left((\phi_{i_{j},j})_{*}(\mu_{i_{j},j})\right)=\frac{\epsilon_{\star}\left((\phi_{i_{j},j})_{*}(\mu_{i_{j},j})\right)}{1+\epsilon_{\star}\left((\phi_{i_{j},j})_{*}(\mu_{i_{j},j})\right)},
\]
whence 
\[
\delta_{\star}\left((\phi_{i_{1},1})_{*}(\mu_{i_{1},1})*(\phi_{i_{2},2})_{*}(\mu_{i_{2},2})\right)\leq\frac{\epsilon}{1+\epsilon}.
\]
Using Corollary \ref{cor:connection between epsilon and delta} once
again, we obtain that 
\[
\epsilon_{\star}(\nu_{1}*\nu_{2})\leq\epsilon_{\star}\left((\phi_{i_{1},1})_{*}(\mu_{i_{1},1})*(\phi_{i_{2},2})_{*}(\mu_{i_{2},2})\right)\leq\epsilon.
\]
In view of Corollary \ref{cor:notach}, this contradicts (\ref{eq:assum-revyoung}). 
\end{proof}

\subsection{Consistency of the various bounds in the one-dimensional case}

The following proposition, which can be seen as a variant of the \L{}ojasiewicz
gradient inequality (see e.g. \cite[p.92]{Loj65}, \cite[Proposition 6.8]{BM88}),
shows that the formula for $\epsilon_{\star}(\phi;x)$ given in Theorem
\ref{thm:one dimensional} in the one-dimensional case is consistent
with the lower and upper bounds in Theorems \ref{thm:lowerbd} and
\ref{thm:upper-C}. The proof of Proposition \ref{prop:Lbounds agree with formula}
is similar to \cite[Theorem 1]{Fee19}, but applied to any local field.

Note that for $\phi:F^{n}\rightarrow F$, we have $\mathcal{J}_{\phi}=\langle\frac{\partial\phi}{\partial x_{1}},...,\frac{\partial\phi}{\partial x_{n}}\rangle$,
which we denote by $\langle\nabla\phi\rangle$. 
\begin{prop}
\label{prop:Lbounds agree with formula}Let $\phi:F^{n}\rightarrow F$
be an analytic map. Then for every $x\in F^{n}$, we have 
\[
\frac{\operatorname{lct}_{F}(\langle\nabla\phi\rangle;x)}{1-\operatorname{lct}_{F}(\langle\nabla\phi\rangle;x)}\geq\frac{\operatorname{lct}_{F}(\phi_{x};x)}{1-\operatorname{lct}_{F}(\phi_{x};x)}\geq\operatorname{lct}_{F}(\langle\nabla\phi\rangle;x)\geq\operatorname{lct}_{F}(\phi_{x};x),
\]
where the middle (resp. left) inequality holds whenever $\operatorname{lct}_{F}(\phi_{x};x)<1$
(resp. $\operatorname{lct}_{F}(\langle\nabla\phi\rangle;x)<1$). 
\end{prop}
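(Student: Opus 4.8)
The plan is to pass to a single resolution that log-principalizes both $\langle\nabla\phi\rangle=\mathcal{J}_\phi$ and $\langle\phi_x\rangle$ and to compare the resulting numerical data, exploiting that the Jacobian multiplicities of the resolution are shared by the two log-canonical-threshold computations. First I would reduce to the case $\phi(x)=0$, $x=0$, $X\subseteq F^n$ open, noting $\nabla\phi=\nabla\phi_x$. By Theorem~\ref{thm:analytic resolution of sing} there is a proper $\pi:\widetilde X\to U$ that is simultaneously a log-principalization of $\langle\phi_x\rangle$ and of $\mathcal{J}_\phi$ along a common collection $\{E_i\}_{i\in T}$ of codimension-one submanifolds, carrying multiplicities $a_i$ of $E_i$ in $\operatorname{div}(\phi_x\circ\pi)$, multiplicities $c_i$ in the divisor of $\pi^*\mathcal{J}_\phi$, and multiplicities $b_i$ in $\operatorname{div}(\operatorname{Jac}\pi)$. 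By Lemma~\ref{Lemma:F-lct}, writing $\alpha:=\operatorname{lct}_F(\phi_x;0)=\min_{0\in\pi(E_i)}\frac{b_i+1}{a_i}$ and $\beta:=\operatorname{lct}_F(\mathcal{J}_\phi;0)=\min_{0\in\pi(E_i)}\frac{b_i+1}{c_i}$, and using that $t\mapsto t/(1-t)$ is strictly increasing on $[0,1)$, the asserted chain of four inequalities reduces to the two statements: $\beta\geq\alpha$ (which also yields the leftmost inequality when $\beta<1$, since then $\alpha\leq\beta<1$), and $\beta\leq\alpha/(1-\alpha)$ when $\alpha<1$.

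The mechanism is the chain rule $\nabla(\phi_x\circ\pi)=\big((\nabla\phi)\circ\pi\big)\cdot d\pi$, with gradients row vectors and $d\pi$ the Jacobian matrix of $\pi$. Multiplying on the right by $\operatorname{adj}(d\pi)$ and using $d\pi\cdot\operatorname{adj}(d\pi)=\operatorname{Jac}(\pi)\cdot I$ gives the sandwich of ideal sheaves on $\widetilde X$
\[
\operatorname{Jac}(\pi)\cdot\pi^*\mathcal{J}_\phi\ \subseteq\ \mathcal{J}_{\phi\circ\pi}=\langle\nabla(\phi_x\circ\pi)\rangle\ \subseteq\ \pi^*\mathcal{J}_\phi .
\]
For $\beta\geq\alpha$, fix $i$ with $0\in\pi(E_i)$ and a point $\widetilde p\in E_i\cap\pi^{-1}(0)$; since $\phi_x\circ\pi=u\prod\widetilde x_j^{a_j}$ vanishes at $\widetilde p$, some component $E_{j_0}\ni\widetilde p$ has $a_{j_0}\geq1$, and then $\partial_{\widetilde x_{j_0}}(\phi_x\circ\pi)$ equals a unit times $\widetilde x_{j_0}^{a_{j_0}-1}\prod_{j\neq j_0}\widetilde x_j^{a_j}$. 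By the right-hand inclusion this monomial lies in $\pi^*\mathcal{J}_\phi=\langle\prod_j\widetilde x_j^{c_j}\rangle$, forcing $c_j\leq a_j$ for every component through $\widetilde p$, in particular $c_i\leq a_i$. Hence $\frac{b_i+1}{c_i}\geq\frac{b_i+1}{a_i}$ for each relevant $i$, and taking minima gives $\beta\geq\alpha$.

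For the middle inequality, assume $\alpha<1$, choose $E_{i^*}$ attaining $\alpha=\frac{b_{i^*}+1}{a_{i^*}}$ (so $a_{i^*}\geq b_{i^*}+2$), and evaluate everything near a general point $\widetilde q$ of $E_{i^*}$ chosen in $\pi^{-1}$ of a neighbourhood of $0$ and lying on no other $E_j$. If $\widetilde x_1$ is the local equation of $E_{i^*}$ there, then $\phi_x\circ\pi=u\,\widetilde x_1^{a_{i^*}}$, $\operatorname{Jac}\pi=v\,\widetilde x_1^{b_{i^*}}$, $\pi^*\mathcal{J}_\phi=\langle\widetilde x_1^{c_{i^*}}\rangle$, while $\partial_{\widetilde x_1}(\phi_x\circ\pi)=(a_{i^*}u+\widetilde x_1\partial_{\widetilde x_1}u)\,\widetilde x_1^{a_{i^*}-1}$ is a unit times $\widetilde x_1^{a_{i^*}-1}$ (here $a_{i^*}\neq0$ in $F$ since $\operatorname{char}F=0$) and the remaining partials of $\phi_x\circ\pi$ are divisible by $\widetilde x_1^{a_{i^*}}$; hence $\mathcal{J}_{\phi\circ\pi}=\langle\widetilde x_1^{a_{i^*}-1}\rangle$ near $\widetilde q$. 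The left-hand inclusion now reads $\langle\widetilde x_1^{b_{i^*}+c_{i^*}}\rangle\subseteq\langle\widetilde x_1^{a_{i^*}-1}\rangle$, i.e. $b_{i^*}+c_{i^*}\geq a_{i^*}-1$. Since $a_{i^*}-1-b_{i^*}\geq1$ this gives $c_{i^*}\geq1$ (so $E_{i^*}$ also enters the minimum defining $\beta$) and $\frac{c_{i^*}}{b_{i^*}+1}\geq\frac{a_{i^*}}{b_{i^*}+1}-1=\frac1\alpha-1>0$, whence $\frac1\beta\geq\frac{c_{i^*}}{b_{i^*}+1}\geq\frac1\alpha-1$ and therefore $\beta\leq\alpha/(1-\alpha)$, which is the required bound (and recovers the rightmost inequality of the statement as well, via $\beta\geq\alpha$ above).

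The reductions, the two elementary monomial differentiations, and the existence of a general point of $E_{i^*}$ over a neighbourhood of $0$ are routine. The genuine external input is the existence of a single resolution simultaneously log-principalizing $\langle\phi_x\rangle$ and $\mathcal{J}_\phi$ while keeping $\operatorname{Jac}\pi$ monomial on the same divisor; I would obtain it by first log-principalizing $\langle\phi_x\rangle$ and then resolving the pullback of $\mathcal{J}_\phi$ by further blow-ups, invoking the functoriality of resolution of singularities. I expect the point a reader should check most carefully to be precisely this shared structure — that the chosen $\pi$ computes both log-canonical thresholds through the \emph{same} multiplicities $b_i$ — since it is what turns the two one-variable divisibility relations above into the numerical inequalities $\beta\geq\alpha$ and $\beta\leq\alpha/(1-\alpha)$.
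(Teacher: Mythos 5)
Your proof is correct, and the route is genuinely different from the paper's. The paper proves only the rightmost inequality directly: it resolves $\langle\phi_{x_0}\rangle$ alone, uses the chain rule $\nabla\widetilde\phi=(\nabla\phi\circ\pi)\cdot d\pi$ together with the compactness bound $\lVert d\pi\rVert_{\mathrm{op}}\lesssim1$, and compares the monomial integrands (a \L{}ojasiewicz-type integral estimate). The middle inequality is then obtained for free by combining Theorem~\ref{thm:lowerbd} ($\epsilon_\star\geq\operatorname{lct}_F(\mathcal{J}_\phi;x)$) with Theorem~\ref{thm:one dimensional} ($\epsilon_\star=\frac{\alpha}{1-\alpha}$ when $\alpha<1$). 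You instead construct a joint log-principalization of $\langle\phi_x\rangle$ and $\mathcal{J}_\phi$ and encode the chain rule as the ideal sandwich $\operatorname{Jac}(\pi)\cdot\pi^*\mathcal{J}_\phi\subseteq\mathcal{J}_{\phi\circ\pi}\subseteq\pi^*\mathcal{J}_\phi$, reading both inequalities off the multiplicities: the right inclusion gives $c_i\leq a_i$ (hence $\beta\geq\alpha$), and the left inclusion at a general point of the minimum-attaining $E_{i^*}$ gives $c_{i^*}\geq a_{i^*}-b_{i^*}-1$ (hence $\beta\leq\alpha/(1-\alpha)$). This buys a self-contained, purely algebro-geometric derivation of all four inequalities that does not rely on the $\epsilon_\star$-machinery of Theorems~\ref{thm:lowerbd} and~\ref{thm:one dimensional}, and makes transparent that the middle and rightmost inequalities are precisely the two sides of the sandwich. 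The cost is the heavier resolution input; you correctly flag the need for a single $\pi$ computing both $\operatorname{lct}$'s through the same $b_i$'s, and your proposed construction (log-principalize $\langle\phi_x\rangle$, then further blow up to log-principalize the pullback of $\mathcal{J}_\phi$ while keeping the exceptional locus and $\operatorname{Jac}(\pi)$ in normal-crossings form) is the standard way to obtain it, compatible with Theorem~\ref{thm:analytic resolution of sing}. The one step worth spelling out is that for the rightmost inequality you must run the ``pick a point $\widetilde p\in E_i\cap\pi^{-1}(0)$'' argument separately for each $i$ entering the minimum for $\beta$, and that $c_i\geq1$ forces $a_i\geq1$ so the index set for $\beta$ is contained in that for $\alpha$; you do implicitly handle this, but it is the place a careless reader might stumble.
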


\begin{proof}
The middle inequality follows from Theorems \ref{thm:lowerbd} and
\ref{thm:one dimensional}, and the left inequality follows from the
right inequality, so it is left to show that $\operatorname{lct}_{F}(\langle\nabla\phi\rangle;x_{0})\geq\operatorname{lct}_{F}(\phi_{x_{0}};x_{0})$
for a fixed $x_{0}\in F^{n}$.

Recall that $\left\Vert (a_{1},...,a_{n})\right\Vert _{F}:=\max_{i}\left|a_{i}\right|_{F}$
is the maximum norm on $F^{n}$. We would like to relate between $\int_{B}\left\Vert \nabla\phi\right\Vert _{F}^{-s}dx$
and $\int_{B}\left|\phi_{x_{0}}\right|_{F}^{-s}dx$, where $B$ is
a small ball around $x_{0}$, for $s>0$ small enough. Let $\pi:\widetilde{X}\rightarrow F^{n}$
be a resolution of singularities of $\phi_{x_{0}}$. Then we have
\begin{equation}
\int_{B}\left\Vert \nabla\phi\right\Vert _{F}^{-s}dx=\int_{\pi^{-1}(B)}\left\Vert \nabla\phi|_{\pi(\widetilde{x})}\right\Vert _{F}^{-s}\left|\operatorname{Jac}_{\widetilde{x}}(\pi)\right|_{F}d\widetilde{x}.\label{eq:star}
\end{equation}
Since $\pi^{-1}(B)$ is compact, by working locally over finitely
many pieces, and using Theorem \ref{thm:analytic resolution of sing},
we may replace $\pi^{-1}(B)$ by a compact neighborhood $L\subseteq F^{n}$
of $0$, $\pi(0)=x_{0}$, and further assume that for $\widetilde{x}=(\widetilde{x}_{1},...,\widetilde{x}_{n})$:
\begin{equation}
\widetilde{\phi}(\widetilde{x}):=\phi\circ\pi(\widetilde{x})=C\widetilde{x}_{1}^{a_{1}}\cdots\widetilde{x}_{n}^{a_{n}}\text{ and }\operatorname{Jac}_{\widetilde{x}}(\pi)=v(\widetilde{x})\widetilde{x}_{1}^{b_{1}}\cdots\widetilde{x}_{n}^{b_{n}},\label{eq:resolution data}
\end{equation}
for some analytic unit $v(\widetilde{x})$ and a constant $C\in F$.
Note that on $L$ all of the entries of $d_{\widetilde{x}}\pi$ are
smaller, in absolute value, than some constant $\widetilde{C}$, so
that the operator norm $\left\Vert d_{\widetilde{x}}\pi\right\Vert _{\mathrm{op}}:=\underset{v\in F^{n}:\left\Vert v\right\Vert _{F}=1}{\sup}\left\Vert d_{\widetilde{x}}\pi\cdot v\right\Vert _{F}$
of $d_{\widetilde{x}}\pi$ is bounded by a constant. Since $\nabla\widetilde{\phi}|_{\widetilde{x}}=\nabla\phi|_{\pi(\widetilde{x})}\cdot d_{\widetilde{x}}\pi$,
for $s>0$ we get: 
\begin{align*}
\text{\ensuremath{\int_{L}\left\Vert \nabla\phi|_{\pi(\widetilde{x})}\right\Vert _{F}^{-s}\left|\operatorname{Jac}_{\widetilde{x}}(\pi)\right|_{F}}}d\widetilde{x} & \leq\int_{L}\left\Vert d_{\widetilde{x}}\pi\right\Vert _{\mathrm{op}}^{s}\left\Vert \nabla\widetilde{\phi}|_{\widetilde{x}}\right\Vert _{F}^{-s}\left|\operatorname{Jac}_{\widetilde{x}}(\pi)\right|_{F}d\widetilde{x}\lesssim\int_{L}\left\Vert \nabla\widetilde{\phi}|_{\widetilde{x}}\right\Vert _{F}^{-s}\left|\operatorname{Jac}_{\widetilde{x}}(\pi)\right|_{F}d\widetilde{x}\\
 & \lesssim\int_{L}\left(\max_{j}\left|\widetilde{x}_{1}\right|_{F}^{a_{1}}\cdot...\cdot\left|\widetilde{x}_{j}\right|_{F}^{a_{j}-1}\cdot...\cdot\left|\widetilde{x}_{n}\right|_{F}^{a_{n}}\right)^{-s}\left|\operatorname{Jac}_{\widetilde{x}}(\pi)\right|_{F}d\widetilde{x}\\
 & \lesssim\int_{L}\left(\left|\widetilde{x}_{1}\right|_{F}^{a_{1}}...\left|\widetilde{x}_{n}\right|_{F}^{a_{n}}\right)^{-s}\left|\operatorname{Jac}_{\widetilde{x}}(\pi)\right|_{F}d\widetilde{x}\lesssim\int_{\pi(L)}\left|\phi_{x_{0}}\right|_{F}^{-s}dx,
\end{align*}
which concludes the proof. 
\end{proof}
\begin{rem}
Here is an alternative approach to Proposition \ref{prop:Lbounds agree with formula},
as suggested by the referee. For simplicity suppose $F=\C$. We may
assume that $x=0\in\C^{n}$. Recall from \cite[Definition 1.1.1]{HS06}
that the integral closure $\overline{I}$ of an ideal $I$ in a commutative
ring $R$, is the set of elements $r\in R$ for which there are $a_{j}\in I^{j}$
such that $\sum_{j=0}^{n}a_{j}r^{n-j}=0$ for some $n\in\N$. We now
take $R=\C\{x_{1},...,x_{n}\}$ to be the ring of convergent power
series, $I:=\langle\nabla\phi\rangle$ and $J:=\langle\phi_{0}\rangle$.
By \cite[Corollary 7.1.4]{HS06}, we have $J\subseteq\overline{I}$
and in particular $\operatorname{lct}_{\C}(J;0)\leq\operatorname{lct}_{\C}(\overline{I};0)$
(see \cite[Property 1.15]{Mus12}). However, by \cite[Property 1.12]{Mus12}
(see also \cite[Proposition 2.4]{dFM09}) we have $\operatorname{lct}_{\C}(I;0)=\operatorname{lct}_{\C}(\overline{I};0)$,
which implies that $\operatorname{lct}_{\C}(J;0)\leq\operatorname{lct}_{\C}(I;0)$
as required. This argument likely generalizes to any local field $F$
of characteristic $0$, i.e. the full generality of Proposition \ref{prop:Lbounds agree with formula}. 
\end{rem}

\subsection{\label{subsec:Higher-dimensions}Some examples in higher dimension}

We next discuss the higher dimensional case (i.e.~$\dim Y>1$). Here
we will see that the connection between the four invariants $\epsilon_{\star}(\phi;x)$,
$\delta_{\star}(\phi;x)$, $k_{\star}(\phi;x)$ and $\operatorname{lct}_{F}(\phi_{x};x)$
is not as tight as in the one-dimensional case. We first provide a
simpler description of $\delta_{\star}(\phi;x)$. 
\begin{lem}
\label{lem:reduction to one dimensional case}Let $F$ be a local
field of characteristic $0$ and $\phi:F^{n}\rightarrow F^{m}$ be
an analytic map. Then 
\[
\delta_{\star}(\phi;x)=\inf_{\ell}\{\delta_{\star}(\ell\circ\phi;x)\},
\]
where $\ell$ runs over all non-zero linear functionals $\ell:F^{m}\rightarrow F$. 
\end{lem}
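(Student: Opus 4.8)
The plan is to prove the two inequalities separately. After translating we may assume $\phi(x)=0$, and we identify a nonzero linear functional $\ell\colon F^m\to F$ with the vector $v_\ell\in F^m$ for which $\ell(y)=\langle v_\ell,y\rangle$. The inequality $\delta_\star(\phi;x)\le\inf_\ell\delta_\star(\ell\circ\phi;x)$ is immediate: unwinding the Fourier transforms gives $\mathcal F\big((\ell\circ\phi)_*\mu\big)(s)=\int\Psi\big(s\langle v_\ell,\phi(z)\rangle\big)\,d\mu(z)=\mathcal F(\phi_*\mu)(sv_\ell)$, and since $\|sv_\ell\|_F\sim|s|_F$, any bound $|\mathcal F(\phi_*\mu)(t)|\lesssim\|t\|_F^{-\delta}$ restricts to $|\mathcal F((\ell\circ\phi)_*\mu)(s)|\lesssim|s|_F^{-\delta}$; hence $\delta_\star((\ell\circ\phi)_*\mu)\ge\delta_\star(\phi_*\mu)$ for all $\mu$, and passing to the infimum over $\mathcal M_{c,\infty}(U)$ and then the supremum over $U\ni x$ yields $\delta_\star(\ell\circ\phi;x)\ge\delta_\star(\phi;x)$.

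The content is the reverse inequality. Put $\delta_0:=\inf_\ell\delta_\star(\ell\circ\phi;x)$ and fix $\delta<\delta_0$; it suffices to find a neighborhood $U\ni x$ such that $|\mathcal F(\phi_*\mu)(t)|\lesssim\|t\|_F^{-\delta}$ for every $\mu\in\mathcal M_{c,\infty}(U)$, since we may then let $\delta\uparrow\delta_0$. Writing $t=\lambda\omega$ with $\omega$ in the compact ``sphere'' $S=\{\|\omega\|_F=1\}$ (equivalently $\omega\in\mathbb P^{m-1}(F)$), we have $\|t\|_F\sim|\lambda|_F$ and $\mathcal F(\phi_*\mu)(t)=\mathcal F\big((\ell_\omega\circ\phi)_*\mu\big)(\lambda)$, where $\ell_\omega=\langle\omega,\,\cdot\,\rangle$. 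So the task reduces to bounding $\mathcal F\big((\ell_\omega\circ\phi)_*\mu\big)(\lambda)$ by $C_\mu|\lambda|_F^{-\delta}$ \emph{uniformly in $\omega\in S$}.

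For this uniform bound I would apply Hironaka's theorem (Theorem~\ref{thm:analytic resolution of sing}) to the analytic function $(z,\omega)\mapsto\langle\omega,\phi(z)\rangle$ on $U\times S$, treating $\omega$ as a parameter: after shrinking $U$ and partitioning $S$ into finitely many pieces on which the resolution data is constant, pulling $\mu$ back, and using a partition of unity, $\mathcal F\big((\ell_\omega\circ\phi)_*\mu\big)(\lambda)$ becomes a finite sum of monomial oscillatory integrals whose decay exponents in $|\lambda|_F$ are, by Lemmas~\ref{Lemma:F-lct} and \ref{lem:formula for monomial maps}, of the form $\operatorname{lct}_F\big(\ell_\omega\circ\phi-c;\,z\big)$ for critical values $c$ of $\ell_\omega\circ\phi$ near $0$ and corresponding critical points $z$ near $x$ (note that the critical locus of $\ell_\omega\circ\phi$ lies in the zero locus of $\mathcal J_\phi$ and that $\langle\omega,\phi(\cdot)\rangle$ is locally constant along it, so its critical values tend to $0$ as the critical points approach $x$). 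Since $\delta_\star(\ell_\omega\circ\phi;x)\ge\delta_0>\delta$ forces $\operatorname{lct}_F\big((\ell_\omega\circ\phi)_x;x\big)>\delta$ for every $\omega$ (Proposition~\ref{prop:Thom--Sebastiani}(1)), lower semicontinuity of the log-canonical threshold in families (\cite{Var83}) lets us shrink $U$ so that all these exponents exceed $\delta$, uniformly over the compact $S$; summing the finitely many pieces then gives $|\mathcal F(\phi_*\mu)(t)|\lesssim\|t\|_F^{-\delta}$. When $\delta_0\ge1$ (in particular $\delta_0=\infty$) the same scheme applies, now also invoking the precise leading-coefficient correspondence of Theorem~\ref{thm:asymptotic expansion}(3) to upgrade to every prescribed rate $\delta<\delta_0$.

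The crux, and the step I expect to require the most care, is this uniformity in the direction $\omega$: one needs a simultaneous embedded resolution of the family $\{\langle\omega,\phi(\cdot)\rangle\}_{\omega\in S}$ after a finite stratification of $S$, together with control of the critical values occurring — which, as the example $\phi(x_1,x_2)=(x_1^2-x_2^2,\,x_1)$ shows, need not all equal $0$ even after shrinking $U$, yet whose log-canonical thresholds stay bounded below by $\delta_0$ by semicontinuity. In the non-Archimedean setting this uniformity can instead be extracted from the constructibility of $\mathcal F(\phi_*\mu)$ (restricting the infimum to constructible $\mu$, as permitted in the introduction) and the resulting tameness of its behavior at infinity in all directions.
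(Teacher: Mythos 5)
Your easy direction ($\delta_\star(\phi;x) \leq \inf_\ell \delta_\star(\ell\circ\phi;x)$) matches the paper: both rest on the identity $\mathcal{F}(\phi_*\mu)(ta) = \mathcal{F}\big((\ell_a\circ\phi)_*\mu\big)(t)$ for $\|a\|_F = 1$.

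For the reverse inequality, the paper's proof is essentially just this identity read in polar coordinates: since $\{ta : \|a\|_F=1,\ t\in F\}$ exhausts $F^m\setminus\{0\}$ and $\|ta\|_F=|t|_F$, the bound $|\mathcal{F}(\phi_*\mu)(z)|\lesssim\|z\|_F^{-\delta}$ is, term for term, the simultaneous bound $|\mathcal{F}((\ell_a\circ\phi)_*\mu)(t)|\lesssim|t|_F^{-\delta}$ uniformly in $a$, and the paper stops there. You correctly notice that the subtlety is whether the per-direction exponents $\delta_\star((\ell_a\circ\phi)_*\mu)$ --- whose implied constants, and even the admissible neighborhoods $U$, may a priori depend on $a$ --- can be upgraded to one uniform bound; the paper passes over this silently, and your instinct that it is the real content of the lemma is sound.

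That said, your proposed route for closing the gap is considerably heavier than the paper's and is not actually carried out. Theorem~\ref{thm:analytic resolution of sing} resolves a fixed ideal on an open subset of $F^n$; as cited it does not yield a simultaneous log-resolution of the family $\{\langle\omega,\phi(\cdot)\rangle\}_{\omega\in S}$ with resolution data locally constant in $\omega$ after a finite stratification of the sphere. You would need a genuine parametrized-resolution (or constructibility) statement in its place, and you would separately have to control all critical values of $\ell_\omega\circ\phi$ near $x$ uniformly in $\omega$ in order to shrink $U$ once and for all. You flag both of these yourself as ``the step requiring the most care''; they are exactly the missing steps, so as written the proposal does not constitute a proof, and even if completed it would be far out of proportion to the paper's one-line re-parametrization argument.
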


\begin{proof}
For each $\mu\in\mathcal{M}_{c}^{\infty}(F^{n})$, we have 
\[
\left|\mathcal{F}(\phi_{*}\mu)(z)\right|\lesssim\left\Vert z\right\Vert _{F}^{-\delta}\Longleftrightarrow\left|\mathcal{F}(\phi_{*}\mu)(ta_{1},...,ta_{m})\right|\lesssim\left|t\right|_{F}^{-\delta},
\]
for each $a=(a_{1},...,a_{m})$ with $\left\Vert a\right\Vert _{F}=1$
and $t\in F$. Setting $\ell_{a}(y_{1},...,y_{m})=\sum_{i=1}^{m}a_{i}y_{i}$,
we have, 
\[
\mathcal{F}(\phi_{*}\mu)(ta_{1},...,ta_{m})=\varint_{F^{m}}\Psi(t\ell_{a}(y))\cdot d\phi_{*}\mu=\varint_{F}\Psi(tz)\cdot d(\ell_{a}\circ\phi)_{*}\mu=\mathcal{F}\left((\ell_{a}\circ\phi)_{*}\mu\right)(t),
\]
which concludes the proof of the lemma. 
\end{proof}
The following examples demonstrate that the connection between the
four invariants can get loose as the dimension of $Y$ grows. 
\begin{example}
\label{exa:no connection when m>1}Consider the map $\phi:\C^{m}\rightarrow\C^{m}$,
defined by $\phi(x_{1},...,x_{m})=(x_{1}^{d},x_{1}^{d}x_{2},...,x_{1}^{d}x_{m})$.
Then: 
\begin{enumerate}
\item $\epsilon_{\star}(\phi)=\frac{1}{dm-1}$. 
\item $\operatorname{lct}_{\C}(J)=\frac{1}{d}$, where $J=\langle x_{1}^{d},x_{1}^{d}x_{2},...,x_{1}^{d}x_{m}\rangle=\langle x_{1}^{d}\rangle$. 
\item $dm\leq k_{\star}(\phi)\leq dm+1$. 
\item $\delta_{\star}(\phi)=\frac{1}{d}$. 
\end{enumerate}
Item (1) follows from Theorem \ref{thm:lowerbd}. Young's inequality
(see \ref{eq:via-young}) shows that $k_{\star}(\phi)\leq dm+1$,
on the other hand we have $\operatorname{lct}_{\C}(J)=\frac{1}{d}$
which implies that at least $dm$ convolutions are needed to obtain
the (FRS) property (see \cite[Lemmas 3.23 and 3.26]{GHb}), hence
$k_{\star}(\phi)\geq dm$. Note that for any $a_{1},..,a_{m}\in\C$
with $\left\Vert a\right\Vert _{\C}=1$ we have $\operatorname{lct}_{\C}(a_{1}x_{1}^{d}+a_{2}x_{1}^{d}x_{2}+...+a_{m}x_{1}^{d}x_{m})=\frac{1}{d}$
so $\delta_{\star}(\phi)=\frac{1}{d}$, by Lemma \ref{lem:reduction to one dimensional case}. 
\end{example}

\begin{rem}
One can replace $\phi$ in Example \ref{exa:no connection when m>1},
with 
\[
\phi(x_{1},...,x_{m})=(x_{1},x_{1}^{d}x_{2},...,x_{1}^{d}x_{m}).
\]
Here, we get $\epsilon_{\star}(\phi)=\frac{1}{dm-d}$, which is similar
to Example \ref{exa:no connection when m>1} when $m$ is large, while
$\operatorname{lct}_{\C}(J)=1$, where $J=\langle x_{1},x_{1}^{d}x_{2},...,x_{1}^{d}x_{m}\rangle=\langle x_{1}\rangle$,
which is much larger than in Example \ref{exa:no connection when m>1}. 
\end{rem}

The following example shows that a reverse Young inequality (Theorem
\ref{thm:reverse Young-intro}) does not hold in dimension $m\geq2$. 
\begin{example}
\label{exa:no reverse Young}Consider $\phi(x,y)=(x,x^{2}(1+y^{1000}))$.
Then $\epsilon_{\star}(\phi)=\frac{1}{999}$, while $\delta_{\star}(\phi)=\frac{1}{2}$,
and consequently $k_{\star}(\phi)\leq4$. 
\end{example}

\section{\label{sec:lower bound}Lower bound: proof of Theorem \ref{thm:lowerbd}}

In this section we prove Theorem~\ref{thm:lowerbd}. We start with
the equidimensional case, which we restate as Proposition~\ref{prop:equal dimension}
below.

Recall that given a locally dominant analytic map $\phi:X\to Y$ between
two $F$-analytic manifolds, the Jacobian ideal sheaf $\mathcal{J}_{\phi}$
is defined locally as the ideal in the algebra of analytic functions
on $X$ generated by the $m\times m$-minors of the differential $d_{x}(\phi)$
of $\phi$. 
\begin{prop}
\label{prop:equal dimension}Let $X,Y$ be $F$-analytic manifolds,
$\dim X=\dim Y=n$, and let $\phi:X\to Y$ be a locally dominant analytic
map. Then for any $x_{0}\in X$, 
\begin{equation}
\epsilon_{\star}(\phi;x_{0})=\operatorname{lct}_{F}(\mathcal{J}_{\phi};x_{0}).\label{eq:formula equal dimension}
\end{equation}
\end{prop}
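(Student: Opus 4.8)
The plan is to reduce to local coordinates, write the density of $\phi_{*}\mu$ explicitly, and then read off both inequalities from the definition of the log-canonical threshold. Since both $\epsilon_{\star}(\phi;x_{0})$ and $\operatorname{lct}_{F}(\mathcal{J}_{\phi};x_{0})$ are local invariants at $x_{0}$, unchanged under analytic changes of coordinates on source and target, I may assume $X,Y\subseteq F^{n}$ are open and $x_{0}=0$. As $\dim X=\dim Y=n$, the sheaf $\mathcal{J}_{\phi}$ is generated by the single analytic function $J:=\operatorname{Jac}_{x}\phi=\det d_{x}\phi$, which does not vanish identically on any open set because $\phi$ is locally dominant; unwinding (\ref{eq:def-lct}) gives $\operatorname{lct}_{F}(\mathcal{J}_{\phi};0)=\sup\{s>0:\ |J|_{F}^{-s}\in L^{1}\text{ on some ball around }0\}$. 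For $\mu=g\,\mu_{F}^{n}\in\mathcal{M}_{c}^{\infty}(X)$, the change-of-variables (area) formula for analytic maps gives $\phi_{*}\mu=h\,\mu_{F}^{n}$ with
\[
h(y)=\sum_{x\in\phi^{-1}(y)}\frac{g(x)}{|J(x)|_{F}}\qquad\text{for a.e. }y,
\]
together with the identity $\int_{Y}(\sum_{x\in\phi^{-1}(y)}G(x))\,d\mu_{F}^{n}(y)=\int_{X}G(x)\,|J(x)|_{F}\,d\mu_{F}^{n}(x)$ for measurable $G\geq0$ (in the non-Archimedean case this is obtained by covering $\{J\neq0\}$ by balls on which $\phi$ is a diffeomorphism).

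Both inequalities then follow from the elementary bounds $\sum_{i}a_{i}^{1+\epsilon}\leq(\sum_{i}a_{i})^{1+\epsilon}$ and, when the sum has at most $N$ terms, $(\sum_{i}a_{i})^{1+\epsilon}\leq N^{\epsilon}\sum_{i}a_{i}^{1+\epsilon}$ (for $a_{i}\geq0$), inserted into the area formula. For ``$\operatorname{lct}_{F}(\mathcal{J}_{\phi};0)\geq\epsilon_{\star}(\phi;0)$'': fix $\epsilon<\epsilon_{\star}(\phi;0)$, choose a neighbourhood $U\ni0$ with $\phi_{*}(g\mu_{F}^{n})\in L^{1+\epsilon}$ for all $0\leq g\in C_{c}^{\infty}(U)$, and pick such a $g$ with $g\equiv1$ on a ball $B\ni0$; then
\[
\infty>\|\phi_{*}(g\mu_{F}^{n})\|_{L^{1+\epsilon}}^{1+\epsilon}=\int_{Y}h(y)^{1+\epsilon}\,d\mu_{F}^{n}\geq\int_{Y}\sum_{x\in\phi^{-1}(y)\cap B}|J(x)|_{F}^{-(1+\epsilon)}\,d\mu_{F}^{n}=\int_{B}|J(x)|_{F}^{-\epsilon}\,d\mu_{F}^{n},
\]
so $\operatorname{lct}_{F}(\mathcal{J}_{\phi};0)\geq\epsilon$, and letting $\epsilon\uparrow\epsilon_{\star}(\phi;0)$ gives the claim. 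For ``$\epsilon_{\star}(\phi;0)\geq\operatorname{lct}_{F}(\mathcal{J}_{\phi};0)$'': fix $\epsilon<\operatorname{lct}_{F}(\mathcal{J}_{\phi};0)$ and choose a ball $U\ni0$ small enough that $\int_{U}|J|_{F}^{-\epsilon}<\infty$ and that $\phi|_{U}$ has at most $N$ preimages over each of its regular values; then for every $\mu=g\,\mu_{F}^{n}$ with $\operatorname{supp}g\subseteq U$,
\[
\|\phi_{*}\mu\|_{L^{1+\epsilon}}^{1+\epsilon}=\int_{Y}h(y)^{1+\epsilon}\,d\mu_{F}^{n}\leq N^{\epsilon}\int_{Y}\sum_{x\in\phi^{-1}(y)}\Big(\frac{g(x)}{|J(x)|_{F}}\Big)^{1+\epsilon}d\mu_{F}^{n}=N^{\epsilon}\int_{U}g^{1+\epsilon}|J|_{F}^{-\epsilon}\,d\mu_{F}^{n}<\infty,
\]
so $\phi_{*}\mu\in\mathcal{M}_{c,1+\epsilon}(Y)$; hence $\epsilon_{\star}(\phi;0)\geq\epsilon$, and letting $\epsilon\uparrow\operatorname{lct}_{F}(\mathcal{J}_{\phi};0)$ finishes the argument.

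The one genuinely non-formal input is the uniform bound $N$ on the number of sheets of $\phi|_{U}$ over its regular values, i.e.\ that a locally dominant analytic map between equidimensional manifolds becomes a finite branched cover after restriction to a small enough neighbourhood. I would deduce this from the local parametrization theorem together with Weierstrass preparation for analytic local rings: after a linear change of the target coordinates one reduces to a component of $\phi$ of Weierstrass type in a suitable variable and inducts on $n$, using that a Weierstrass polynomial of degree $d$ has at most $d$ roots; alternatively, one may replace $\phi$ by $\phi\circ\pi$ for a log-principalization $\pi$ of $\langle J\rangle$, after which a cell decomposition along the normal-crossings locus makes the finite-sheetedness transparent. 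The remaining points — the change-of-variables formula over non-Archimedean $F$, and the verification that the suprema and infima in (\ref{eq:defeps-phi}) are realized by the measures $g\,\mu_{F}^{n}$ used above — are routine.
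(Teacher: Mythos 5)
Your proof is correct and takes essentially the same route as the paper: reduce to $X,Y\subseteq F^n$, note $\mathcal{J}_\phi=\langle\operatorname{Jac}_x\phi\rangle$, write the pushforward density as a sum over fibers via the change-of-variables formula, and get the two inequalities from the convexity bounds $\sum a_i^{1+\epsilon}\leq(\sum a_i)^{1+\epsilon}\leq N^\epsilon\sum a_i^{1+\epsilon}$. The paper presents the lower bound slightly differently (it rewrites $\int g^{1+s}\,dy=\int(g\circ\phi)^s\,d\mu$ and then drops all but one term of $g\circ\phi$, rather than invoking superadditivity of $t\mapsto t^{1+\epsilon}$ directly), but this is cosmetic; both proofs hinge on the same uniform fiber-count bound, which the paper asserts with about as little justification as you give, decomposing $\{J\neq0\}$ into finitely-many-to-one locally closed pieces on which $\phi$ is a diffeomorphism.
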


\begin{proof}
We may assume that $X,Y$ are compact balls in $F^{n}$, and $x_{0}=0$.
Since $\dim X=\dim Y=n$, we have $\mathcal{J}_{\phi}=\langle\operatorname{Jac}_{x}(\phi)\rangle$,
where $\operatorname{Jac}_{x}(\phi)=\det\,(d_{x}\phi)$. Since $\phi$
is analytic and $X$ is compact, there is an open dense set $U\subseteq X$
and $M\in\N$ such that $\#\left\{ \phi^{-1}\phi(x)\right\} \leq M$
and $\operatorname{Jac}_{x}(\phi)\neq0$, for every $x\in U$. We
choose a disjoint cover $\bigcup_{i\in\N}U_{i}$ of $U$ by locally
closed subsets\footnote{Recall that a set is locally closed if it is the difference of two
open sets.} $U_{i}$ such that $\phi|_{U_{i}}$ is a diffeomorphism. Write $\mu:=\mu_{F}^{n}|_{X}$,
$\mu_{i}:=\mu_{F}^{n}|_{U_{i}}$, and note that $\phi_{*}\mu=g(y)\cdot\mu_{F}^{n}$
and $\phi_{*}\mu_{i}=g_{i}(y)\cdot\mu_{F}^{n}$, where 
\[
g(y)=\sum_{x\in\phi^{-1}(y)}\left|\operatorname{Jac}_{x}(\phi)\right|_{F}^{-1}\text{ and }g_{i}(y)=\left|\operatorname{Jac}_{\phi^{-1}(y)}(\phi)\right|_{F}^{-1}.
\]
We have 
\begin{equation}
\int_{Y}g(y)^{1+s}dy=\int_{\phi(U)}g(y)^{s}d\phi_{*}\mu=\int_{U}g\circ\phi(x)^{s}d\mu\geq\int_{U}\frac{1}{\left|\operatorname{Jac}_{x}(\phi)\right|_{F}^{s}}d\mu.\label{eq:formula for g^1+s equal dim}
\end{equation}
On the other hand, since $\#\left\{ i\in I:y\in\phi(U_{i})\right\} \leq M$
for each $y\in Y$, using Jensen's inequality, we have: 
\begin{align}
\int_{Y}g(y)^{1+s}dy & \leq\int_{Y}\left(\sum_{i\in I:y\in\phi(U_{i})}g_{i}(y)\right)^{1+s}dy\leq M^{s}\int_{Y}\sum_{i\in I:y\in\phi(U_{i})}g_{i}(y)^{1+s}dy\nonumber \\
 & =M^{s}\sum_{i\in I}\int_{Y}g_{i}(y)^{1+s}dy=M^{s}\sum_{i\in I}\int_{U_{i}}\frac{1}{\left|\operatorname{Jac}_{x}(\phi)\right|_{F}^{s}}d\mu=M^{s}\int_{U}\frac{1}{\left|\operatorname{Jac}_{x}(\phi)\right|_{F}^{s}}d\mu,\label{eq:formula for epsilon m=00003Dn}
\end{align}
which implies the proposition. 
\end{proof}
\begin{proof}[Proof of Theorem \ref{thm:lowerbd}]
Let $\phi:X\to Y$ be a locally dominant analytic map, and let $x_{0}\in X$.
Since the claim is local, we may assume that $X\subseteq F^{n}$ is
an open subset, and $Y=F^{m}$, with $n\geq m$. Let $B$ be a small
ball around $x_{0}$. For each subset $I\subseteq\{1,...,n\}$ of
size $m$, let $M_{I}$ be the corresponding $m\times m$-minor of
$d_{x}\phi$, and set 
\[
V_{I}:=\left\{ x\in B:\max_{I'\subseteq\{1,...,n\}}\left|M_{I'}(x)\right|_{F}=\left|M_{I}(x)\right|_{F}\right\} .
\]
If $V_{I}\neq\varnothing$, the map $\phi_{I}(x):=(\phi(x),x_{j_{1}},...,x_{j_{n-m}})$
is locally dominant, where $\operatorname{Jac}_{x}(\phi_{I})=M_{I}(x)$,
and $\{j_{1},...,j_{n-m}\}=\{1,...,n\}\backslash I$. For each $V_{I}$
of positive measure, let $\mu_{B}:=\mu_{F}^{n}|_{B}$, $\mu_{I}:=\mu_{F}^{n}|_{B\cap V_{I}}$
and write 
\[
\phi_{*}\mu_{B}=g(y)\cdot\mu_{F}^{m},\,\,\,\phi_{*}\mu_{I}=g_{I}(y)\cdot\mu_{F}^{m}\text{ and }\left(\phi_{I}\right)_{*}\mu_{I}=\widetilde{g}_{I}(z)\cdot\mu_{F}^{n}.
\]
Let $\widetilde{B}$ be a large ball in $F^{n-m}$ which contains
the projection of $\phi_{I}(B\cap V_{I})$ from $F^{n}$ to the last
$n-m$ coordinates $F^{n-m}$. Since $\phi=\pi_{I}\circ\phi_{I}$
where $\pi_{I}:F^{n}\rightarrow F^{m}$ is a projection to the first
$m$ coordinates, we have 
\[
g_{I}(y)=\int_{F^{n-m}}\widetilde{g}_{I}(y,z_{m+1},...,z_{n})dz_{m+1}...dz_{n}=\int_{\widetilde{B}}\widetilde{g}_{I}(y,z_{m+1},...,z_{n})dz_{m+1}...dz_{n}.
\]
By Jensen's inequality, we have
\begin{align*}
\int_{F^{m}}g_{I}(y)^{1+s}dy= & \int_{F^{m}}\mu_{F}^{n-m}(\widetilde{B})^{1+s}\left(\frac{1}{\mu_{F}^{n-m}(\widetilde{B})}\int_{\widetilde{B}}\widetilde{g}_{I}(y,z_{m+1},...,z_{n})dz_{m+1}...dz_{n}\right)^{1+s}dy\\
\leq & \mu_{F}^{n-m}(\widetilde{B})^{s}\cdot\int_{F^{m}\times\widetilde{B}}\widetilde{g}_{I}(y,z_{m+1},...,z_{n})^{1+s}dydz_{m+1}...dz_{n}\lesssim\int_{F^{n}}\widetilde{g}_{I}(z)^{1+s}dz,
\end{align*}
for every $s>0$. Since $\bigcup_{I}V_{I}$ is of full measure in
$B$, and using Proposition \ref{prop:equal dimension} and (\ref{eq:lct-analytic definition}),
we have: 
\begin{align*}
\int_{F^{m}}g(y)^{1+s}dy & \lesssim\sum_{I}\int_{F^{m}}g_{I}(y)^{1+s}dy\lesssim\sum_{I}\int_{F^{n}}\widetilde{g}_{I}(z)^{1+s}dz\\
 & \lesssim\sum_{I}\int_{V_{I}}\left|M_{I}(x)\right|_{F}^{-s}dx\lesssim\int_{B}\min_{I\subseteq\{1,...,n\}}[\left|M_{I}(x)\right|_{F}^{-s}]dx<\infty,
\end{align*}
for every $s<\operatorname{lct}_{F}(\mathcal{J}_{\phi};x_{0})$, as
required. 
\end{proof}

\section{\label{sec:upper bound}Proof of Theorem \ref{thm:upper-C} \textendash{}
an upper bound over $\mathbb{C}$}

In this section we prove Theorem~\ref{thm:upper-C}. We use the following
easy consequence of the coarea formula (see \cite{Fed69}). 
\begin{lem}
\label{lem:coarea}Let $B\subseteq\C^{n}$ be a compact ball, and
let $\phi:B\to\C^{m}$ be a dominant analytic map. Let $\mu_{B}:=\mu_{\C}^{n}|_{B}$
and write $\phi_{*}\mu_{B}=g(y)\cdot\mu_{\C}^{m}$. Then: 
\[
g(y)=\int_{\phi^{-1}(y)}\frac{dH_{2(n-m)}(x)}{\det\left((d_{x}\phi)^{*}(d_{x}\phi)\right)},
\]
where the integral is taken with respect to the $2(n-m)$-dimensional
Hausdorff measure (recall $\mathsection$\ref{subsec:Conventions}(7)). 
\end{lem}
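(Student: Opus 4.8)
The plan is to deduce the formula from Federer's coarea formula \cite{Fed69} applied to $\phi$ regarded as a real‑analytic map. Identify $\C^n\cong\R^{2n}$ and $\C^m\cong\R^{2m}$, so that $\mu_{\C}^n$ becomes the Lebesgue measure $\mathcal L^{2n}$ and $\mu_{\C}^m=\mathcal L^{2m}$. Since $B$ is compact and $\phi$ is analytic, $\phi|_B$ is Lipschitz, so for every nonnegative Borel function $h$ on $B$,
\[
\int_B h(x)\,J_{2m}(\phi)(x)\,d\mathcal L^{2n}(x)=\int_{\R^{2m}}\Big(\int_{\phi^{-1}(y)}h(x)\,d\mathcal H^{2(n-m)}(x)\Big)\,d\mathcal L^{2m}(y),
\]
where $J_{2m}(\phi)(x)=\sqrt{\det\big(D_x\phi\cdot(D_x\phi)^{T}\big)}$ is the $2m$-dimensional Jacobian of the real differential $D_x\phi\colon\R^{2n}\to\R^{2m}$.

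The first step is the linear‑algebra identification $J_{2m}(\phi)(x)=\det\big((d_x\phi)^*(d_x\phi)\big)$, where at a point $x$ of a fiber the right‑hand side is read as the determinant of the Hermitian endomorphism of the normal space $\big(T_x\phi^{-1}(\phi(x))\big)^{\perp}\cong\C^m$ induced by $d_x\phi$ — equivalently $\det_{\C}\big((d_x\phi)(d_x\phi)^*\big)$ when $d_x\phi$ is surjective. Writing a $\C$-linear map $A\colon\C^n\to\C^m$ as $A=P+iQ$ with $P,Q$ real, its realification has block form $A_{\R}=\bigl(\begin{smallmatrix}P&-Q\\Q&P\end{smallmatrix}\bigr)$, and one checks $(A^*)_{\R}=(A_{\R})^{T}$, whence $A_{\R}(A_{\R})^{T}=(AA^*)_{\R}$. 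As $H:=AA^*$ is Hermitian and positive semidefinite, diagonalizing it by a unitary matrix gives $\det_{\R}(H_{\R})=\big(\det_{\C}H\big)^2$; taking square roots yields $J_{2m}(\phi)(x)=\det_{\C}\big((d_x\phi)(d_x\phi)^*\big)$, as claimed.

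Next I would run the coarea formula with the correct weight. Let $\Sigma\subseteq B$ be the locus where $\operatorname{rank}_{\C}d_x\phi<m$; since $\phi$ is dominant and analytic and $B$ is connected, $\Sigma$ is a proper analytic subset, so $\mathcal L^{2n}(\Sigma)=0$, and applying the coarea formula once to $\mathbf 1_\Sigma$ shows $\mathcal H^{2(n-m)}\big(\Sigma\cap\phi^{-1}(y)\big)=0$ for a.e.\ $y$. Put $h:=\mathbf 1_{B\setminus\Sigma}/J_{2m}(\phi)$, so that $h\cdot J_{2m}(\phi)=1$ a.e.\ on $B$. For a Borel set $E\subseteq\C^m$,
\[
\phi_*\mu_B(E)=\int_B\mathbf 1_E(\phi(x))\,d\mathcal L^{2n}(x)=\int_B\mathbf 1_E(\phi(x))\,h(x)\,J_{2m}(\phi)(x)\,d\mathcal L^{2n}(x),
\]
and the coarea formula applied to $x\mapsto\mathbf 1_E(\phi(x))h(x)$ turns this into $\int_E\big(\int_{\phi^{-1}(y)}h(x)\,d\mathcal H^{2(n-m)}(x)\big)\,d\mathcal L^{2m}(y)$. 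Hence $\phi_*\mu_B=g(y)\,\mu_{\C}^m$ with $g(y)=\int_{\phi^{-1}(y)}d\mathcal H^{2(n-m)}(x)\big/\det\big((d_x\phi)^*(d_x\phi)\big)$ for a.e.\ $y$, which is the asserted identity (the fiber integrand is unambiguous because $\Sigma$ is $\mathcal H^{2(n-m)}$-null on almost every fiber).

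The one computation that must be carried out with care is the identification of the real $2m$-dimensional Jacobian with the complex determinant $\det\big((d_x\phi)^*(d_x\phi)\big)$; the remaining points — Lipschitz regularity on the compact ball, and the negligibility of the critical locus both on the target (so that $g$ is a genuine Lebesgue density) and on almost every fiber (so that the fiber integrals make sense) — are routine consequences of the coarea formula itself.
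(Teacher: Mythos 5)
Your proof is correct and follows exactly the route the paper intends: the paper simply calls the lemma an ``easy consequence of the coarea formula (see Fed69)'' without spelling out the details, and you have supplied precisely those details --- Lipschitzness on the compact ball, the realification identity $J_{2m}(\phi)=\det_{\mathbb C}\big((d_x\phi)(d_x\phi)^*\big)$, negligibility of the complex-critical locus on the source and on almost every fiber, and the $h=\mathbf 1_{B\setminus\Sigma}/J_{2m}(\phi)$ trick to convert the coarea identity into a density formula. You also correctly flag and resolve the notational point that, for $n>m$, the literal $\det\big((d_x\phi)^*(d_x\phi)\big)$ is an $n\times n$ determinant of a rank-$m$ matrix and hence vanishes, so it must be read as $\det\big((d_x\phi)(d_x\phi)^*\big)$ (equivalently, the determinant of the induced map on the normal space, or by Cauchy--Binet the sum $\sum_I|M_I|_{\mathbb C}$ of squared minors, which is how the paper itself uses this quantity later in the proof of Theorem~\ref{thm:upper-C}).
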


Let $\phi:X\rightarrow Y$ be a locally dominant analytic map between
complex analytic manifolds, and let $x_{0}\in X$. Since the claim
is local, we may assume that $Y=\C^{m}$ and $X=B$ is a ball in $\C^{n}$.
To show (\ref{eq:upper bound on epsilon}), it is enough to bound
$\epsilon_{\star}(\phi_{*}\mu_{B})$, where $B$ is of arbitrarily
small radius around $x_{0}$. Denote $G(x):=\det\left((d_{x}\phi)^{*}(d_{x}\phi)\right)$.
By Lemma \ref{lem:coarea}, we have: 
\[
g(y)=\int_{\phi^{-1}(y)}\frac{dH_{2(m-n)}(x)}{G(x)}
\]
whence 
\begin{equation}
\int_{Y}g(y)^{1+\epsilon}dy=\int_{X}\left[\int_{\phi^{-1}(\phi(x))}\frac{dH_{2(m-n)}(x')}{G(x')}\right]^{\epsilon}dx.\label{eq:overC}
\end{equation}

We apply Theorem \ref{thm:analytic resolution of sing} to the ideal
$\mathcal{J}_{\phi}$. Let $\pi:\widetilde{X}\to B$ be the corresponding
resolution. Without loss of generality, we can assume that $\widetilde{X}\subseteq\mathbb{C}^{m}$
is an open subset, so that 
\[
\det d_{\widetilde{x}}\pi=v(\widetilde{x})\widetilde{x}_{1}^{b_{1}}\cdots\widetilde{x}_{n}^{b_{n}}~,\quad v(0)\neq0,
\]
and each of the $m\times m$ minors $M_{I}$ of $d_{\widetilde{x}}\phi$
satisfies 
\[
M_{I}(\pi(\widetilde{x}))=u_{I}(\widetilde{x})\widetilde{x}_{1}^{a_{1}}\cdots\widetilde{x}_{n}^{a_{n}},
\]
where at least one of the functions $u_{I}$ does not vanish at $0$.

We first perform the change of variables $x=\pi(\widetilde{x})$ in
the external integral in (\ref{eq:overC}), yielding 
\[
\int_{Y}g(y)^{1+\epsilon}dy=\int_{\widetilde{X}}\left|\det d_{\widetilde{x}}\pi\right|_{\C}\left[\int_{\phi^{-1}(\phi(\pi(\widetilde{x})))}\frac{dH_{2(m-n)}(x')}{G(x')}\right]^{\epsilon}d\widetilde{x},
\]
and then perform the change of variables $x'=\pi(t)$ in the internal
integral. Since the map $\pi$ is continuously differentiable, and
$B$ is compact, the product of the singular values of the restriction
of $d_{t}\pi$ to any subspace is bounded from below in absolute value
by a number times the absolute value of the determinant of $d_{t}\pi$.
Thus, 
\begin{equation}
\int_{Y}g(y)^{1+\epsilon}dy\gtrsim\int_{\widetilde{X}}\left|\det d_{\widetilde{x}}\pi\right|_{\C}\left[\int_{\widetilde{\phi}^{-1}(\widetilde{\phi}(\widetilde{x}))}\frac{\left|\det d_{t}\pi\right|_{\C}\,dH_{2(m-n)}(t)}{G(\pi(t))}\right]^{\epsilon}d\widetilde{x},\label{eq:int-tilde}
\end{equation}
where $\widetilde{\phi}:=\phi\circ\pi$. By the Cauchy\textendash Binet
formula, the right-hand side is equal to 
\[
\int_{\widetilde{X}}\left|\det d_{\widetilde{x}}\pi\right|_{\C}\left[\int_{\widetilde{\phi}^{-1}(\widetilde{\phi}(\widetilde{x}))}\frac{\left|v(t)\right|_{\C}\left|t_{1}\right|_{\C}^{b_{1}}\cdot...\cdot\left|t_{n}\right|_{\C}^{b_{n}}\,dH_{2(m-n)}(t)}{\sum_{I}\left|u_{I}(t)\right|_{\C}\left|t_{1}\right|_{\C}^{a_{1}}\cdot...\cdot\left|t_{n}\right|_{\C}^{a_{n}}}\right]^{\epsilon}d\widetilde{x}.
\]
We bound the internal integral from below as follows. For each $\widetilde{x}\in\widetilde{X}$,
set 
\[
Q(\widetilde{x}):=\left\{ t\in\widetilde{X}\,\,:\,\,\sum_{j=1}^{n}\frac{\left|t_{j}-\widetilde{x}_{j}\right|_{\C}}{\left|\widetilde{x}_{j}\right|_{\C}}\leq\frac{1}{4}\right\} 
\]
Then 
\begin{align}
 & \int_{\widetilde{\phi}^{-1}(\widetilde{\phi}(\widetilde{x}))}\frac{\left|v(t)\right|_{\C}\left|t_{1}\right|_{\C}^{b_{1}}\cdot...\cdot\left|t_{n}\right|_{\C}^{b_{n}}\,dH_{2(m-n)}(t)}{\sum_{I}\left|u_{I}(t)\right|_{\C}\left|t_{1}\right|_{\C}^{a_{1}}\cdot...\cdot\left|t_{n}\right|_{\C}^{a_{n}}}\nonumber \\
\gtrsim & \int_{\widetilde{\phi}^{-1}(\widetilde{\phi}(\widetilde{x}))\cap Q(\widetilde{x})}\left|t_{1}\right|_{\C}^{b_{1}-a_{1}}\cdot...\cdot\left|t_{n}\right|_{\C}^{b_{n}-a_{n}}\,dH_{2(m-n)}(t)\nonumber \\
\gtrsim & \left|\widetilde{x}_{1}\right|_{\C}^{b_{1}-a_{1}}\cdot...\cdot\left|\widetilde{x}_{n}\right|_{\C}^{b_{n}-a_{n}}\,H_{2(n-m)}(\widetilde{\phi}^{-1}(\widetilde{\phi}(\widetilde{x}))\cap Q(\widetilde{x})).\label{eq:lower bound}
\end{align}

\begin{claim}
\label{cl:lelong}For any $\widetilde{x}\in\widetilde{X}$, 
\[
H_{2(n-m)}(\widetilde{\phi}^{-1}(\widetilde{\phi}(\widetilde{x}))\cap Q(\widetilde{x}))\gtrsim\min_{|I|=n-m}\prod_{i\in I}\left|\widetilde{x}_{i}\right|_{\C}.
\]
\end{claim}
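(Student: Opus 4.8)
The plan is to reduce, by a linear change of coordinates depending on $\widetilde x$, to the classical setting of Lelong's monotonicity theorem for complex analytic sets, and then to transport the resulting volume estimate back through the change of coordinates using the area formula together with a Cauchy--Binet computation of the relevant Jacobian. I would begin by clearing away the trivial cases: if $n=m$ the right-hand side is the empty product $1$, whereas $\widetilde x$ itself lies in $\widetilde\phi^{-1}(\widetilde\phi(\widetilde x))\cap Q(\widetilde x)$, so $H_0\ge 1$; hence assume $n>m$ and put $k:=n-m\ge 1$. If some coordinate $\widetilde x_j$ vanishes, then any index set $I\ni j$ of size $k$ makes $\prod_{i\in I}|\widetilde x_i|_\C=0$, so there is nothing to prove; hence assume every $\widetilde x_j\neq 0$ (this is anyway the only case that contributes to the ambient integral, since $\{\widetilde x_j=0\}$ is $\mu_\C^n$-null).

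Next I would pass to the affine bijection $A\colon\C^n\to\C^n$, $A(t)_j=(t_j-\widetilde x_j)/|\widetilde x_j|$, whose linear part is the real diagonal matrix $L=\operatorname{diag}(|\widetilde x_1|^{-1},\dots,|\widetilde x_n|^{-1})$. Since $|t_j-\widetilde x_j|_\C/|\widetilde x_j|_\C=|A(t)_j|^2$, the map $A$ carries $Q(\widetilde x)$ exactly onto the closed round ball $\bar B(0,\tfrac12)\subseteq\C^n$ and sends $\widetilde x$ to the origin. Setting $Z:=\widetilde\phi^{-1}(\widetilde\phi(\widetilde x))$, every component of $Z$ through $\widetilde x$ has dimension at least $k$; if one has dimension $>k$ then $H_{2k}(Z\cap Q(\widetilde x))=\infty$ already and we are done, so I would fix a component $W\ni\widetilde x$ of dimension exactly $k$. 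Then $A(W)$ is a pure $k$-dimensional analytic subset of $A(\widetilde X)$ through the origin, and Lelong's monotonicity theorem yields a bound $H_{2k}\big(A(W)\cap\bar B(0,\tfrac12)\big)\ge c_k>0$ with $c_k$ depending only on $k$. (Strictly, Lelong's theorem needs $\bar B(0,\tfrac12)\subseteq A(\widetilde X)$; since the claim is only needed for $\widetilde x$ in a relatively compact part of the chart, one may instead use a concentric sub-ball of radius bounded below uniformly in $\widetilde x$, which only affects the implied constant.)

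To transport this back I would use that $A$ is a global biholomorphism of $\C^n$ and that $H_{2k}$ is carried by the smooth locus of $W$, so the area formula (cf.\ \cite{Fed69}) gives
\[
H_{2k}\big(A(W)\cap\bar B(0,\tfrac12)\big)=\int_{W\cap Q(\widetilde x)}J_L\big(T_tW\big)\,dH_{2k}(t),
\]
where, for a complex $k$-plane $V\subseteq\C^n$, $J_L(V):=\det\big((L|_V)^{*}(L|_V)\big)$ is the $2k$-dimensional Jacobian factor of $L$ along $V$. Representing $V$ by an $n\times k$ matrix $B$ with orthonormal columns, Cauchy--Binet gives
\[
J_L(V)=\sum_{|I|=k}\Big(\prod_{i\in I}|\widetilde x_i|^{-2}\Big)|\det B_I|^{2}\ \le\ \max_{|I|=k}\prod_{i\in I}|\widetilde x_i|^{-2}=\Big(\min_{|I|=k}\prod_{i\in I}|\widetilde x_i|_\C\Big)^{-1},
\]
since $\sum_{|I|=k}|\det B_I|^{2}=\det(B^{*}B)=1$. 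Combining the two displays with $W\subseteq Z$ gives $c_k\le\big(\min_{|I|=k}\prod_{i\in I}|\widetilde x_i|_\C\big)^{-1}H_{2k}(Z\cap Q(\widetilde x))$, which rearranges to the claim.

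The hard part — and the reason this argument is special to $F=\C$ — is the interplay between Lelong's monotonicity theorem, which provides the uniform constant $c_k$ but has no analogue over $\R$ or $\Qp$, and the \emph{sharp} Cauchy--Binet estimate on the Jacobian of the rescaling: it is precisely this sharpness that produces the monomial expression $\min_{|I|=k}\prod_{i\in I}|\widetilde x_i|_\C$, which is exactly what one needs so that, after substitution into \eqref{eq:lower bound} and \eqref{eq:int-tilde}, one is left with a monomial integral matching $\operatorname{lct}_\C(\mathcal J_\phi;x_0)=\min_i\frac{b_i+1}{a_i}$. A secondary, more bookkeeping-type issue is ensuring the rescaled fibre genuinely sits inside a round ball of radius bounded below, so that Lelong's theorem applies with a constant independent of $\widetilde x$.
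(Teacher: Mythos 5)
Your proposal is correct and is essentially the same argument as the paper's: an affine rescaling taking the polydisc-like set $Q(\widetilde x)$ onto a Euclidean ball of radius $\tfrac12$ and $\widetilde x$ to the origin, Lelong's monotonicity theorem plus the fact that the Lelong number is $\ge 1$ to produce a uniform lower bound on the $2(n-m)$-volume of the rescaled fibre, and then a Cauchy--Binet estimate on the Jacobian of the diagonal linear part to transport the bound back and extract $\min_{|I|=n-m}\prod_{i\in I}|\widetilde x_i|_{\C}$. The paper instead uses the complex rescaling $T(\widetilde x')_j=\widetilde x_j(1+\widetilde x'_j)$ and states the volume-change inequality $H_{2(n-m)}(TA)\ge\min_{|I|=n-m}\prod_{i\in I}|\widetilde x_i|_{\C}\,H_{2(n-m)}(A)$ directly rather than deriving it from the area formula, but these are the same singular-value/Cauchy--Binet computation in disguise; your additional care about components of excess dimension, vanishing coordinates, and the relatively compact chart is sound but inessential.
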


\begin{proof}[Proof of Claim~\ref{cl:lelong}]
Let $T:\widetilde{X}\to\widetilde{X}$ be the affine map given by
$T(\widetilde{x}')_{j}=\widetilde{x}_{j}(1+\widetilde{x}'_{j})$.
Then $Q(\widetilde{x})=TB_{\frac{1}{2}}$, where $B_{\frac{1}{2}}$
is a ball of radius $\frac{1}{2}$ centered at the origin. By a theorem
of Lelong \cite{Lel57} (see also \cite{Thi67,GL86}), for any analytic
set $M$ of pure dimension $n-m$, and any $r>0$, one has 
\[
\frac{H_{2(n-m)}(M\cap B_{r})}{H_{2(n-m)}(M_{0}\cap B_{r})}\geq\lim_{\rho\to+0}\frac{H_{2(n-m)}(M\cap B_{\rho})}{H_{2(n-m)}(M_{0}\cap B_{\rho})},
\]
where $M_{0}$ is a linear subspace of the same dimension $n-m$.
The limit on the right-hand side is the \emph{Lelong number} of $M$,
which is the algebraic multiplicity of $M$ at $\widetilde{x}$; it
is strictly positive; thus 
\[
H_{2(n-m)}(M\cap B_{r})\gtrsim r^{2(n-m)}.
\]
Applying this estimate to $M=T^{-1}(\widetilde{\phi}^{-1}(\widetilde{\phi}(\widetilde{x})))$
and observing that 
\[
H_{2(n-m)}(TA)\geq\min_{|I|=n-m}\prod_{i\in I}\left|\widetilde{x}_{i}\right|_{\C}\cdot H_{2(n-m)}(A),
\]
for any Borel set $A\subset\widetilde{X}$, we obtain the claimed
assertion. 
\end{proof}
We now proceed with the proof of the theorem. Using Claim \ref{cl:lelong},
we deduce 
\[
\int_{\widetilde{\phi}^{-1}(\widetilde{\phi}(\widetilde{x}))}\frac{\left|\det d_{t}\pi\right|_{\C}\,dH_{2(m-n)}(t)}{G(\pi(t))}\gtrsim\prod_{i=1}^{n}\left|\widetilde{x}_{i}\right|_{\C}^{b_{i}-a_{i}+1},
\]
whence, 
\[
\int_{Y}g(y)^{1+\epsilon}dy\gtrsim\int_{\widetilde{X}}\prod_{i=1}^{n}\left|\widetilde{x}_{i}\right|_{\C}^{b_{i}+\epsilon(b_{i}-a_{i}+1)}d\widetilde{x}.
\]
This integral diverges whenever there is an index $i$ such that 
\[
b_{i}+\epsilon(b_{i}-a_{i}+1)\leq-1,
\]
i.e. 
\[
\epsilon_{\star}(\phi;x)\leq\min_{i}\left\{ \frac{b_{i}+1}{a_{i}-b_{i}-1}\,\,:\,\,b_{i}+1<a_{i}\right\} .
\]
On the other hand,

\[
\operatorname{lct}_{\C}(\mathcal{J}_{\phi};x_{0})=\min_{i}\left\{ \frac{b_{i}+1}{a_{i}}\right\} 
\]
and under the assumption that this quantity is strictly less than
$1$ we have an index $i$ such that 
\[
\operatorname{lct}_{\C}(\mathcal{J}_{\phi};x_{0})=\frac{b_{i}+1}{a_{i}}<1.
\]
For this index, 
\[
\frac{b_{i}+1}{a_{i}-b_{i}-1}=\frac{\operatorname{lct}_{\mathbb{C}}(\mathcal{J}_{\phi};x_{0})}{1-\operatorname{lct}_{\mathbb{C}}(\mathcal{J}_{\phi};x_{0})}.
\]
This concludes the proof of Theorem \ref{thm:upper-C}.

\section{\label{sec:Geometric-characterization-of}Geometric characterization
of $\epsilon_{\star}=\infty$}

From now on let $K$ be a number field and let $\mathcal{O}_{K}$
be its ring of integers. Let $\operatorname{Loc}_{0}$ be the collection
of all non-Archimedean local fields $F$ which contain $K$. We use
the notation $\operatorname{Loc}_{0,\gg}$, for the collection of
$F\in\operatorname{Loc}_{0}$ with large enough residual characteristic,
depending on some given data. Throughout this section, we denote by
$B(y,r)$ a ball of radius $r$ centered at $y\in Y(F)$.

In this section we focus on algebraic morphisms $\varphi:X\rightarrow Y$
between algebraic $K$-varieties. We would like to characterize morphisms
$\varphi$ where $\epsilon_{\star}(\varphi_{F})=\infty$ for certain
collections of local fields $F$, in terms of the singularities of
$\varphi$. In order to effectively do this, it is necessary to consider
an ``algebraically closed'' collection of local fields, such as
the following: 
\begin{itemize}
\item $\{\C\}$. 
\item $\{F\}_{F\in\operatorname{Loc}_{0,\gg}}$, or $\{F\}_{F\in\operatorname{Loc}_{0}}$. 
\end{itemize}
Aizenbud and Avni have shown the following characterization of the
(FRS) property: 
\begin{thm}[{{{\cite[Theorem 3.4]{AA16}}}}]
\label{thm:analytic criterion of the (FRS) property}Let $\varphi:X\rightarrow Y$
be a map between smooth $K$-varieties. Then $\varphi$ is (FRS) if
and only if for each $F\in\operatorname{Loc}_{0}$ and every $\mu\in\mathcal{M}_{c}^{\infty}(X(F))$,
one has $\left(\varphi_{F}\right)_{*}(\mu)\in\mathcal{M}_{c,\infty}(Y(F))$. 
\end{thm}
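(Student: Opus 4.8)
The statement equates an algebro-geometric condition on $\varphi$ (flatness together with rational singularities of all fibers) with an analytic one (the $L^\infty$-boundedness of every pushforward $(\varphi_F)_*\mu$, $\mu\in\mathcal M_c^\infty(X(F))$, over every non-Archimedean local field $F\supseteq K$). Since both conditions are \'etale-local on $X$ and on $Y$, I would first reduce to the case $Y=\mathbb A^n$ with $X$ a smooth affine $K$-variety admitting a smooth model over $\mathcal O_K$, and, after shrinking the support, reduce the analytic condition to boundedness of the density of $(\varphi_F)_*\mu$ for $\mu$ the normalized Haar measure on $X(\mathcal O_F)$. The reverse implication forces one to vary $F$, so this reduction should be carried out uniformly in $F$.

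\textbf{The dictionary between densities and jet counts.} The core of the proof is to express the Radon--Nikodym density $g_F$ of $(\varphi_F)_*\mu$ at an integral point $y$ via point counts of the fibers. With $d=\dim X$ and $\varpi_F$ a uniformizer, one expects
\[
g_F(y)\ \sim\ \lim_{k\to\infty} q_F^{-k(d-n)}\,\#\bigl\{x\in X(\mathcal O_F/\varpi_F^{k}\mathcal O_F)\ :\ \varphi(x)\equiv y \bmod \varpi_F^{k}\bigr\},
\]
and the right-hand count is, up to the usual smooth-model bookkeeping, $\#J_{k-1}(X_y)(\mathbb F_{q_F})$, where $J_{k-1}(X_y)$ is the $(k-1)$-st jet scheme of the fiber $X_y=\varphi^{-1}(y)$. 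Thus $\varphi_F$ is an $L^\infty$-map if and only if the normalized jet counts $q_F^{-k(d-n)}\#J_{k-1}(X_y)(\mathbb F_{q_F})$ stay bounded, uniformly in $k$ and in $y$.

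\textbf{Passing to singularity invariants.} On the locus where $\varphi$ is flat, the fibers $X_y$ are local complete intersections of pure dimension $d-n$, so Mustata's jet-scheme criterion applies: $X_y$ has rational singularities iff $\dim J_m(X_y)\le(m+1)(d-n)$ for every $m$. Combined with Lang--Weil, made uniform over the constructible family of fibers --- via Denef--Loeser rationality of the arithmetic Poincar\'e series $\sum_m \#J_m(X_y)(\mathbb F_q)T^m$ with controlled denominators, or via model-theoretic uniformity in the style of Cluckers--Loeser --- this dimension bound becomes the uniform estimate $\#J_m(X_y)(\mathbb F_q)\le Cq^{m(d-n)}$ with $C$ independent of $m$, $y$ and $F$, giving the implication "(FRS) $\Rightarrow$ $L^\infty$ for all $F$". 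Conversely: if $\varphi$ is not flat, some fiber has dimension $>d-n$, forcing $\#X_y(\mathcal O_F/\varpi_F^{k}\mathcal O_F)\gg q_F^{k\dim X_y}$ and hence an unbounded density along $\varphi(X_y)$; and if $\varphi$ is flat but some fiber fails to have rational singularities, Mustata produces an $m$ with $\dim J_m(X_y)>(m+1)(d-n)$, and choosing $F$ of large enough residue characteristic (so Lang--Weil applies to an excess-dimensional component) again blows up the density, contradicting $L^\infty$-ness.

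\textbf{Expected main obstacle.} The conceptual difficulty is \emph{uniformity}: the jet-scheme point-count bounds must hold with a constant independent of the jet level $m$, of the integral point $y$ ranging over an infinite base, and of the local field $F$. This is exactly where the heavier machinery is unavoidable --- the explicit pole structure in the Denef--Loeser formula together with semicontinuity of log-canonical thresholds in families, or cell decomposition and elimination of imaginaries in the Denef--Pas language. A secondary, more technical point is to make the "density $\sim$ normalized jet count" comparison rigorous: Hensel lifting over the smooth locus of $X_y$ is routine, but the contribution of the singular locus of the fibers (and of points of $X$ where $\varphi$ drops rank) must be controlled, which is again handled by the same resolution-of-singularities / motivic-integration input.
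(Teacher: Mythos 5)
The paper does not actually prove this statement; it is stated with the citation to \cite[Theorem 3.4]{AA16} and its Archimedean analogue is discussed only in Corollary \ref{cor:characterization of Linfty}. That said, your sketch follows essentially the same route as the original proof in \cite{AA16}: reduce to $Y=\mathbb A^n$ with a smooth integral model; express the density of $(\varphi_F)_*\mu$ at $y\in Y(\mathcal O_F)$ as a normalized limit of point counts over $\mathcal O_F/\varpi_F^k$, which, after Hensel/Greenberg-type lifting, identifies with the jet counts $\#J_{k-1}(X_y)(\mathbb F_{q_F})$; translate $L^\infty$-boundedness into the dimension bound $\dim J_m(X_y)\le(m+1)(d-n)$ via Lang--Weil, and then invoke Musta\c{t}\u{a}'s jet-scheme criterion for rational singularities of local complete intersections (flatness over a smooth base supplies the LCI hypothesis). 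You have also correctly singled out the genuine difficulty, namely uniformity of the Lang--Weil constants over the jet level $m$, the base point $y$ ranging over an infinite set, and the field $F$; in \cite{AA16} this is handled by Denef's rationality of local zeta functions together with Cluckers--Loeser-style uniformity, exactly as you indicate. Two small cautions if you were to make this rigorous: (i) the ``density $\sim$ normalized jet count'' dictionary requires working with the canonical measure on a fixed smooth $\mathcal O_K$-model and is clean only for residue characteristic large relative to the model, so one must separately treat small primes by arguing one field at a time; and (ii) in the converse direction the ``if not flat'' case must also account for the possibility that the offending excess-dimensional fiber sits over a non-rational point of $Y$, which is where Lang--Weil over an extension of the residue field enters. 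Neither affects the correctness of your overall strategy, which coincides with the paper's intended (cited) proof.
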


The Archimedean counterpart of this theorem was studied in \cite{Rei},
where it was shown that given an (FRS) morphism $\varphi$, then $\varphi_{\R}$
and $\varphi_{\C}$ are $L^{\infty}$-morphisms. For the other direction,
the non-Archimedean proof of \cite[Theorem 3.4]{AA16} (see \cite[Section 3.7]{AA16}),
can be easily adapted to the complex case, with less complications
due to the fact that $\C$ is algebraically closed. We arrive at the
following characterization of $L^{\infty}$-morphisms. 
\begin{cor}
\label{cor:characterization of Linfty}Let $\varphi:X\rightarrow Y$
be a map between smooth $K$-varieties. Then the following are equivalent: 
\begin{enumerate}
\item $\varphi$ is (FRS). 
\item For every local field $F$ containing $K$, the map $\varphi_{F}$
is an $L^{\infty}$-morphism. 
\item For each $F\in\operatorname{Loc}_{0,\gg}$, the map $\varphi_{F}$
is an $L^{\infty}$-map. 
\item The map $\varphi_{\C}$ is an $L^{\infty}$-map. 
\end{enumerate}
\end{cor}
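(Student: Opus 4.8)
The plan is to establish the cycle $(1)\Rightarrow(2)\Rightarrow(3)\Rightarrow(1)$ together with $(2)\Rightarrow(4)\Rightarrow(1)$. The implications $(2)\Rightarrow(3)$ and $(2)\Rightarrow(4)$ are immediate, as they only restrict the class of local fields. For $(1)\Rightarrow(2)$: when $F$ is non-Archimedean this is the ``only if'' half of Theorem~\ref{thm:analytic criterion of the (FRS) property}, and when $F=\R$ or $F=\C$ it is the content of \cite{Rei}, where it is shown that an (FRS) morphism becomes an $L^\infty$-map over $\R$ and over $\C$. For $(3)\Rightarrow(1)$ I would invoke not merely the statement but the proof of Theorem~\ref{thm:analytic criterion of the (FRS) property}: Aizenbud and Avni show that a morphism which is \emph{not} (FRS) already fails to be an $L^\infty$-map over $F$ for every $F\in\operatorname{Loc}_{0,\gg}$, and the contrapositive of this is exactly $(3)\Rightarrow(1)$.

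Thus the one implication requiring genuine work is $(4)\Rightarrow(1)$, for which the plan is to transcribe the proof of \cite[Theorem 3.4]{AA16} (see \cite[Section 3.7]{AA16}) to the complex setting, arguing contrapositively: if $\varphi$ is not (FRS), then $\varphi_\C$ is not an $L^\infty$-map. Passing to affine charts, we may assume $X$ and $Y$ are smooth affine $\C$-varieties and that the (FRS) condition fails near some $x_0\in X(\C)$. The field-independent input is the jet-scheme criterion for rational singularities of local complete intersections used in \cite{AA16}: $\varphi$ is not (FRS) precisely when there exist $y_0\in Y(\C)$ and $\ell\in\N$ for which the $\ell$-th jet scheme of the scheme-theoretic fibre $X_{y_0}=\varphi^{-1}(y_0)$ has dimension exceeding $(\ell+1)(\dim X-\dim Y)$; this single condition simultaneously encodes non-flatness of $\varphi$ and non-rationality of a fibre. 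Fixing such a pair $(y_0,\ell)$ and a small compact ball $B_0\subseteq X(\C)$ whose closure meets the image in $X_{y_0}$ of a top-dimensional component of that jet scheme, I would show --- by pulling back along a log-resolution adapted to the fibres of $\varphi$ near $\varphi^{-1}(y_0)$, monomializing the coarea integral of Lemma~\ref{lem:coarea}, and using Lelong-type lower bounds on fibre volumes exactly as in Claim~\ref{cl:lelong} and Section~\ref{sec:upper bound} --- that the excess jet dimension forces
\[
\frac{\operatorname{vol}_{\C}\big(\varphi^{-1}(B(y_0,\epsilon))\cap B_0\big)}{\operatorname{vol}_{\C}\big(B(y_0,\epsilon)\big)}\ \longrightarrow\ \infty\qquad(\epsilon\to 0).
\]
Writing locally $(\varphi_\C)_*(\mu_{\C}^{\dim X}|_{B_0})=g(y)\,\mu_{\C}^{\dim Y}$, this divergence is incompatible with $g\in L^\infty$, since $\int_{B(y_0,\epsilon)}g\,d\mu_{\C}\le\|g\|_\infty\,\operatorname{vol}_{\C}(B(y_0,\epsilon))$; hence $\varphi_\C$ is not an $L^\infty$-map, as required. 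As the discussion before the corollary indicates, passing to $\C$ only simplifies the argument of \cite{AA16}: since $\C$ is algebraically closed, geometric and arithmetic fibres coincide, so there is no Galois descent to perform and no residue field to track, and the complex volume asymptotics are exact, free of the Lang--Weil error terms accompanying $p$-adic point counts. (One checks, by the contrapositive of Theorem~\ref{thm:upper-C}, that $(4)$ already forces $\operatorname{lct}_{\C}(\mathcal{J}_\varphi;x)\ge 1$ for every $x$; but this is strictly weaker than (FRS) --- for instance $\varphi(x)=x^{2}$ has $\operatorname{lct}_{\C}(\mathcal{J}_\varphi;0)=1$ yet is not (FRS) --- so the jet-scheme analysis is genuinely needed.)

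The step I expect to be the main obstacle is the complex counterpart of Aizenbud--Avni's dictionary relating the pushforward density $g$ to the number of $\ell$-jets of the fibres of $\varphi$: over $\Qp$ this is essentially an identity (the density is a limit of normalized counts of points of the fibres modulo $p^{\ell+1}$), whereas over $\C$ one must instead extract from the resolution data the exact power of $\epsilon$ governing $\operatorname{vol}_{\C}(\varphi^{-1}(B(y_0,\epsilon)))$, and do so uniformly in $y_0$, so that the failure of the $L^\infty$ property is witnessed by a single $\mu\in\mathcal{M}_c^\infty(X(\C))$. Both the monomialization (via Theorem~\ref{thm:analytic resolution of sing}) and the Lelong monotonicity estimates needed for this are already in play in Section~\ref{sec:upper bound}, so --- consistently with the remark preceding the corollary --- the remaining work is a routine, if somewhat lengthy, transcription.
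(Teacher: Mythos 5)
Your high-level plan — the cycle $(1)\Rightarrow(2)\Rightarrow(3)\Rightarrow(1)$ together with $(1)\Rightarrow(2)\Rightarrow(4)\Rightarrow(1)$, with $(1)\Rightarrow(2)$ via Theorem~\ref{thm:analytic criterion of the (FRS) property} and \cite{Rei}, and the remaining implications by adapting the proof of \cite[Theorem 3.4]{AA16} — is exactly the approach the paper takes. Your remark that $(3)\Rightarrow(1)$ requires the \emph{proof} (which witnesses failure over all $F$ of large residue characteristic), not merely the statement, of Theorem~\ref{thm:analytic criterion of the (FRS) property}, is a correct and worthwhile observation.

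However, your implementation of $(4)\Rightarrow(1)$ has a genuine gap at its core. You claim that ``$\varphi$ is not (FRS) precisely when there exist $y_0$ and $\ell$ for which $\dim J_\ell(X_{y_0,\varphi}) > (\ell+1)(\dim X-\dim Y)$'' and that this single dimension count ``simultaneously encodes non-flatness and non-rationality of a fibre.'' This is false. By Theorem~\ref{thm:log canonical threshold and jet schemes}, the dimension condition you wrote characterizes the failure of \emph{jet-flatness} (equivalently, of being flat with fibres of semi-log-canonical singularities — Lemma~\ref{lemma: jet flat vs (FRS) and (FSLCS)}(2)), which is strictly weaker than the failure of (FRS). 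The correct jet-scheme criterion for (FRS), recorded in Lemma~\ref{lemma: jet flat vs (FRS) and (FSLCS)}(1), is that $J_m(\varphi)$ be flat with \emph{locally integral} fibres for all $m$; local integrality is not captured by dimension alone. The map $\varphi(x,y)=xy$ — discussed at length right before the corollary in the paper — is the canonical counterexample: it is jet-flat (indeed $\operatorname{lct}(\mathbb{A}^2,\{xy=0\})=1$, so every jet scheme has exactly the expected dimension $(\ell+1)$), yet it is not (FRS) because $\{xy=0\}$ is not normal, and correspondingly $\varphi_\C$ is not an $L^\infty$-map. Your proposed argument would never detect such a morphism: since the jet dimension is never in excess, the Lelong/coarea mechanism you describe yields no surplus power of $\epsilon$, and indeed the pushforward density here diverges only \emph{logarithmically} ($g(y)\sim\log(1/|y|_{\C})$), which a purely power-law volume estimate cannot see. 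In other words, your proposed proof of $(4)\Rightarrow(1)$ only proves (4) $\Rightarrow$ jet-flat, which is the content of a different result (Theorem~\ref{thm:characterization of Lq for all q-intro}), not of this corollary.

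To close the gap one must detect reducibility (or non-reducedness) of jet scheme fibres, not merely an excess dimension. Concretely: after resolving, a jet-flat but non-(FRS) morphism produces several exceptional divisors attaining the extremal ratio $(b_i+1)/a_i = 1$, and their transversal intersection is what generates the logarithmic term in the asymptotic expansion of the density (Igusa/Denef--Loeser phenomenology, cf.\ Theorem~\ref{thm:asymptotic expansion}), rather than a different power of $\epsilon$. The Lelong lower bound gives only the leading power of $\epsilon$ and is blind to these log factors. Whatever form the adaptation of \cite[Section 3.7]{AA16} ultimately takes, it must resolve this finer multiplicity/irreducibility information, not just the jet dimension.
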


Our goal is to characterize the weaker property that $\epsilon_{\star}(\varphi_{F})=\infty$
over $F=\C$, or over all $F\in\operatorname{Loc}_{0}$ (Theorem~\ref{thm:characterization of Lq for all q-intro},
restated below as Theorem~\ref{thm:characterization of Lq for all q}).
Let us first present an example showing the (FRS) condition is too
strong for this purpose. Let $\varphi:\mathbb{A}_{\Q}^{2}\rightarrow\mathbb{A}_{\Q}^{1}$
be the map $\varphi(x,y)=xy$. Then: 
\begin{align*}
\varphi_{\Qp}^{-1}(B(0,p^{-k}))\cap\Z_{p}^{2} & =\left\{ (x,y)\in\Z_{p}^{2}:\val(x)+\val(y)\geq k\right\} \\
 & =\bigcup_{r\geq k,0\leq l\leq r}\left\{ (x,y)\in\Z_{p}^{2}:\val(x)=l,\val(y)=r-l\right\} .
\end{align*}
In particular, we have 
\begin{align*}
\frac{\varphi_{\Qp*}(\mu_{\Z_{p}}^{2})(B(0,p^{-k}))}{\mu_{\mathcal{O}_{F}}(B(0,p^{-k}))} & =p^{k}\cdot\sum_{r=k}^{\infty}\sum_{l=0}^{r}\left(\frac{p-1}{p}\right)^{2}p^{-l}p^{-(r-l)}\\
 & =p^{k}\cdot\left(\frac{p-1}{p}\right)^{2}\sum_{r=k}^{\infty}(r+1)p^{-r}\geq\left(\frac{p-1}{p}\right)^{2}(k+1).
\end{align*}
Hence, the measure $\varphi_{\Qp*}(\mu_{\Z_{p}}^{2})$ does not have
bounded density. On the other hand, since $\operatorname{lct}_{\Qp}(\varphi_{\Qp};0)=1$,
and by considering the asymptotic expansion of $\varphi_{\Qp*}(\mu_{\Z_{p}}^{2})$
as in Theorem \ref{thm:asymptotic expansion}, one sees: 
\begin{enumerate}
\item $\varphi_{\Qp*}(\mu_{\Z_{p}}^{2})$ explodes logarithmically around
$0$, i.e. the density of $\varphi_{\Qp*}(\mu_{\Z_{p}}^{2})$ behaves
like $\val(t)$, around $0$. 
\item $\epsilon_{\star}(\varphi_{\Qp})=\infty$ for every prime $p$. 
\end{enumerate}
By Corollary \ref{cor:characterization of Linfty}, $\varphi$ cannot
be (FRS), and indeed $\left\{ xy=0\right\} $ is not normal, so in
particular it does not have rational singularities.

In order to prove Theorem \ref{thm:characterization of Lq for all q-intro},
we recall the notion of jet schemes. Let $X\subseteq\mathbb{A}_{K}^{n}$
be an affine $K$-scheme whose coordinate ring is 
\[
K[x_{1},\dots,x_{n}]/(f_{1},\dots,f_{k}).
\]
Then the \emph{$m$-th jet scheme} $J_{m}(X)$ of $X$ is the affine
scheme with the following coordinate ring: 
\[
K[x_{1},\dots,x_{n},x_{1}^{(1)},\dots,x_{n}^{(1)},\dots,x_{1}^{(m)},\dots,x_{n}^{(m)}]/(\{f_{j}^{(u)}\}_{j=1,u=1}^{k,m}),
\]
where $f_{i}^{(u)}$ is the $u$-th formal derivative of $f_{i}$.

Let $\varphi:\mathbb{A}^{n_{1}}\rightarrow\mathbb{A}^{n_{2}}$ be
a morphism between affine spaces. Then the \emph{$m$-th jet morphism}
$J_{m}(\varphi):\mathbb{A}^{n_{1}(m+1)}\rightarrow\mathbb{A}^{n_{2}(m+1)}$
of $\varphi$ is given by formally deriving $\varphi$, $J_{m}(\varphi)=(\varphi,\varphi^{(1)},\dots,\varphi^{(m)})$.
Similarly, the $m$-th jet $J_{m}(\varphi)$ of a morphism $\varphi:X\rightarrow Y$
of affine $K$-schemes, is given by the formal derivative of $\varphi$.
Both $J_{m}(X)$ and $J_{m}(\varphi)$ can be generalized to arbitrary
$K$-schemes and $K$-morphisms (see \cite[Chapter 3]{CLNS18} and
\cite{EM09} for more details).

Given a subscheme $Z\subseteq X$ of a smooth variety $X$, with $Z$
defined by an ideal $J$, we denote by $\operatorname{lct}(X,Z):=\operatorname{lct}(J)$
the log-canonical threshold of the pair $(X,Z)$. Musta\c{t}\u{a}
showed that the log-canonical threshold $\operatorname{lct}(X,Z)$
can be characterized in terms of the growth rate of the dimensions
of the jet schemes of $Z$: 
\begin{thm}[{{{\cite[Corollary 0.2]{Mus02}, \cite[Corollary 7.2.4.2]{CLNS18}}}}]
\label{thm:log canonical threshold and jet schemes}Let $X$ be a
smooth, geometrically irreducible $K$-variety, and let $Z\subsetneq X$
be a closed subscheme. Then 
\[
\operatorname{lct}(X,Z)=\dim X-\sup_{m\geq0}\frac{\dim J_{m}(Z)}{m+1}.
\]
Furthermore, the supremum is achieved for $m$ divisible enough. In
particular, $\operatorname{lct}(X,Z)$ is a rational number. 
\end{thm}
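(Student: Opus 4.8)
The plan is to deduce the formula from Hironaka's theorem together with the change of variables formula for arc and jet spaces, in the form developed by Denef--Loeser and recorded in \cite[Chapter 7]{CLNS18}. Write $n=\dim X$, let $\mathfrak{a}\subseteq\mathcal{O}_X$ be the ideal sheaf of $Z$ (so that $\operatorname{lct}(X,Z)=\operatorname{lct}(\mathfrak{a})$ by definition), and fix a log resolution $f\colon Y\to X$ of the pair $(X,Z)$: a proper birational morphism with $Y$ smooth, an isomorphism over $X\setminus Z$, such that $f^{*}\mathfrak{a}=\mathcal{O}_Y(-\sum_{i\in S}N_iE_i)$ and $K_{Y/X}=\sum_{i\in S}k_iE_i$ with $E=\bigcup_{i\in S}E_i$ a simple normal crossings divisor. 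Recall the standard computation $\operatorname{lct}(\mathfrak{a})=\min_{i\in S}\frac{k_i+1}{N_i}$; in particular $\operatorname{lct}(X,Z)$ is rational, and the remaining content of the theorem is to extract this number from the dimensions of the jet schemes $J_m(Z)$.

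First I would reinterpret $J_m(Z)$ inside the arc space $J_\infty(X)$. Since $X$ is smooth, an $m$-jet of $X$ lies in $J_m(Z)$ precisely when it extends to an arc along which $\mathfrak{a}$ has order at least $m+1$; hence, writing $\psi_m\colon J_\infty(X)\to J_m(X)$ for the truncation and $\operatorname{Cont}^{\ge p}(\mathfrak{a})=\{\gamma:\operatorname{ord}_\gamma\mathfrak{a}\ge p\}$ for the contact loci,
\[
J_m(Z)=\psi_m\!\left(\operatorname{Cont}^{\ge m+1}(\mathfrak{a})\right),\qquad\text{so that}\qquad \dim J_m(Z)=(m+1)n-\operatorname{codim}\!\left(\operatorname{Cont}^{\ge m+1}(\mathfrak{a}),\,J_\infty(X)\right),
\]
the latter codimension being the codimension of the cylinder $\operatorname{Cont}^{\ge m+1}(\mathfrak{a})=\psi_m^{-1}(J_m(Z))$. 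Now I would compute this codimension through $f$. Every arc of $X$ not contained in $Z$ lifts uniquely to an arc $\delta$ of $Y$, and then $\operatorname{ord}_\gamma\mathfrak{a}=\sum_{i}N_i\,\nu_i$ where $\nu_i=\operatorname{ord}_\delta E_i$; for each multi-index $\nu\in\N^S$ with $\bigcap_{\nu_i>0}E_i\ne\varnothing$ the stratum $\operatorname{Cont}^{\nu}(E)\subseteq J_\infty(Y)$ has codimension $\sum_i\nu_i$, and the change of variables formula (equivalently, the Denef--Loeser description of the fibres of $J_{m}(Y)\to J_m(X)$, valid once $m$ is large relative to $\sum_i\nu_i k_i$) gives
\[
\operatorname{codim}\!\left(f_\infty\!\left(\operatorname{Cont}^{\nu}(E)\right),\,J_\infty(X)\right)=\sum_{i\in S}\nu_i(k_i+1).
\]
Partitioning $\operatorname{Cont}^{\ge m+1}(\mathfrak{a})$ according to $\nu$ (the arcs contained in $Z$, which may fail to lift, contributing only to truncations of lower dimension for $m$ large enough) yields, for all $m$ divisible enough,
\[
\dim J_m(Z)=(m+1)n-\min\!\Big\{\textstyle\sum_{i\in S}\nu_i(k_i+1)\ :\ \nu\in\N^S,\ \textstyle\sum_{i\in S}\nu_iN_i\ge m+1,\ \bigcap_{\nu_i>0}E_i\ne\varnothing\Big\}.
\]

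It then remains to carry out an elementary optimisation. Dividing by $m+1$,
\[
\dim X-\frac{\dim J_m(Z)}{m+1}=\frac{1}{m+1}\,\min_{\nu}\Big\{\textstyle\sum_{i\in S}\nu_i(k_i+1)\ :\ \textstyle\sum_{i\in S}\nu_iN_i\ge m+1\Big\}.
\]
For the lower bound, $\sum_i\nu_i(k_i+1)=\sum_i\nu_iN_i\cdot\frac{k_i+1}{N_i}\ge\big(\min_{j}\tfrac{k_j+1}{N_j}\big)\sum_i\nu_iN_i\ge(m+1)\min_{j}\tfrac{k_j+1}{N_j}$, so the right-hand side is $\ge\operatorname{lct}(\mathfrak{a})$ for every $m$, hence $\sup_m\frac{\dim J_m(Z)}{m+1}\le\dim X-\operatorname{lct}(X,Z)$. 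For the reverse inequality, fix $i_0$ with $\frac{k_{i_0}+1}{N_{i_0}}=\min_j\frac{k_j+1}{N_j}$; whenever $N_{i_0}\mid m+1$ the admissible multi-index $\nu=\frac{m+1}{N_{i_0}}e_{i_0}$ (admissible since $E_{i_0}\ne\varnothing$) attains the value $(m+1)\frac{k_{i_0}+1}{N_{i_0}}$, so the right-hand side equals $\operatorname{lct}(\mathfrak{a})$ for such $m$. Therefore $\sup_m\frac{\dim J_m(Z)}{m+1}=\dim X-\operatorname{lct}(X,Z)$, the supremum is attained whenever $N_{i_0}\mid m+1$ — in particular for every $m$ divisible enough — and $\operatorname{lct}(X,Z)=\min_i\frac{k_i+1}{N_i}\in\Q$.

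The main obstacle is the middle paragraph, namely the precise arc-space bookkeeping: one must justify the passage from $\dim J_m(Z)$ to the codimension of $\operatorname{Cont}^{\ge m+1}(\mathfrak{a})$, quote the change of variables formula in its ``truncated'' form relating these codimensions across $f$, and control the arcs of $X$ contained in $Z$, which need not lift to $Y$ — showing that for $m$ divisible enough their contribution does not exceed the strata coming from $f$. This is exactly where smoothness and irreducibility of $X$, the hypothesis $Z\subsetneq X$, and the divisibility condition enter; it is carried out in \cite{Mus02} (see also \cite[\S7]{CLNS18}). The optimisation in the last paragraph is then purely formal, and already contains the rationality statement and the fact that the supremum is a maximum.
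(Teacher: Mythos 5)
The paper does not prove this theorem; it is cited verbatim from \cite[Corollary 0.2]{Mus02} and \cite[Corollary 7.2.4.2]{CLNS18}, so there is no in-paper argument to compare against. Your sketch is a faithful reconstruction of the argument from those sources: pass from $J_m(Z)$ to the contact locus $\operatorname{Cont}^{\geq m+1}(\mathfrak a)$ via the (locally trivial) truncation $\psi_m$ over the smooth ambient $X$, transfer the codimension computation through a log resolution $f\colon Y\to X$ using the change-of-variables formula of Denef--Loeser (in its Ein--Lazarsfeld--Musta\c{t}\u{a} codimension form), then carry out the linear optimisation $\min_\nu\sum_i\nu_i(k_i+1)$ subject to $\sum_i\nu_iN_i\geq m+1$. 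The reduction of $\operatorname{lct}$ to $\min_i\frac{k_i+1}{N_i}$, the identity $\psi_m^{-1}(J_m(Z))=\operatorname{Cont}^{\geq m+1}(\mathfrak a)$, and the final optimisation are all correct as stated.

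One small tightening is worth flagging. As written, you establish the identity
\[
\dim J_m(Z)=(m+1)\dim X-\min\Big\{\textstyle\sum_i\nu_i(k_i+1):\textstyle\sum_i\nu_iN_i\geq m+1,\ \bigcap_{\nu_i>0}E_i\neq\varnothing\Big\}
\]
only ``for $m$ divisible enough,'' but then use it ``for every $m$'' to conclude $\sup_m\frac{\dim J_m(Z)}{m+1}\leq\dim X-\operatorname{lct}(X,Z)$. In fact the contact-locus codimension formula (ELM Theorem~2.1, or \cite[Theorem 7.2.3.2]{CLNS18}) holds with no divisibility restriction; the arcs contained in $Z$ form a negligible subset because $\dim J_\infty(Z_{\mathrm{red}})$ is dominated by $\dim J_\infty(X)$ for a proper closed $Z\subsetneq X$, which is exactly where the hypotheses enter. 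Divisibility is only needed, as you note, to make the supremum a maximum. Alternatively one can invoke Musta\c{t}\u{a}'s monotonicity lemma for $\frac{\dim J_m(Z)}{m+1}$ along the divisibility net. Either patch closes the only real loose end, and in any case you have correctly identified that paragraph as the nontrivial content that the paper, like your write-up, delegates to \cite{Mus02} and \cite[\S 7]{CLNS18}.
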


Note that $\operatorname{lct}(X,Z)$ depends on $Z$ and $\dim X$,
and neither on the ambient space $X$ not on the embedding of $Z$
in $X$.

We now introduce the following definitions from \cite{GHb}. For a
morphism $\varphi:X\rightarrow Y$ between schemes, we denote by $X_{y,\varphi}$
the scheme theoretic fiber of $\varphi$ over $y\in Y$. 
\begin{defn}
\label{def:jet flat}Let $\varphi:X\rightarrow Y$ be a morphism of
smooth, geometrically irreducible $K$-varieties, and let $\epsilon>0$. 
\begin{enumerate}
\item $\varphi$ is called \textit{$\epsilon$-flat} if for every $x\in X$
we have $\dim X_{\varphi(x),\varphi}\leq\dim X-\epsilon\dim Y$. 
\item $\varphi$ is called \textit{$\epsilon$-jet flat} if $J_{m}(\varphi):J_{m}(X)\rightarrow J_{m}(Y)$
is $\epsilon$-flat for every $m\in\N$. 
\item $\varphi$ is called\textit{ jet-flat} if it is $1$-jet flat. 
\end{enumerate}
In particular, note that $\varphi$ is flat if and only if it is $1$-flat.
\end{defn}

Note that by Theorem \ref{thm:log canonical threshold and jet schemes},
$\varphi$ is $\epsilon$-jet-flat if and only if $\operatorname{lct}(X,X_{\varphi(x),\varphi})\geq\epsilon\dim Y$
for all $x\in X$. We will need the following lemma to give a jet
scheme interpretation to rational and semi-log-canonical singularities
(from Theorem \ref{thm:log canonical threshold and jet schemes}). 
\begin{lem}
\label{lemma: jet flat vs (FRS) and (FSLCS)}Let $\varphi:X\rightarrow Y$
be a morphism of smooth $K$-varieties. Then: 
\begin{enumerate}
\item $\varphi$ is (FRS) if and only if $J_{m}(\varphi)$ is flat with
locally integral fibers, for every $m\geq0$ (in particular, $\varphi$
is jet-flat). 
\item $\varphi$ is jet-flat if and only if $\varphi$ is flat with fibers
of semi-log-canonical singularities. 
\end{enumerate}
\end{lem}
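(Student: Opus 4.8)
The plan is to deduce both equivalences from known jet-scheme characterizations of singularities, using that --- since $X$ and $Y$ are smooth --- every scheme-theoretic fiber $X_{\varphi(x)}$ of a flat $\varphi$ is a local complete intersection in $X$ of pure codimension $\dim Y$, so that for such a fiber rational singularities coincide with canonical singularities, as recalled above.

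For Item (1), I would first record that jet schemes commute with fiber products, so that for the constant $m$-jet $\widehat{y}\in J_m(Y)$ over $y\in Y$ one has $J_m(\varphi)^{-1}(\widehat{y})=J_m(X_y)$. By Musta\c{t}\u{a}'s theorem on jet schemes of local complete intersection canonical singularities \cite{Mus02}, a local complete intersection scheme $Z$ has canonical singularities if and only if $J_m(Z)$ is irreducible for every $m$, in which case each $J_m(Z)$ is a local complete intersection of the expected dimension $(m+1)\dim Z$, hence Cohen--Macaulay, and (being irreducible and, in characteristic zero, generically smooth) reduced; thus $J_m(Z)$ is integral, or, if $Z$ is disconnected, a finite disjoint union of integral schemes, i.e.\ locally integral. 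Next, since $Y$ is smooth, the dimension of a fiber of $J_m(\varphi)$ over an arbitrary jet, and whether it is locally integral, are governed by the fibers over constant jets --- equivalently by the schemes $J_m(X_y)$ --- by standard properties of jet schemes over a smooth base (see \cite{EM09,CLNS18,GHb}); the dimension part of this is exactly the input already used in the remark following Definition~\ref{def:jet flat} via Theorem~\ref{thm:log canonical threshold and jet schemes}. Granting this, the two implications are formal: if $\varphi$ is (FRS) then $\varphi$ is flat and every $X_y$ is a local complete intersection with canonical singularities, so every $J_m(X_y)$, hence every fiber of $J_m(\varphi)$, is locally integral of dimension $(m+1)(\dim X-\dim Y)=\dim J_m(X)-\dim J_m(Y)$; as $J_m(X)$ and $J_m(Y)$ are smooth, equidimensionality of the fibers forces $J_m(\varphi)$ to be flat (miracle flatness). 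Conversely, if $J_m(\varphi)$ is flat with locally integral fibers for every $m$, then $m=0$ gives that $\varphi$ is flat, and the constant-jet fibers show $J_m(X_y)$ is integral for all $m$ and $y$, whence each $X_y$ has canonical, equivalently rational, singularities and $\varphi$ is (FRS).

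For Item (2), again $m=0$ shows that a jet-flat morphism is flat, so in both directions I may assume $\varphi$ flat, its fibers then being local complete intersections of pure codimension $\dim Y$ in $X$. By the remark following Definition~\ref{def:jet flat}, $\varphi$ is jet-flat if and only if $\operatorname{lct}(X,X_{\varphi(x)})\ge\dim Y$ for every $x$; since $\operatorname{lct}(X,X_y)\le\operatorname{codim}(X_y,X)=\dim Y$ always holds, this is equivalent to the equality $\operatorname{lct}(X,X_y)=\dim Y$ for every $y$ in the image of $\varphi$. It then remains to recognize this equality as the statement that $X_y$ has semi-log-canonical singularities. For a local complete intersection subscheme $Z$ of pure codimension $c$ in a smooth variety $V$, the equality $\operatorname{lct}(V,Z)=c$ forces $Z$ to be reduced --- a component of multiplicity $\ge 2$ would drop the threshold below $c$ --- and, by the inversion-of-adjunction theorem of Ein--Musta\c{t}\u{a}--Yasuda \cite{EM09}, to have log-canonical singularities; being a reduced local complete intersection with log-canonical singularities, $Z$ is then automatically demi-normal, because its codimension-one localizations are one-dimensional log-canonical local complete intersections, hence smooth points or nodes. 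Thus semi-log-canonicity in the sense of \cite[Section~4]{KSB88} applies to $Z$ and coincides with log-canonicity here, the non-normal fiber $\{xy=0\}$ of $\varphi(x,y)=xy$ illustrating why the demi-normal rather than the normal notion is needed; cf.\ \cite{GHb}. Applying this with $V=X$, $Z=X_y$ and $c=\dim Y$ gives the stated equivalence.

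The main obstacle is the passage, in Item (1), from the fibers of $J_m(\varphi)$ over constant jets --- which are precisely the jet schemes $J_m(X_y)$ of the usual fibers --- to its fibers over arbitrary jets: one has to show that the smoothness of $Y$ makes the constant-jet fibers control both the dimension and the ``locally integral'' property of \emph{all} fibers of $J_m(\varphi)$. For the dimension this is packaged in Theorem~\ref{thm:log canonical threshold and jet schemes} together with the remark after Definition~\ref{def:jet flat}, but the ``locally integral'' refinement needs a little more jet-scheme geometry over a smooth base. A secondary point is the bookkeeping in Item (2) around reducedness, demi-normality, and the precise meaning of semi-log-canonicity for non-normal fibers, and checking that all of this is captured exactly by the single equality $\operatorname{lct}(X,X_y)=\dim Y$.
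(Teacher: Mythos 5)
Your approach is genuinely different from the paper's in both items, and you have correctly identified where the gaps are; let me confirm that they are real and point to what the paper does instead.

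For Item (1), the paper simply cites \cite[Corollary 3.12]{GHb}, which packages Musta\c{t}\u{a}'s jet-scheme characterization of rational singularities. You attempt a self-contained derivation but, as you say yourself, the passage from fibers of $J_{m}(\varphi)$ over \emph{constant} jets (which are the $J_{m}(X_{y})$) to fibers over \emph{arbitrary} jets in $J_{m}(Y)$ is missing. This is a genuine gap and not a mere formality: already for $\varphi(x)=x^{2}$ the fiber of $J_{1}(\varphi)$ over the constant jet at $0$ is $\operatorname{Spec}k[x,x']/(x^{2},xx')$, while the fiber over the jet $(0,1)$ is empty, so the constant-jet fibers do not a priori control the integrality or even nonemptiness of the remaining fibers. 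Equidimensionality plus miracle flatness can be made to work on the flatness side, but ``locally integral fibers'' is a condition on \emph{all} fibers, and extracting that from the constant-jet fibers requires exactly the jet-scheme geometry over a smooth base that \cite{GHb} supplies and that you have left unproved.

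For Item (2), the paper's route is: by \cite[Corollary 2.7]{GHb}, jet-flatness is equivalent to flatness of each $J_{m}(\varphi)$ over $Y\subseteq J_{m}(Y)$; since a flat morphism between smooth varieties has local complete intersection fibers, this is equivalent to all $J_{m}(X_{\varphi(x)})$ being local complete intersections; and the latter is identified with semi-log-canonicity of $X_{\varphi(x)}$ via \cite[Corollary 10.2.9]{Ish18} and \cite[Corollary 3.17]{EI15}. Your route goes through $\operatorname{lct}(X,X_{y})=\dim Y$ and inversion of adjunction, which is a reasonable alternative in spirit. But the Ein--Musta\c{t}\u{a}--Yasuda inversion of adjunction you invoke is formulated for \emph{normal} local complete intersections, and the fibers here are precisely the non-normal ones ($\{xy=0\}$ being the paper's own cautionary example). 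Concluding ``$Z$ has log-canonical singularities'' for a non-normal $Z$ is ill-posed without first fixing a notion (semi-log-canonical, or Mather--Jacobian log canonical); and your demi-normality patch is circular, since it assumes a sense of log-canonicity already in place before demi-normality is established. The clean way to close this loop is exactly the jet-scheme characterization of semi-log-canonical singularities for local complete intersections that the paper cites, which is designed for this non-normal setting. Your reducedness step (that a multiplicity-$\geq 2$ component would drop the threshold strictly below $c$) is also asserted without proof; it is true, but it should be justified or cited. In short: the strategy is sound but both acknowledged gaps are genuine, and the references \cite[Corollary 10.2.9]{Ish18}, \cite[Corollary 3.17]{EI15}, and \cite[Corollaries 2.7, 3.12]{GHb} are precisely what is needed to fill them.
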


\begin{proof}
Item (1) is proved in \cite[Corollary 3.12]{GHb} and essentially
follows from a characterization of rational singularities, by Musta\c{t}\u{a}
\cite{Mus01}. For the proof of Item (2), note that by \cite[Corollary 2.7]{GHb},
$\varphi$ is jet-flat if and only if $J_{m}(\varphi)$ is flat over
$Y\subseteq J_{m}(Y)$. Since a fiber of a morphism between smooth
varieties is flat if and only if its fibers are local complete intersections,
the latter condition is equivalent to the condition that every $x\in X$,
and every $m\in\N$, the scheme $J_{m}(X_{\varphi(x),\varphi})$ is
a local complete intersection. By \cite[Corollary 10.2.9]{Ish18}
and \cite[Corollary 3.17]{EI15}, this is equivalent to the condition
that $X_{\varphi(x),\varphi}$ has semi-log-canonical singularities,
for every $x\in X$. 
\end{proof}

\subsection{Proof of Theorem \ref{thm:characterization of Lq for all q-intro}}

We are now in a position to prove Theorem \ref{thm:characterization of Lq for all q-intro}.
Let us recall its formulation, slightly restated using Lemma \ref{lemma: jet flat vs (FRS) and (FSLCS)}. 
\begin{thm}
\label{thm:characterization of Lq for all q}Let $\varphi:X\rightarrow Y$
be a map between smooth $K$-varieties. Then the following are equivalent: 
\begin{enumerate}
\item $\varphi$ is jet-flat. 
\item For every local field $F$ containing $K$, we have $\epsilon_{\star}(\varphi_{F})=\infty$. 
\item For every $F\in\operatorname{Loc}_{0,\gg}$ we have $\epsilon_{\star}(\varphi_{F})=\infty$. 
\item We have $\epsilon_{\star}(\varphi_{\C})=\infty$. 
\end{enumerate}
\end{thm}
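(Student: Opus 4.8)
The plan is to prove the cycle $(1)\Rightarrow(2)\Rightarrow(3)\Rightarrow(1)$ together with $(2)\Rightarrow(4)\Rightarrow(1)$, as announced after the statement of Theorem~\ref{thm:characterization of Lq for all q-intro}. The implications $(2)\Rightarrow(3)$ and $(2)\Rightarrow(4)$ are immediate, since $\C$ and every $F\in\operatorname{Loc}_{0,\gg}$ are local fields containing $K$.

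For $(1)\Rightarrow(2)$ I would fix a local field $F\supseteq K$ and $\mu\in\mathcal{M}_c^\infty(X(F))$, and show that $\varphi_*\mu\in\mathcal{M}_{c,q}(Y(F))$ for every finite $q$. First reduce to a constructible measure: since $\mu$ has bounded density and compact support, $\mu\le C\cdot\mathbf{1}_B\,\mu_F^{\dim X}$ for a suitable ball $B$ and constant $C$, so the density of $\varphi_*\mu$ is bounded by that of the constructible measure $\varphi_*(\mathbf{1}_B\,\mu_F^{\dim X})$, and it suffices to treat the latter. Next, invoke the structure theory of constructible (motivic) functions --- \cite{CM11,CM12} in the Archimedean case and \cite{CGH18} in the non-Archimedean case --- to reduce, after a cell decomposition, to the monomial situation where the density is bounded above by a finite sum of terms $\prod_i|y_i|_F^{\alpha_i}(\log|y_i|_F)^{\beta_i}$ with $\alpha_i\in\Q$ and $\beta_i\in\N$; such a function lies in $L^q$ for all finite $q$ exactly when every exponent $\alpha_i$ is $\ge 0$. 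Finally, identify the $\alpha_i$ with the data of a log-principalization of the fibers of $\varphi$, and use that jet-flatness of $\varphi$ is equivalent, by Theorem~\ref{thm:log canonical threshold and jet schemes} and the discussion following Definition~\ref{def:jet flat}, to $\operatorname{lct}(X,X_{\varphi(x),\varphi})\ge\dim Y$ for every $x$; combined with flatness (which pins the fibers to dimension $\dim X-\dim Y$) this is precisely what forces all $\alpha_i\ge 0$.

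For $(3)\Rightarrow(1)$ and $(4)\Rightarrow(1)$ I would first restrict $\varphi$ over an affine open cover of $Y$ to reduce to $Y=\mathbb{A}^n$, so that the additive group structure and the convolution $\varphi*\psi$ of Definition~\ref{def:convolution} are available. Given any dominant $\psi\colon Y'\to Y$ and $\sigma\in\mathcal{M}_c^\infty(Y'(F))$, Theorem~\ref{thm:lowerbd} gives $\epsilon_\star(\psi_*\sigma)\ge\delta$ for some $\delta>0$ depending only on $\operatorname{Supp}\sigma$; since $\epsilon_\star(\varphi_F)=\infty$, every $\varphi_*\mu$ with $\mu\in\mathcal{M}_c^\infty(X(F))$ lies in $\mathcal{M}_{c,1+1/\delta}(Y(F))$, and Young's convolution inequality on $\mathbb{A}^n$ (cf.\ (\ref{eq:Young's inequality})) then gives $(\varphi*\psi)_*(\mu\times\sigma)=\varphi_*\mu*\psi_*\sigma\in\mathcal{M}_{c,\infty}(Y(F))$. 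Dominating an arbitrary $\rho\in\mathcal{M}_c^\infty((X\times Y')(F))$ by a constant times a product measure as above, we conclude that $(\varphi*\psi)_F$ is an $L^\infty$-map --- for all $F\in\operatorname{Loc}_{0,\gg}$ in case $(3)$, and for $F=\C$ in case $(4)$ --- whence, by Corollary~\ref{cor:characterization of Linfty}, $\varphi*\psi$ is (FRS). Since this holds for every dominant $\psi$, an analysis of the jets of $\varphi$ --- using that $J_m$ intertwines convolution on $\mathbb{A}^n$, Lemma~\ref{lemma: jet flat vs (FRS) and (FSLCS)}(1), and \cite[Corollary 2.7]{GHb} --- shows that each $J_m(\varphi)$ is flat over $Y\subseteq J_m(Y)$, which is the statement that $\varphi$ is jet-flat.

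The hardest step will be $(1)\Rightarrow(2)$: upgrading the bare $L^{1+\epsilon}$-bound of Theorem~\ref{thm:lowerbd} to membership in $L^q$ for all finite $q$ needs the fine structure theory of constructible measures --- so that one knows the density of $\varphi_*\mu$ blows up at most logarithmically along the critical locus, not with a genuine negative power --- and it needs matching the resulting exponents with the jet-scheme reformulation of flatness with semi-log-canonical fibers; identifying the condition that all exponents are $\ge 0$ with the condition that the log-canonical threshold of every fiber is $\ge\dim Y$ is the delicate point. A secondary subtlety, in $(3),(4)\Rightarrow(1)$, is the last step passing from the property that $\varphi*\psi$ is (FRS) for every dominant $\psi$ to jet-flatness of $\varphi$, which relies on $J_m$ commuting with convolution on the additive group $\mathbb{A}^n$ and on \cite[Corollary 2.7]{GHb}.
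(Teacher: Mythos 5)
Your outline of the implications $(2)\Rightarrow(3),(4)$ and $(3),(4)\Rightarrow(1)$ matches the paper. In particular, for $(3),(4)\Rightarrow(1)$ you correctly reduce to $Y=\mathbb{A}^m$, use Theorem~\ref{thm:lowerbd} plus Young's inequality to deduce that $\varphi\ast\psi$ is an $L^{\infty}$-map (hence (FRS) by Corollary~\ref{cor:characterization of Linfty}) for every dominant $\psi$, and then pass to jet-flatness via $J_m$ commuting with convolution, Lemma~\ref{lemma: jet flat vs (FRS) and (FSLCS)}, and \cite[Corollary 2.7]{GHb}. The one thing worth making explicit is that the paper does \emph{not} use all dominant $\psi$: it argues by contradiction, assumes $\varphi$ is not jet-flat, and then picks a specific $\psi_N(y)=(y_1^{2N},\dots,y_m^{2N})$ whose jets are ``bad enough'' that $J_{k_0}(\varphi\ast\psi_N)$ fails to be flat, contradicting (FRS). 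Your phrase ``an analysis of the jets of $\varphi$'' hides exactly this construction.

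The real gap is in $(1)\Rightarrow(2)$. After reducing to a constructible density $g(y)$ and invoking cell decomposition to write $g$ locally as a finite sum of terms $\prod_i|y_i|^{\alpha_i}(\log|y_i|)^{\beta_i}$, you assert that the exponents $\alpha_i$ can be ``identified with the data of a log-principalization of the fibers of $\varphi$,'' so that jet-flatness (i.e.\ $\operatorname{lct}(X,X_{\varphi(x),\varphi})\geq\dim Y$ for all $x$) forces $\alpha_i\geq 0$. This step is not substantiated, and I do not see how to make it work directly. The $\operatorname{lct}$ of a fiber $X_{y,\varphi}$ controls the small-$r$ behavior of $\mu(\varphi^{-1}(B(y,r)))$ for a \emph{fixed} $y$, whereas the exponents in the cell decomposition describe how $g$ behaves as $y$ \emph{varies} across the critical values. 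These are related but not trivially so, and the paper emphasizes (in $\S$\ref{subsec:Higher-dimensions}) that in dimension $\dim Y>1$ the tight link between density exponents and log-canonical-threshold-type invariants breaks down. The paper sidesteps this by a quite different mechanism: it proves that $G(y,k)$ (the local average of the density) grows at most polynomially in $k$ (Proposition~\ref{Proposition:logarithmic explosion}) resp.\ $\int G(y,r)^p\,d\tau$ grows at most polylogarithmically in $|\log r|$ (Proposition~\ref{Prop:logarithmic explosion-Archimedean}), by a contradiction argument: jet-flatness of $\varphi$ plus \cite[Corollary 3.18]{GHb} implies $\varphi\ast\psi_R$ is (FRS) for the power map $\psi_R(y)=(y_1^R,\dots,y_m^R)$, whence $(\varphi\ast\psi_R)_\ast$ is $L^\infty$ by Corollary~\ref{cor:characterization of Linfty}; any genuine power-law growth of $G$ would then contradict this bound for $R$ large. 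It then takes a second, nontrivial step to pass from control on the averages $G(y,\cdot)$ to control on $g$ itself, using a definable constancy radius (\cite[Corollary 1.4.3]{CGH18}) in the non-Archimedean case and an approximate-constancy lemma for constructible functions (Lemma~\ref{Lemma:equals avarage}) in the Archimedean case. Neither the (FRS) trick via $\psi_R$ nor the averages-to-density step appears in your sketch; without them, ``reduce to monomials and show $\alpha_i\geq 0$'' is only a restatement of the goal.
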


The proof of Theorem \ref{thm:characterization of Lq for all q} is
done by showing both implications $(1)\Rightarrow(2)\Rightarrow(3)\Rightarrow(1)$
and $(1)\Rightarrow(2)\Rightarrow(4)\Rightarrow(1)$. The implications
$(2)\Rightarrow(3)$ and $(2)\Rightarrow(4)$ are immediate. We first
prove $(3)\Rightarrow(1)$ and $(4)\Rightarrow(1)$. Then we will
prove $(1)\Rightarrow(2)$ in the non-Archimedean case in $\mathsection$\ref{subsec:--non-Archimedean-case},
and the Archimedean case in $\mathsection$\ref{subsec:-Archimedean-case}. 
\begin{prop}
\label{prop:(3),(4)-->(1)}Let $\varphi:X\rightarrow Y$ be a map
between smooth $K$-varieties. Assume that either $\epsilon_{\star}(\varphi_{\C})=\infty$
or $\epsilon_{\star}(\varphi_{F})=\infty$ for all $F\in\operatorname{Loc}_{0,\gg}$.
Then $\varphi$ is jet-flat. 
\end{prop}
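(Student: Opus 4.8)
The plan is to establish $(3)\Rightarrow(1)$ and $(4)\Rightarrow(1)$ simultaneously, in three steps. \emph{Step 1 (reduction to $Y=\mathbb{A}^n$).} By Theorem~\ref{thm:log canonical threshold and jet schemes} and Definition~\ref{def:jet flat}, $\varphi$ is jet-flat iff $\operatorname{lct}(X,X_{y,\varphi})\ge\dim Y$ for every $y\in Y$; this is a condition on the fibers, hence étale-local on $Y$. On the other hand $\epsilon_{\star}(\varphi_F)=\infty$ passes to the restriction of $\varphi$ over any open $V\subseteq Y$ (the infimum in (\ref{eq:def-eps-star}) can only increase), and is preserved under composing with an étale map $V\to\mathbb{A}^n$, which is a local analytic diffeomorphism and therefore preserves membership of pushforwards in $\bigcap_{q>1}\mathcal{M}_{c,q}$. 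Covering $Y$ by such $V$'s (and $X$ by its connected components), we may assume $Y=\mathbb{A}^n$ and $\varphi$ dominant.

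\emph{Step 2 ($\varphi*\psi$ is (FRS) for every dominant $\psi:Y'\to\mathbb{A}^n$).} By Corollary~\ref{cor:characterization of Linfty} it is enough that $(\varphi*\psi)_F$ be an $L^{\infty}$-map for all $F\in\operatorname{Loc}_{0,\gg}$ (resp.\ for $F=\mathbb{C}$). Since every $\rho\in\mathcal{M}_c^{\infty}((X\times Y')(F))$ is bounded by a multiple of a product measure $\mu\times\nu$, and $(\varphi*\psi)_{*}(\mu\times\nu)=(\varphi_F)_{*}\mu*(\psi_F)_{*}\nu$ is a convolution on the additive group $F^n$, it suffices to treat products. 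Here $(\varphi_F)_{*}\mu\in\bigcap_{q>1}\mathcal{M}_{c,q}(F^n)$ by hypothesis, while Theorem~\ref{thm:lowerbd} gives $\epsilon_{\star}(\psi_F;\cdot)\ge\operatorname{lct}_F(\mathcal{J}_\psi;\cdot)$, which is bounded below by some $\delta>0$ on the support of $\nu$, so $(\psi_F)_{*}\nu\in\mathcal{M}_{c,1+\delta}(F^n)$. Young's convolution inequality with the conjugate exponents $1+\tfrac1\delta$ and $1+\delta$ then yields $(\varphi_F)_{*}\mu*(\psi_F)_{*}\nu\in\mathcal{M}_{c,\infty}(F^n)$.

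\emph{Step 3 (from Step 2 to jet-flatness).} Suppose $\varphi$ is not jet-flat. Then there is $y_0\in\mathbb{A}^n$ with $\lambda:=\operatorname{lct}(X,X_{y_0,\varphi})<n$; choose a point $x^{*}$ of the fiber where the threshold is attained, and write $g_i=\varphi_i-y_{0,i}$, so that near $x^{*}$ the fiber is $V(\langle g_1,\dots,g_n\rangle)$. For a large integer $N$, take the finite dominant map $\psi(z)=(z_1^{N},\dots,z_n^{N}):\mathbb{A}^n\to\mathbb{A}^n$. By Step 2, $\varphi*\psi$ is (FRS), hence flat, so its scheme-theoretic fiber over $y_0$ is a complete intersection of codimension $n$ in $X\times\mathbb{A}^n$, cut out near $(x^{*},0)$ by $\mathfrak{c}=\langle g_1+z_1^{N},\dots,g_n+z_n^{N}\rangle$, and it has rational, hence (being l.c.i.) canonical, hence log-canonical singularities; a complete intersection subscheme is log-canonical at a point precisely when its log-canonical threshold there equals its codimension, which follows by combining Theorem~\ref{thm:log canonical threshold and jet schemes} with the jet-scheme criterion behind Lemma~\ref{lemma: jet flat vs (FRS) and (FSLCS)}. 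But monotonicity of $\operatorname{lct}$ under enlarging an ideal, together with the additivity of $\operatorname{lct}$ for a sum of ideals pulled back from the two factors of a product, gives
\[
\operatorname{lct}\bigl(\mathfrak{c};(x^{*},0)\bigr)\;\le\;\operatorname{lct}\bigl(\langle g_1,\dots,g_n,z_1^{N},\dots,z_n^{N}\rangle;(x^{*},0)\bigr)\;=\;\lambda+\tfrac{n}{N},
\]
which is $<n$ as soon as $N>n/(n-\lambda)$. This contradicts the fact that $(\varphi*\psi)^{-1}(y_0)$ is log-canonical at $(x^{*},0)$, so $\varphi$ must be jet-flat.

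The main obstacle is Step 3: translating ``(FRS) after convolution'' into a numerical constraint on the fibers of $\varphi$ itself. Two points need care: (i) identifying the fiber of the flat morphism $\varphi*\psi$ over $y_0$, near $(x^{*},0)$, with the complete-intersection scheme $V(\mathfrak{c})$, and invoking the l.c.i.\ characterization of log-canonical (equivalently, semi-log-canonical) singularities via ``codimension equals threshold''; and (ii) the Thom--Sebastiani-type bound $\operatorname{lct}(\mathfrak{c})\le\lambda+n/N$, which I would obtain purely from ideal-monotonicity and the product formula for $\operatorname{lct}$, with no explicit resolution. The localizations in Step 1 and the reduction to product measures plus the Young estimate in Step 2 are routine.
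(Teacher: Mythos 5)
Your Steps 1 and 2 coincide with the paper's reduction: localize so $Y=\mathbb{A}^m$, then observe that for any dominant $\psi$ the pushforward $(\varphi*\psi)_F$-densities lie in $L^{q}*L^{1+\delta}\subseteq L^\infty$ by Young's inequality and Theorem~\ref{thm:lowerbd}, so $\varphi*\psi$ is (FRS) by Corollary~\ref{cor:characterization of Linfty}.

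Your Step 3 is a genuinely different and correct route. The paper argues through the $\epsilon$-jet-flatness formalism: if $\varphi$ fails to be jet-flat then some $J_{k_0}(\varphi)$ is not $(1-\tfrac1N)$-flat, $J_{k_0}(\psi_N)$ is not $\tfrac1N$-flat, and \cite[Lemma~3.26]{GHb} makes their convolution non-flat, contradicting (FRS) via Lemma~\ref{lemma: jet flat vs (FRS) and (FSLCS)}. You instead pin down a point $x^*$ where $\operatorname{lct}(X,X_{y_0,\varphi})=\lambda<n$, identify the fiber of $\varphi*\psi_N$ over $y_0$ near $(x^*,0)$ with $V(\mathfrak{c})$ for $\mathfrak{c}=\langle g_1+z_1^N,\dots,g_n+z_n^N\rangle$, and use ideal-monotonicity $\mathfrak{c}\subseteq\langle g_1,\dots,g_n\rangle+\langle z_1^N,\dots,z_n^N\rangle$ together with the product additivity $\operatorname{lct}(\mathfrak{a}+\mathfrak{b})=\operatorname{lct}(\mathfrak{a})+\operatorname{lct}(\mathfrak{b})$ to get $\operatorname{lct}(\mathfrak{c};(x^*,0))\le\lambda+n/N<n$, contradicting the l.c.i.\ criterion ``log-canonical $\Leftrightarrow$ lct equals codimension'' that (FRS) forces. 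Both arguments ultimately rest on Musta\c{t}\u{a}'s jet-scheme characterization, but yours trades the combinatorial flatness lemma of \cite{GHb} for two standard lct facts, making the contradiction step more self-contained. The two points you flagged as needing care are indeed the load-bearing ones, and both hold: the fiber of the flat morphism $\varphi*\psi_N$ is an l.c.i.\ of codimension $n$ cut out precisely by $\mathfrak{c}$, and the ``lct equals codimension'' criterion for (semi-)log-canonical l.c.i.\ subschemes follows by combining Theorem~\ref{thm:log canonical threshold and jet schemes} with the pure-dimensionality of jet schemes, reduced to the pointwise statement because $\operatorname{lct}_z \le\operatorname{codim}$ always holds.
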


\begin{proof}
Working locally, and composing $\varphi$ with an \'etale map $\Phi:Y\rightarrow\mathbb{A}^{m}$,
we may assume that $Y=\mathbb{A}^{m}$. Let $\psi:\mathbb{A}^{m}\rightarrow\mathbb{A}^{m}$
be any dominant morphism, and let $\mu_{1}\in\mathcal{M}_{c}^{\infty}(X(F))$
and $\mu_{2}\in\mathcal{M}_{c}^{\infty}(F^{m})$. By Theorem \ref{thm:lowerbd},
for all $F\in\operatorname{Loc}_{\gg}\cup\left\{ \C\right\} $, we
have $\psi_{*}(\mu_{2})\in L^{1+\epsilon}$ for some $\epsilon>0$.
Taking $q$ large enough, by Young's convolution inequality, one has:
\[
(\varphi*\psi)_{*}(\mu_{1}\times\mu_{2})=\varphi_{*}(\mu_{1})*\psi_{*}(\mu_{2})\subseteq L^{q}*L^{1+\epsilon}\subseteq L^{\infty},
\]
where $\varphi*\psi$ is as in Definition \ref{def:convolution}.
By Corollary \ref{cor:characterization of Linfty}, we get that $\varphi*\psi$
is (FRS). We now claim that since $\varphi$ is a morphism whose convolution
with any dominant morphism produces an (FRS) morphism, $\varphi$
must be jet-flat.

Indeed, assume it is not the case. Then by Theorem \ref{thm:log canonical threshold and jet schemes}
and Definition \ref{def:jet flat}, there exist $y\in\overline{K}^{m}$
and $N>0$ such that the scheme theoretic fiber $X_{y,\varphi}$ of
$\varphi$ over $y$ satisfies 
\[
\operatorname{lct}(X,X_{y,\varphi})=\dim X-\sup_{k\geq0}\frac{\dim J_{k}X_{y,\varphi}}{k+1}\leq m(1-\frac{2}{N}).
\]
Moreover, this supremum is achieved for $k$ divisible enough. Thus
the map 
\[
J_{k}(\varphi):J_{k}(X)\rightarrow J_{k}(Y)
\]
is not $\left(1-\frac{1}{N}\right)$-flat for $k$ divisible enough.
But on the other hand, the map 
\[
\psi_{N}(y_{1},...,y_{m})=(y_{1}^{2N},...,y_{m}^{2N})
\]
satisfies that $J_{l}(\psi_{N})$ is not $\frac{1}{N}$-flat for divisible
enough $l$ (since $\operatorname{lct}(y_{i}^{2N})=\frac{1}{2N}$).
Thus we may find $k_{0}\in\N$ such that $J_{k_{0}}(\varphi)$ is
not $\left(1-\frac{1}{N}\right)$-flat and $J_{k_{0}}(\psi_{N})$
is not $\frac{1}{N}$-flat . But then $J_{k_{0}}(\varphi*\psi)=J_{k_{0}}(\varphi)*J_{k_{0}}(\psi)$
is not flat (see \cite[Lemma 3.26]{GHb}), which is a contradiction
by Fact \ref{lemma: jet flat vs (FRS) and (FSLCS)}, as $\varphi*\psi$
is (FRS). 
\end{proof}

\subsection{\label{subsec:--non-Archimedean-case}$(1)\Rightarrow(2)$: the non-Archimedean
case }

We now turn to the proof of $(1)\Rightarrow(2)$, in the non-Archimedean
case. We first prove the following variant of \cite[Theorem 4.12]{CGH23}. 
\begin{prop}[{\cite[Theorem 4.12]{CGH23}}]
\label{Proposition:logarithmic explosion}Let $\varphi:X\rightarrow Y$
be a jet-flat map between smooth $K$-varieties. Then there exists
$M\in\N$, such that for each $F\in\operatorname{Loc}_{0}$, each
$\mu\in\mathcal{M}_{c}^{\infty}(X(F))$ and each non-vanishing $\tau\in\mathcal{M}^{\infty}(Y(F))$,
one can find $C_{F,\mu,\tau}>0$ such that for each $y\in Y(F)$ and
$k\in\N$ one has, 
\[
G_{F,\mu}(y,k):=\frac{(\varphi_{*}\mu)(B(y,q_{F}^{-k}))}{\tau(B(y,q_{F}^{-k}))}\leq C_{F,\mu,\tau}k^{M}.
\]
\end{prop}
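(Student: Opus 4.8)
The plan is to reduce to a monomial/combinatorial count via motivic (or direct $p$-adic) resolution of singularities, exactly as in the proof of \cite[Theorem 4.12]{CGH}, but keeping track of the fact that jet-flatness gives log-canonical threshold exactly $1$ for the relevant ideal. First I would observe that since the statement is local on $X$ and on $Y$, and invariant under étale base change, we may assume $X\subseteq\mathbb{A}^n$ is affine, $Y=\mathbb{A}^m$, and $\mu=\mathbf{1}_{\Omega}\,\mu_F^n$ for a compact open $\Omega\subseteq X(F)$, and $\tau=\mu_F^m$. Then $G_{F,\mu}(y,k)=q_F^{mk}\cdot\mu_F^n\big(\Omega\cap\varphi_F^{-1}(B(y,q_F^{-k}))\big)$, so the claim becomes: the volume of the preimage of a ball of radius $q_F^{-k}$ is $\ll k^M q_F^{-mk}$, uniformly in $y$ and $k$ (and, for the uniformity across $F\in\operatorname{Loc}_0$, one uses the motivic/definable framework of \cite{CGH14,CGH18} so that the bound is governed by a single motivic object).

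The key step is to apply Theorem \ref{thm:analytic resolution of sing} (or its algebraic/motivic counterpart) to principalize the ideal defining the graph incidence — concretely, resolve $\varphi$ so that in suitable local coordinates $\widetilde x=(\widetilde x_1,\dots,\widetilde x_n)$ on a chart of $\widetilde X$, the composite $\varphi\circ\pi$ is monomial in each target coordinate and $\operatorname{Jac}(\pi)=v(\widetilde x)\widetilde x_1^{b_1}\cdots\widetilde x_n^{b_n}$ with $v$ a unit. After this change of variables the volume $\mu_F^n\big(\varphi_F^{-1}(B(y,q_F^{-k}))\big)$ becomes a sum, over resolution charts, of integrals of the form $\int \mathbf{1}[\text{monomial conditions of total weight}\ge k]\cdot\prod_i|\widetilde x_i|_F^{b_i}\,d\widetilde x$, which evaluate to explicit sums over lattice points $(n_1,\dots,n_n)\in\mathbb{N}^n$ of $q_F^{-\sum n_i(b_i+1)}$ subject to a system of linear inequalities encoding that $\varphi\circ\pi$ lands in the ball. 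The number of such lattice points with $\sum_i a_i^{(j)} n_i\ge k$ for each target coordinate $j$, weighted by $q_F^{-\sum n_i(b_i+1)}$, is $\ll k^M q_F^{-mk}$ precisely when the relevant log-canonical threshold is $\ge 1$ in every coordinate direction; jet-flatness (via Lemma \ref{lemma: jet flat vs (FRS) and (FSLCS)} and Theorem \ref{thm:log canonical threshold and jet schemes}, applied fiberwise) supplies exactly this: $\operatorname{lct}(X,X_{\varphi(x),\varphi})\ge m$ for all $x$, which forces $(b_i+1)/a_i^{(j)}\ge 1$ whenever $E_i$ sits over the fiber, leaving only the "boundary" strata where $(b_i+1)=a_i^{(j)}$, and those contribute the polynomial-in-$k$ factor $k^M$ with $M$ at most the number of such strata (bounded by the number of exceptional divisors, hence independent of $F$, $\mu$, $y$).

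The main obstacle I expect is the \emph{uniformity}: getting a single $M$ and a constant $C_{F,\mu,\tau}$ with the stated dependence, uniformly over all $y\in Y(F)$ and over all non-Archimedean $F\in\operatorname{Loc}_0$ simultaneously. The $y$-uniformity is handled by noting the resolution data (the exponents $a_i,b_i$ and the combinatorial type of the stratification) is finite and independent of which fiber one looks at, so the worst fiber dictates $M$; the $F$-uniformity requires phrasing everything in the language of motivic/constructible functions so that $G_{F,\mu}(y,k)$ is (uniformly in $F$) a motivic function of $(y,k)$, and then invoking the known bounds on the growth of motivic functions in an "integer-like" parameter (the valuative analogue of \cite[Theorem 4.12]{CGH}, cf.\ \cite{CGH14,CGH18}), which already provide a polynomial bound $k^M$ once one knows finiteness/$L^1$ of the relevant integral — and that finiteness is exactly $\operatorname{lct}\ge m$, i.e.\ jet-flatness. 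A secondary technical point is that the cited \cite[Theorem 4.12]{CGH} is stated for a slightly different setup, so I would need to spell out the (routine) translation: replace "boundary of a definable set" incidence there with the ball-preimage incidence here, observing they have the same monomialized form.
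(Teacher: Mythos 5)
Your proposal takes a genuinely different route from the paper, and although the high-level picture (jet-flatness controls the relevant log-canonical threshold, which in turn controls volumes of preimages of balls) is the right intuition, the argument as sketched has two real gaps.

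First, the translation of jet-flatness into resolution data is wrong as stated. You write that $\operatorname{lct}(X,X_{\varphi(x),\varphi})\geq m$ ``forces $(b_i+1)/a_i^{(j)}\geq 1$ whenever $E_i$ sits over the fiber''. But the fiber ideal $J_y=\langle\varphi_1-y_1,\dots,\varphi_m-y_m\rangle$ is principalized as a \emph{joint} ideal: $\pi^*J_y=\langle\prod_i\widetilde{x}_i^{a_i}\rangle$ with $a_i=\min_j a_i^{(j)}$, and $\operatorname{lct}\geq m$ translates to $(b_i+1)/a_i\geq m$, not to a per-coordinate inequality with right-hand side $1$. This is not a cosmetic difference: with the per-coordinate condition $(b_i+1)/a_i^{(j)}\geq 1$ you only get $\sum_i n_i(b_i+1)\geq k$ on the region $\{\prod_i|\widetilde{x}_i|^{a_i}\leq q_F^{-k}\}$, which yields a density bound of order $q_F^{(m-1)k}k^M$ — exponential, not polynomial. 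You need the factor $m$ in $(b_i+1)\geq m\,a_i$ to force $\sum n_i(b_i+1)\geq mk$ and cancel the $q_F^{mk}$ in the denominator.

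Second, and more seriously, the uniformity over $y$ and over $F$ is not actually addressed by your argument. A single Hironaka resolution principalizes one ideal $J_y$, for one fixed $y$; a resolution that works simultaneously for all $y$ is a family resolution, and such resolutions can jump (the combinatorial data of the exceptional divisors over the worst fiber need not control the data over nearby fibers, and vice versa). Saying the ``resolution data is finite and independent of which fiber one looks at'' is exactly what would need to be proved, and it is not routine. The paper sidesteps both issues by a quite different mechanism: it first invokes the motivic supremum-approximation theorem \cite[Theorem 2.1.3]{CGH18} to replace $\sup_y G_F(y,l,k)$ by an honest motivic function $H_F(l,k)$ of $(l,k)$ alone, then uses the structure theory \cite[Proposition 1.4.2]{CGH18} to write $H_F(l,k)$ as a finite sum $\sum_i c_i\,k^{a_i}q_F^{b_i k}$ on each definable piece, and finally proves $b_1\leq 0$ by \emph{contradiction}: if $b_1>0$, convolving $\varphi$ with $\psi_R(x)=(x_1^R,\dots,x_m^R)$ for large $R$ gives an (FRS) morphism by \cite[Corollary 3.18]{GHb}, whose pushforward density must be bounded (Corollary \ref{cor:characterization of Linfty}), while the assumed exponential growth of $G_F$ combined with the explicit lower bound on the preimage of a ball under $\psi_R$ forces it to be unbounded. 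That contradiction argument — not any direct lattice-point estimate — is the engine of the proof, and it is what makes the $y$- and $F$-uniformity come out for free. Your approach would need a substantial additional input (a genuinely uniform family resolution with controlled discrepancies, or an equisingularity theorem) before it could be made to work.
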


\begin{proof}
We may assume that $Y=\mathbb{A}_{K}^{m}$ and that $\tau=\mu_{F}^{m}$.
We may further assume that $X$ is affine, and thus embeds in $\mathbb{A}_{K}^{n}$.
Let $\widetilde{\mu}$ be the canonical measure on $X(F)$ (see \cite[Section 3.3]{Ser81},
and also \cite[Section 1.2]{CCL12}). It is enough to consider measures
$\mu\in\mathcal{M}_{c}^{\infty}(X(F))$ which are of the form $\mu_{l}:=\widetilde{\mu}|_{B(0,q_{F}^{l})\cap X(F)}$.

Write $g_{F}(l,y)$ for the density of $\varphi_{*}\mu_{l}$ with
respect to $\mu_{F}^{m}$, and set $G_{F}(y,l,k):=G_{F,\mu_{l}}(y,k)$.
Then the collections $\{g_{F}(l,y)\}_{F\in\operatorname{Loc}_{0}}$
and $\{G_{F}(y,l,k)\}_{F\in\operatorname{Loc}_{0}}$ are both motivic
functions, in the sense of \cite[Section 1.2]{CGH18}. By \cite[Theorem 2.1.3]{CGH18},
there exists a motivic function $H(l,k)=\left\{ H_{F}:\Z\times\N\rightarrow\R\right\} _{F\in\operatorname{Loc}_{0}}$,
which approximates the supremum of $G(y,l,k)$, that is: 
\begin{equation}
\frac{1}{C_{F}}H_{F}(l,k)\leq\sup_{y\in Y(F)}G_{F}(y,l,k)\leq H_{F}(l,k),\label{eq:approximate of suprema}
\end{equation}
for all $F\in\operatorname{Loc}_{0}$ and $(l,k)\in\Z\times\N$, where
$C_{F}$ depends only on the local field\footnote{In the statement of \cite[Theorem 2.1.3]{CGH18}, the approximation
(\ref{eq:approximate of suprema}) is stated for $\left|G_{F}(y,l,k)\right|_{\C}$
instead of $G_{F}(y,l,k)$. Since $G_{F}(y,l,k)$ is a non-negative
real-valued motivic function, their argument yields the current statement
as well (see the first four lines of the proof on p.146).}.

Since $H$ is motivic, and using \cite[Proposition 1.4.2]{CGH18},
for each $F\in\operatorname{Loc}_{0}$, we may divide $\Z\times\N$
into a finite disjoint union $\Z\times\N=\bigsqcup_{A_{F}\in\mathcal{A}}A_{F}$,
with $\left|\mathcal{A}\right|<\infty$ independent of $F$, such
that on each part $A_{F}\subseteq\Z\times\N$, the following hold.
There exist finitely many $a_{i}\in\N$, $b_{i}\in\Q$ independent
of $F$, a finite set $\Lambda_{F}$ (of size depending on $F$),
and a finite partition of $A_{F}$ into subsets $A_{F}=\bigsqcup_{\xi\in\Lambda_{F}}A_{F,\xi}$,
such that for all $(k,l)\in A_{F,\xi}$: 
\[
H_{F}(l,k)=\sum_{i=1}^{L}c_{i}(\xi,l,F)\cdot k^{a_{i}}q_{F}^{b_{i}k},
\]
for some constants $c_{i}(\xi,l,F)$ depending on $l,F$ and $\xi$.
Moreover, for fixed $F\in\operatorname{Loc}_{0}$ and $l\in\N$, the
set $A_{F,\xi,l}:=\left\{ k\in\N:(k,l)\in A_{F,\xi}\right\} $ is
either finite or a fixed congruence class modulo some $e\in\Z_{\geq1}$.

To prove the proposition, it is enough to show that for each $F\in\operatorname{Loc}_{0}$
and $l\in\Z$, we have $H_{F}(l,k)\leq C_{F,l}k^{M}$ on each $A_{F,\xi,l}$,
for some constant $C_{F,l}$ depending on $F,l$. It is enough to
prove this for $A_{F,\xi,l}$ infinite, as otherwise we have 
\[
H_{F}(l,k)\leq C_{F,l}:=\sum_{k\in A_{F,\xi,l}}\sum_{i=1}^{L}\left|c_{i}(\xi,l,F)\right|\cdot k^{a_{i}}q_{F}^{b_{i}k}.
\]
Now suppose $A_{F,\xi,l}$ is infinite. By rearranging the constants
$c_{i}(\xi,l,F)$, we may assume that the pairs $(a_{i},b_{i})$ are
disjoint, and that $(b_{i},a_{i})>(b_{i+1},a_{i+1})$ in lexicographic
order, that is, either $b_{i}>b_{i+1}$ or $b_{i}=b_{i+1}$ and $a_{i}>a_{i+1}$.
Note that if $b_{1}\leq0$, then we are done, since for each $F\in\operatorname{Loc}_{0}$
and each $k\in A_{F,\xi,l}$: 
\begin{equation}
\sup_{y\in Y(F)}G_{F}(y,l,k)\leq H_{F}(l,k)\leq\left(\sum_{i=1}^{L}\left|c_{i}(\xi,l,F)\right|\right)k^{M},\label{eq:bound on G(y,l,k)}
\end{equation}
where $M=\max\{a_{i}\}$. Assume towards contradiction that $b_{1}>0$,
and $c_{1}(\xi,l,F)\neq0$. Then for all large enough $k\in A_{F,\xi,l}$,
one has 
\begin{equation}
\sup_{y\in Y(F)}G_{F}(y,l,k)\geq C_{F}^{-1}H_{F}(l,k)\geq q_{F}^{\frac{1}{2}b_{1}k}.\label{eq:lower bound on G(y,k)}
\end{equation}
Now let $\psi_{R}:\mathbb{A}_{K}^{m}\rightarrow\mathbb{A}_{K}^{m}$
be the map $\psi_{R}(x_{1},...,x_{m})=(x_{1}^{R},...,x_{m}^{R})$,
for $R:=\left\lceil 4m/b_{1}\right\rceil $. Then by \cite[Corollary 3.18]{GHb},
$\varphi*\psi_{R}:X\times\mathbb{A}_{K}^{m}\rightarrow\mathbb{A}_{K}^{m}$
is (FRS). By Corollary \ref{cor:characterization of Linfty}, we have
\begin{equation}
(\varphi*\psi_{R})_{*}(\mu_{l}\times\mu_{\mathcal{O}_{F}}^{m})\in\mathcal{M}_{c,\infty}(F^{m}).\label{eq:(FRS)}
\end{equation}
On the other hand, note that 
\[
(\varphi*\psi_{R})^{-1}(B(y,q_{F}^{-k}))\supseteq\varphi^{-1}(B(y,q_{F}^{-k}))\times\psi_{R}^{-1}(B(0,q_{F}^{-k})).
\]
Further note that 
\[
\psi_{R}^{-1}(B(0,q_{F}^{-k}))=B\left(0,q_{F}^{-\left\lceil \frac{k}{R}\right\rceil }\right).
\]
Thus, we have 
\begin{align*}
\frac{(\varphi*\psi_{R})_{*}(\mu_{l}\times\mu_{\mathcal{O}_{F}}^{m})(B(y,q_{F}^{-k}))}{\mu_{F}^{m}(B(y,q_{F}^{-k}))} & =\frac{\mu_{l}\times\mu_{\mathcal{O}_{F}}^{m}((\varphi*\psi_{R})^{-1}(B(y,q_{F}^{-k}))}{\mu_{F}^{m}(B(y,q_{F}^{-k}))}\\
 & \geq\frac{\mu_{l}\left(\varphi^{-1}(B(y,q_{F}^{-k}))\right)}{\mu_{F}^{m}(B(y,q_{F}^{-k}))}\cdot\mu_{\mathcal{O}_{F}}^{m}\left(\psi_{R}^{-1}(B(0,q_{F}^{-k}))\right)\\
 & =\frac{\varphi_{*}\mu_{l}(B(y,q_{F}^{-k}))}{\mu_{F}^{m}(B(y,q_{F}^{-k}))}\cdot\mu_{\mathcal{O}_{F}}^{m}\left(B\left(0,q_{F}^{-\left\lceil \frac{k}{R}\right\rceil }\right)\right)\\
 & \geq G_{F}(y,l,k)\cdot q_{F}^{-\left\lceil \frac{k}{R}\right\rceil m}.
\end{align*}
By (\ref{eq:lower bound on G(y,k)}), for each $k$ large enough,
we may find $y_{0}\in Y(F)$, such that 
\[
\frac{(\varphi*\psi_{R})_{*}(\mu_{l}\times\mu_{\mathcal{O}_{F}}^{m})(B(y_{0},q_{F}^{-k}))}{\mu_{F}^{m}(B(y_{0},q_{F}^{-k}))}\geq q_{F}^{-\left\lceil \frac{k}{R}\right\rceil m}q_{F}^{\frac{1}{2}b_{1}k}\geq q_{F}^{\frac{b_{1}}{8}k},
\]
which contradicts (\ref{eq:(FRS)}). Thus $b_{1}\leq0$ and we are
done by (\ref{eq:bound on G(y,l,k)}). 
\end{proof}
We are now ready to prove $(1)\Rightarrow(2)$ of Theorem \ref{thm:characterization of Lq for all q}. 
\begin{proof}[Proof of $(1)\Rightarrow(2)$ of Theorem \ref{thm:characterization of Lq for all q},
non-Archimedean case]
Let $\varphi:X\to Y$ be a jet-flat morphism. We may assume $Y=\mathbb{A}_{K}^{m}$.
Let $\mu\in\mathcal{M}_{c}^{\infty}(X(F))$ and write $g_{F}$ for
the density of $\varphi_{*}\mu$ with respect to $\mu_{F}^{m}$. Let
$Y^{\mathrm{sm},\varphi}$ be the set of $y\in Y$ such that $\varphi$
is smooth over $y$. For every $F\in\operatorname{Loc}_{0}$, the
map $\varphi_{F}$ is smooth over $Y^{\mathrm{sm},\varphi}(F)$, and
therefore $g_{F}(y)$ is locally constant on $Y^{\mathrm{sm},\varphi}(F)$.

By \cite[Corollary 1.4.3]{CGH18}, the constancy radius of $g_{F}(y)$
can be taken to be definable, i.e. there exists a definable function
$\alpha:Y^{\mathrm{sm},\varphi}\to\N$ such that $g_{F}(y)$ is constant
around every ball $B(y,q_{F}^{-\alpha_{F}(y)})$. In particular, for
every $y\in Y^{\mathrm{sm},\varphi}(F)$ we have $g_{F}(y)=G_{F,\mu}(y,\alpha_{F}(y))$.
In addition, by Proposition \ref{Proposition:logarithmic explosion},
we have $G_{F,\mu}(y,k)\leq C_{F,\mu}k^{M}$, for $F\in\operatorname{Loc}_{0}$.
We arrive at the following: 
\begin{align*}
\int_{F^{m}}\left|g_{F}(y)\right|^{s}dy & =\int_{Y^{\mathrm{sm},\varphi}(F)}\left|G_{F,\mu}(y,\alpha_{F}(y))\right|^{s}dy\leq C_{F,\mu}^{s}\int_{F^{m}}\alpha_{F}(y)^{Ms}dy\\
 & =C_{F,\mu}^{s}\sum_{t\in\N}t^{Ms}\cdot\mu_{F}^{m}(\{y\in F^{m}:\alpha_{F}(y)=t\})\\
 & \leq C'(F)+C\sum_{t\in\N}t^{Ms}q_{F}^{-\lambda t}<\infty,
\end{align*}
where the last inequality follows by \cite[Theorem 3.1.1]{CGH18},
since $\lim\limits _{t\to\infty}\mu_{F}^{m}(\{y\in F^{m}:\alpha_{F}(y)=t\})=0$
and thus $\mu_{F}^{m}(\{y\in F^{m}:\alpha_{F}(y)=t\})\leq q_{F}^{-\lambda t}$
for some $\lambda>0$ and every $t$ large enough. 
\end{proof}
In \cite[Theorem 4.12]{CGH23}, Cluckers and the first two authors
showed that if $\varphi:X\to Y$ is a jet-flat morphism, which is
defined over $\Z$, and one chooses $\mu=\mu_{X(\Zp)}$ and $\tau=\mu_{Y(\Zp)}$
to be the canonical measures on $X(\Zp)$ and $Y(\Zp)$ (see \cite[Lemma 4.2]{CGH23}),
then the constant $C_{\Qp,\mu,\tau}$ in Proposition \ref{Proposition:logarithmic explosion}
can be taken to be independent of $p$ (i.e. $C_{\Qp,\mu,\tau}=C$).
\cite[Theorem 4.12]{CGH23}, together with the ideas of the proof
of $(1)\Rightarrow(2)$ of Theorem \ref{thm:characterization of Lq for all q},
allows us to give bounds on the $L^{s}$ norms of $\frac{\varphi_{*}\mu_{X(\Zp)}}{\mu_{Y(\Zp)}}$,
which are independent of $p$: 
\begin{prop}
Let $\varphi:X\to Y$ be a dominant morphism between finite type $\Z$-schemes
$X$ and $Y$, with $X_{\Q},Y_{\Q}$ smooth and geometrically irreducible.
For any prime $p$, let $g_{p}$ be the density of $\varphi_{*}\mu_{X(\Zp)}$
with respect to $\mu_{Y(\Zp)}$. Then the following are equivalent: 
\begin{enumerate}
\item $\varphi_{\Q}:X_{\Q}\to Y_{\Q}$ is jet-flat. 
\item For every $s>1$, there exists $C(s)>0$ such that for every prime
$p$,
\[
\left\Vert g_{p}\right\Vert _{s}:=\int_{Y(\Zp)}\left|g_{p}(y)\right|^{s}d\mu_{Y(\Zp)}<C(s).
\]
\end{enumerate}
\end{prop}

\begin{proof}
The implication $(2)\Rightarrow(1)$ follows from Proposition \ref{prop:(3),(4)-->(1)}.
By $(1)\Rightarrow(2)$ of Theorem \ref{thm:characterization of Lq for all q},
it is enough to prove $(1)\Rightarrow(2)$ for $p\gg_{s}1$. Suppose
that $\varphi_{\Q}$ is jet-flat. We may assume that $Y=\mathbb{A}^{m}$.
Indeed, working locally, and since $Y_{\Q}$ is smooth, we may assume
there exists a morphism $\psi:Y\rightarrow\mathbb{A}^{m}$, such that
$\psi_{\Q}$ is an \'etale map. If $\widetilde{g}_{p}$ is the density
of $\psi_{*}\varphi_{*}\mu_{X(\Zp)}$ with respect to $\mu_{p}^{m}$,
and $N$ is an upper bound on the size of the geometric fibers of
$\psi$, then
\[
\left|\frac{\psi_{*}\mu_{Y(\Zp)}}{\mu_{p}^{m}}\right|<N,
\]
for $p$ large enough. In particular, we have:
\[
\int_{Y(\Zp)}\left|g_{p}(y)\right|^{s}d\mu_{Y(\Zp)}\leq\int_{Y(\Zp)}\left|\widetilde{g}_{p}\circ\psi(y))\right|^{s}d\mu_{Y(\Zp)}=\int_{\Z_{p}^{m}}\left|\widetilde{g}_{p}(\widetilde{y})\right|^{s}d(\psi_{*}\mu_{Y(\Zp)})\leq N\int_{\Z_{p}^{m}}\left|\widetilde{g}_{p}(\widetilde{y})\right|^{s}d\mu_{p}^{m}.
\]
As in the proof of $(1)\Rightarrow(2)$ of Theorem \ref{thm:characterization of Lq for all q}
above, by \cite[Corollary 1.4.3]{CGH18}, there exists a definable
function $\alpha:(\mathbb{A}^{m})^{\mathrm{sm},\varphi}\to\N$ such
that $g_{p}(y)=G_{p}(y,\alpha_{\Qp}(y))$, where $G_{p}(y,k):=G_{\Qp,\mu_{X(\Zp)}}(y,k)$.
Hence, applying \cite[Theorem 4.12]{CGH23} we can find $C,M\in\N$
such that for $p\gg1$:
\begin{align*}
\int_{\Z_{p}^{m}}\left|g_{p}(y)\right|^{s}dy & =\int_{(\mathbb{A}^{m})^{\mathrm{sm},\varphi}(\Qp)\cap\Z_{p}^{m}}\left|G_{p}(y,\alpha_{\Qp}(y))\right|^{s}dy\leq C^{s}\int_{(\mathbb{A}^{m})^{\mathrm{sm},\varphi}(\Qp)\cap\Z_{p}^{m}}\alpha_{\Qp}(y)^{Ms}dy\\
 & =C^{s}\sum_{t\in\N}t^{Ms}\cdot\mu_{p}^{m}(\{y\in\Z_{p}^{m}:\alpha_{\Qp}(y)=t\}).
\end{align*}
By \cite[Theorem 3.1.1]{CGH18}, there exists $L\in\N$ and $\lambda>0$
such that $\mu_{p}^{m}(\{y\in\Z_{p}^{m}:\alpha_{\Qp}(y)=t\})\leq p^{-\lambda t}$
for every $t>L$ and every prime $p$. We therefore get the desired
claim as:
\[
\int_{\Z_{p}^{m}}\left|g_{p}(y)\right|^{s}dy\leq C^{s}\sum_{t=0}^{L}t^{Ms}+C^{s}\sum_{t>L}t^{Ms}p^{-\lambda t}<C^{s}\left((L+1)\cdot L^{Ms}+\sum_{t>L}t^{Ms}2^{-\lambda t}\right)<C(s).\qedhere
\]
\end{proof}

\subsection{\label{subsec:-Archimedean-case}$(1)\Rightarrow(2)$: the Archimedean
case }

In this subsection we prove $(1)\Rightarrow(2)$ in the cases $F=\R$
and $F=\C$. Let $\varphi:X\rightarrow Y$ be a jet-flat morphism
between smooth algebraic varieties, defined over $F$. Using restriction
of scalars, we may assume that $F=\R$. We would like to show that
for each $\mu\in\mathcal{M}_{c}^{\infty}(X(\R))$, we have $\varphi_{*}\mu\in\mathcal{M}^{q}(Y(\R))$
for all $1\leq q<\infty$. We start with the following proposition. 
\begin{prop}
\label{Prop:logarithmic explosion-Archimedean}Let $\varphi:X\rightarrow Y$
be a jet-flat map between smooth $\R$-varieties. Then for every $\mu\in\mathcal{M}_{c}^{\infty}(X(\R))$,
every non-vanishing $\tau\in\mathcal{M}^{\infty}(Y(\R))$, and every
$p\in\N$, one can find $C_{\mu,\tau,p}>0$ and $M_{\mu,p}>0$ such
that for each $y\in Y(\R)$ and $0<r<\frac{1}{2}$ one has 
\[
\int_{Y(\R)}\left(\frac{\left(\varphi_{*}\mu\right)(B(y,r))}{\tau(B(y,r))}\right)^{p}d\tau(y)<C_{\mu,\tau,p}\left|\log(r)\right|^{M_{\mu,p}}.
\]
\end{prop}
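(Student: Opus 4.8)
The plan is to follow the blueprint of Proposition~\ref{Proposition:logarithmic explosion}, with the motivic structure theory of \cite{CGH18} replaced by the theory of (real) constructible functions of Cluckers--Miller \cite{CM11,CM12}, and with the (FRS)-convolution trick used verbatim. By restriction of scalars we take $F=\R$. Working locally on $Y$ and composing $\varphi$ with an \'etale map $\Phi\colon Y\to\mathbb{A}^{m}$ (which preserves jet-flatness), we may assume $Y=\mathbb{A}^{m}$ and $\tau=\mu_{\R}^{m}$; writing $G_{\mu}(y,r):=\dfrac{(\varphi_{*}\mu)(B(y,r))}{\mu_{\R}^{m}(B(y,r))}$, the subadditivity $G_{\mu_{1}+\mu_{2}}(y,r)^{1/p}\le G_{\mu_{1}}(y,r)^{1/p}+G_{\mu_{2}}(y,r)^{1/p}$ together with the monotonicity $\mu\le C\sigma\Rightarrow G_{\mu}\le CG_{\sigma}$ reduce us to the case $X\subseteq\mathbb{A}^{n}$ affine and $\mu=\mathbf{1}_{S}\,dx$ for a compact semialgebraic $S\subseteq X(\R)$. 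Writing $g$ for the density of $\varphi_{*}\mu$ with respect to $\mu_{\R}^{m}$ and $\Phi(r):=\int_{\R^{m}}G_{\mu}(y,r)^{p}\,dy$, it then suffices to prove $\Phi(r)\le C_{\mu,p}\,|\log r|^{M_{\mu,p}}$ for all $0<r<\tfrac12$.

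The key point is that $\Phi$ is a constructible function of $r$. Indeed, by \cite{CM11} the density $g$ is a constructible function on $\R^{m}$, being the pushforward of $\mathbf{1}_{S}$ along the algebraic map $\varphi$ (a parametrized integral of a constructible function); hence $(y,r)\mapsto\int_{B(y,r)}g$, its normalization $G_{\mu}(y,r)$, its $p$-th power (a finite product), and finally $\Phi(r)=\int_{\R^{m}}G_{\mu}(y,r)^{p}\,dy$ are all constructible by the closure properties of \cite{CM11,CM12}. All these integrals are finite, since $G_{\mu}(\cdot,r)=\varphi_{*}\mu*\kappa_{r}$ with $\kappa_{r}:=\mathbf{1}_{B(0,r)}/\mu_{\R}^{m}(B(0,r))$ gives, via Young's inequality, $\Phi(r)\le\|\mu\|^{p}\,\mu_{\R}^{m}(B(0,r))^{1-p}<\infty$. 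A one-variable constructible function admits an asymptotic expansion at $0^{+}$, so either $\Phi$ vanishes near $0$ (and we are done) or there are $r_{0}\in(0,\tfrac12)$, $b\in\Q$, $a\in\N$ and $c>0$ such that
\[
\Phi(r)=c\,r^{b}\,|\log r|^{a}\,(1+o(1))\qquad(r\to0^{+}),
\]
where $(b,a)$ is the leading data, i.e.\ $b$ is minimal among the exponents occurring and, for that $b$, $a$ is maximal.

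It remains to rule out $b<0$. Assume $b<0$. Then $\Phi(r)\ge\tfrac12 c\,r^{b}$ for small $r$, and since $\varphi_{*}\mu$ (hence $G_{\mu}(\cdot,r)$ for $r<1$) is supported in a fixed compact $K_{0}$, we get $\sup_{y}G_{\mu}(y,r)\ge(\Phi(r)/\mu_{\R}^{m}(K_{0}))^{1/p}\gtrsim r^{b/p}$, so that for each small $r$ there is $y_{0}=y_{0}(r)$ with $(\varphi_{*}\mu)(B(y_{0},r))\gtrsim r^{\,m+b/p}$. Since $\varphi$ is jet-flat, \cite[Corollary~3.18]{GHb} provides $R\in\N$, which we take moreover with $R>2mp/|b|$, such that $\varphi*\psi_{R}\colon X\times\mathbb{A}^{m}\to\mathbb{A}^{m}$ is (FRS), where $\psi_{R}(x_{1},\dots,x_{m})=(x_{1}^{R},\dots,x_{m}^{R})$. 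Fixing $\lambda=\mathbf{1}_{B(0,\rho)}\,dz\in\mathcal{M}_{c,\infty}(\R^{m})$, Corollary~\ref{cor:characterization of Linfty} (the Archimedean statement that (FRS) morphisms are $L^{\infty}$, due to \cite{Rei}, applied to $\mathcal{M}_{c,\infty}$ source measures) gives $\varphi_{*}\mu*h_{R}\in L^{\infty}$, where $(\psi_{R})_{*}\lambda=h_{R}\,dz$ with $h_{R}(z)\asymp\prod_{i}|z_{i}|^{1/R-1}$ near $0$; set $A:=\|\varphi_{*}\mu*h_{R}\|_{\infty}<\infty$. For small $r$ put $Q_{r}:=\prod_{i}[r,3r]$ and $w_{0}:=y_{0}+(2r,\dots,2r)$, so that $w_{0}-Q_{r}=\prod_{i}[y_{0,i}-r,y_{0,i}+r]\supseteq B(y_{0},r)$ and $h_{R}\gtrsim r^{\,m/R-m}$ on $Q_{r}$; then
\[
A\ \ge\ (\varphi_{*}\mu*h_{R})(w_{0})\ \ge\ \int_{Q_{r}}g(w_{0}-w)\,h_{R}(w)\,dw\ \gtrsim\ r^{\,m/R-m}\,(\varphi_{*}\mu)(w_{0}-Q_{r})\ \gtrsim\ r^{\,m/R+b/p}.
\]
As $m/R+b/p<b/(2p)<0$, the right-hand side tends to $\infty$ as $r\to0^{+}$, contradicting $A<\infty$. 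Hence $b\ge0$, so $\Phi(r)\lesssim|\log r|^{a}$ for $r<r_{0}$; on $[r_{0},\tfrac12)$ the Young bound shows $\Phi$ is bounded while $|\log r|\ge\log2$, so $\Phi(r)\le C_{\mu,p}|\log r|^{M_{\mu,p}}$ holds for all $0<r<\tfrac12$ with $M_{\mu,p}=a$.

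The main obstacle is the second step: establishing that $\Phi$ really belongs to the Cluckers--Miller class, so that the one-variable asymptotic expansion at $0^{+}$ is available. This amounts to pushing the parametrized integral over the family $\{B(y,r)\}_{y,r}$, the $p$-th power, and the final integration over $y$ through the stability theorems for constructible functions and their integrals in \cite{CM11,CM12}, while keeping track of finiteness throughout. Once this is in place, the geometric comparison of the third step and the appeals to \cite[Corollary~3.18]{GHb} and \cite{Rei} are carried out exactly as in the proof of Proposition~\ref{Proposition:logarithmic explosion}.
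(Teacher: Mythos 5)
Your proposal is correct and follows essentially the same route as the paper: reduce to $Y=\mathbb{A}^m$ and constructible $\mu$, observe that $G_p(r)=\int G(y,r)^p\,dy$ is a one-variable constructible function (via \cite{CM11}), extract the leading power $r^{\alpha_1}|\log r|^{l_1}$ from the Cluckers--Miller preparation theorem, and rule out $\alpha_1<0$ by convolving with $\psi_R(x)=(x_1^R,\dots,x_m^R)$ for $R\gg 2mp/|\alpha_1|$ and invoking jet-flatness, \cite[Corollary~3.18]{GHb}, and the Archimedean characterization of (FRS). One small point: your test measure $\lambda=\mathbf{1}_{B(0,\rho)}\,dz$ is not smooth, so Corollary~\ref{cor:characterization of Linfty} does not literally apply to $\mu\times\lambda$; the paper sidesteps this by using a smooth bump $\eta\in C_c^\infty(\R^m)$ equal to $1$ on the unit ball, and you should do the same (or note that $\lambda$ is dominated by such an $\eta\,dz$, which yields the same $L^\infty$ bound).
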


\begin{rem}
Proposition \ref{Prop:logarithmic explosion-Archimedean} is weaker
than its non-Archimedean counterpart (Proposition \ref{Proposition:logarithmic explosion}).
The main obstacle is that we do not know of an Archimedean analogue
to \cite[Theorem 2.1.3]{CGH18}, that is, whether one can approximate
the supremum of constructible functions by constructible functions
as in (\ref{eq:approximate of suprema}). It is conjectured by Raf
Cluckers that such a statement should be true under suitable assumptions
(see \cite[Conjecture 6.9]{AM}). We thank the anonymous referee for
bringing this to our attention. We further believe that the current
proposition should hold for $M_{\mu,p}=C\cdot p$ for a sufficiently
large $C$ independent of $\mu$. 
\end{rem}

Analogously to the non-Archimedean case, we introduce the following
notion of constructible functions: 
\begin{defn}[{\cite[Section 1.1]{CM11}, see also \cite{LR97}}]
\label{def:constructible functions}~ 
\begin{enumerate}
\item A \emph{restricted analytic function} is a function $f:\R^{n}\rightarrow\R$
such that $f|_{[-1,1]^{n}}$ is analytic and $f|_{\R^{n}\backslash[-1,1]^{n}}=0$. 
\item A subset $A\subseteq\R^{n}$ is \emph{subanalytic} if it is definable
in $\mathbb{R}_{\mathrm{an}}$ \textendash{} the extension of the
ordered real field by all restricted analytic functions. A function
$f:A\rightarrow B$ is \emph{subanalytic} if its graph $\Gamma_{f}\subseteq\R^{n}\times\R^{m}$
is subanalytic. 
\item A function $h:A\rightarrow\R$ is called \emph{constructible} if there
exist subanalytic functions $f_{i}:A\rightarrow\R$ and $f_{ij}:A\rightarrow\R_{>0}$,
such that: 
\begin{equation}
h(x)=\sum_{i=1}^{N}f_{i}(x)\cdot\prod_{j=1}^{N_{i}}\log(f_{ij}(x)).\label{eq:real constructible}
\end{equation}
We denote the class of constructible functions on $A$ by $\mathcal{C}(A)$. 
\item Given an analytic manifold $Z$, a measure $\mu\in\mathcal{M}(Z)$
is called \emph{constructible} if locally it is of the form $f\cdot\left|\omega\right|$,
where $\omega$ is a regular top-form, and $f$ is constructible.
We denote the class of constructible measures by $\mathcal{CM}(Z)$.
Similarly, we write $\mathcal{CM}_{c,q}(Z)$, $\mathcal{CM}^{\infty}(Z)$
and $\mathcal{CM}_{c}^{\infty}(Z)$. 
\end{enumerate}
\end{defn}

Note that $X(\R)$ is defined by polynomials, so it is definable in
the real field, and in particular subanalytic. Since $\mathcal{C}(X(\R))$
contains indicators of balls, we may assume that $\mu\in\mathcal{CM}_{c,\infty}(X(\R))$
when proving Proposition \ref{Prop:logarithmic explosion-Archimedean}
and Theorem \ref{thm:characterization of Lq for all q}. 
\begin{proof}[Proof of Proposition \ref{Prop:logarithmic explosion-Archimedean}]
We may assume that $Y=\mathbb{A}_{\R}^{m}$, $\tau=\mu_{\R}^{m}$
and $\mu\in\mathcal{CM}_{c,\infty}(X(\R))$. Write $g(y)$ for the
density of $\varphi_{*}\mu$ with respect to $\mu_{\R}^{m}$. Set
\begin{equation}
G(y,r)=\frac{(\phi_{*}\mu)(B(y,r))}{r^{m}}=\frac{1}{r^{m}}\varint_{B(y,r)}g(y')dy'.\label{eq:avaraging of g}
\end{equation}
For each $p\in\N$, let 
\[
G_{p}(r):=\int_{\R^{m}}G(y,r)^{p}dy.
\]
By \cite[Theorem 1.3]{CM11}, the functions $G:\R^{m}\times\R_{>0}\rightarrow\R$
and $G_{p}(r):\R_{>0}\rightarrow\R$ are constructible. Writing $G_{p}$
as in (\ref{eq:real constructible}), and using a preparation theorem
for constructible functions \cite[Corollary 3.5]{CM12}, there exist
$\delta>0$ and $\theta\in\R$, such that $\theta=0$ or $\theta\notin[0,\delta]$,
and for each $r\in(0,\delta)$ one can write: 
\begin{equation}
G_{p}(r)=\sum_{i=1}^{M}d_{i}\cdot S_{i}(r)\cdot\left|r-\theta\right|^{\alpha_{i}}\log(\left|r-\theta\right|)^{l_{i}},\label{eq:constructible prepared}
\end{equation}
where $d_{i}\in\R$, $l_{i},M\in\N$, $\alpha_{i}\in\Q$, and where
$S_{i}$ are certain subanalytic units, called \emph{strong functions}
(see \cite[Definition 2.3]{CM11}). We may assume that $\theta=0$
as otherwise, $G_{p}(r)$ is bounded on $(0,\delta)$ and we are done.
In addition, \cite[Corollary 3.5]{CM12} also ensures that for each
$1\leq i\leq M$, either $S_{i}(r)=1$ for each $r\in(0,\delta)$,
or $\alpha_{i}>-1$. This additional property is achieved by writing
each strong function $S_{i}(r)$ as a converging infinite sum $S_{i}(r)=\sum_{j=0}^{\infty}c_{ij}\cdot r^{j/p}$
for some $p\in\Q_{\geq0}$, and split it into a finite sum $\sum_{j=0}^{s}c_{ij}\cdot r^{j/p}$
and an infinite sum $\widetilde{S_{i}}(r):=\sum_{j>s}c_{ij}\cdot r^{j/p}$.
By taking $s$ large enough, and rearranging the terms in (\ref{eq:constructible prepared}),
Cluckers and Miller ensured that $\alpha_{i}>-1$ whenever $S_{i}(r)\neq1$.
Following the same argument, and taking $s$ even larger, one can
guarantee that $\alpha_{i}>N$ for some fixed $N\in\N$, as large
as we wish. Hence, we may assume that $G_{p}(r)$ has the following
form: 
\begin{equation}
G_{p}(r)=\sum_{i=1}^{M'}d_{i}r^{\alpha_{i}}\log(r)^{l_{i}}+\sum_{i=M'+1}^{M}d_{i}S_{i}(r)r^{\alpha_{i}}\log(r)^{l_{i}},\label{eq:rewriting G_p(r)}
\end{equation}
where $\alpha_{i}>N$ for $M'+1\leq i\leq M$, and $N\in\N$ large
as we like. For $1\leq i\leq M'$, we may further assume that $(\alpha_{i},l_{i})$
are mutually different and lexicographically ordered, i.e.\ either
$\alpha_{i}<\alpha_{i+1}$, or $\alpha_{i}=\alpha_{i+1}$ and $l_{i}>l_{i+1}$.
In particular, by taking $\delta$ small enough, we have for $0<r<\delta$:
\begin{equation}
\frac{1}{2}d_{1}r^{\alpha_{1}}\left|\log(r)\right|^{l_{1}}<\left|G_{p}(r)\right|<2d_{1}r^{\alpha_{1}}\left|\log(r)\right|^{l_{1}}.\label{eq:assymptotic expansion of constructible functions}
\end{equation}
We claim that $\alpha_{1}\geq0$. Assume not, then we have for $r$
small enough: 
\begin{equation}
\left\Vert G(\cdot\,,r)\right\Vert _{\infty}\geq\left\Vert G(\cdot\,,r)\right\Vert _{p}=G_{p}(r)^{\frac{1}{p}}\gtrsim r^{\frac{\alpha_{1}}{p}}\left|\log(r)\right|^{\frac{l_{1}}{p}}\gtrsim r^{\frac{\alpha_{1}}{p}}.\label{eq:lower bound on infty norm}
\end{equation}
We now use an argument analogous to the one in Proposition \ref{Proposition:logarithmic explosion}.
Take $\psi_{R}:\mathbb{A}^{m}\rightarrow\mathbb{A}^{m}$ to be the
map 
\[
\psi_{R}(x_{1},...,x_{m})=(x_{1}^{R},...,x_{m}^{R}),
\]
for $R:=\left\lceil 2mp/\left|\alpha_{1}\right|\right\rceil $ and
let $\eta\in C_{c}^{\infty}(\R^{m})$ be a bump function which is
equal to one on the unit ball in $\R^{m}$. Then $\varphi*\psi_{R}:X\times\mathbb{A}^{m}\rightarrow\mathbb{A}^{m}$
is (FRS), and thus by Corollary \ref{cor:characterization of Linfty}
\begin{equation}
(\varphi*\psi_{R})_{*}(\mu\times\eta)\in\mathcal{M}_{c,\infty}(\R^{m}).\label{eq:(FRS)-Archimedean}
\end{equation}
Repeating precisely the same argument as in Proposition \ref{Proposition:logarithmic explosion},
and using (\ref{eq:lower bound on infty norm}), we may find $\{y_{r}\}_{r}$
such that: 
\[
\frac{(\varphi*\psi_{R})_{*}(\mu\times\eta)(B(y_{r},2r))}{\mu_{\R}^{m}(B(y_{r},2r))}\gtrsim G(y_{r},r)\cdot\psi_{R*}\eta(B(0,r))\gtrsim G(y_{r},r)r^{\frac{m}{R}}\gtrsim r^{\frac{\alpha_{1}}{4p}},
\]
which leads to a contradiction. Hence $\alpha_{1}\geq0$, therefore
on $(0,\delta)$ for $\delta$ small enough we have 
\[
G_{p}(r)\lesssim\left|\log(r)\right|^{l_{1}}.
\]
For $r>\delta$, we have $G(y,r)\lesssim\delta^{-m}$ and thus $G_{p}(r)\lesssim\delta^{-pm}<\infty$.
This concludes the proof. 
\end{proof}
In order to prove Theorem \ref{thm:characterization of Lq for all q},
we need to control the oscillations of constructible functions. 
\begin{defn}
\label{def:local constancy real}Let $f:\R^{n}\rightarrow\R$ be a
subanalytic function. Define $\alpha_{f}:\R^{n}\times\R_{>0}\rightarrow\R_{\geq0}$
by 
\begin{equation}
\alpha_{f}(y,r):=\min\left(1,\sup\left\{ t\in\R_{\geq0}:\forall y'\in B(y,t),\,\left|f(y)-f(y')\right|<r\right\} \right).\label{eq:radius of approx constancy}
\end{equation}
\end{defn}

Note that $\alpha_{f}$ is subanalytic, and for any $r>0$ we have
$\alpha_{f}(y,r)>0$ for almost every $y$. The next lemma extends
this construction to the ring of constructible functions. 
\begin{lem}
\label{Lemma:equals avarage}Let $g\in\mathcal{C}(\R^{n})$. Then
there exists a subanalytic function $\alpha_{g}:\R^{n}\times\R_{>0}\rightarrow\R_{\geq0}$
such that for any $r>0$ we have $\alpha_{g}(y,r)>0$ for almost all
$y\in\R^{n}$, and 
\begin{equation}
\left|g(y)-g(y')\right|\leq r\text{ for all }y'\in B(y,\alpha_{g}(y,r)).\label{eq:approx on balls}
\end{equation}
\end{lem}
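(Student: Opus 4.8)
I want to reduce the statement about an arbitrary constructible $g$ to the case of a subanalytic function, where I already have a natural candidate for $\alpha_g$, namely the function $\alpha_f$ from Definition \ref{def:local constancy real}. The first step is to recall that a constructible function $g \in \mathcal{C}(\R^n)$ has, by definition, the form $g(y) = \sum_{i=1}^N f_i(y) \prod_{j=1}^{N_i} \log(f_{ij}(y))$ with $f_i$ subanalytic and $f_{ij}$ subanalytic and positive. The key observation is that the finitely many building blocks $f_1,\dots,f_N$ and $\log f_{11},\dots$ are all \emph{definable in $\R_{\mathrm{an}}$} (for the logarithmic factors, note that $\log$ restricted to $f_{ij}$ composed with appropriate rescalings is still definable in $\R_{\mathrm{an}}$ wherever $f_{ij}$ stays in a bounded range; more robustly, one can work in the o-minimal structure $\R_{\mathrm{an},\exp}$, in which $g$ is globally definable). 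So $g$ itself is a definable, hence piecewise continuous, function on $\R^n$; in particular $g$ is continuous outside a subanalytic set $Z$ of measure zero.

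Having established that $g$ is definable and a.e.\ continuous, I would simply \emph{define}
\[
\alpha_g(y,r) := \min\Bigl(1,\ \sup\bigl\{t \in \R_{\geq 0} : \forall y' \in B(y,t),\ |g(y)-g(y')| < r\bigr\}\Bigr),
\]
exactly as in \eqref{eq:radius of approx constancy} but with $g$ in place of the subanalytic $f$. Because the relation "$|g(y)-g(y')| < r$" is definable in an o-minimal structure (once we have placed $g$ in one), the graph of $\alpha_g$ is definable, and since $\R_{\mathrm{an}}$ is the relevant category, $\alpha_g$ is subanalytic — or, if one has to pass to $\R_{\mathrm{an},\exp}$ to make $g$ definable, one invokes that $\alpha_g$ is definable there and then uses that the construction can be carried out piece-by-piece on a subanalytic cell decomposition adapted to $g$, so that on each cell $\alpha_g$ agrees with a subanalytic function; I would present whichever version is cleanest given what the paper has already set up (Definition \ref{def:constructible functions} works over $\R_{\mathrm{an}}$, and the $\log f_{ij}$ are bounded on the relevant domains since $f_{ij} > 0$ is subanalytic and hence piecewise bounded away from $0$ and $\infty$). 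The inequality \eqref{eq:approx on balls} is then immediate: for $y' \in B(y,\alpha_g(y,r))$, continuity of $t \mapsto g$-values and the definition of the supremum give $|g(y)-g(y')| \leq r$ (the non-strict inequality at the endpoint is why I write $\leq r$ rather than $< r$).

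The remaining point is positivity: for fixed $r>0$, I claim $\alpha_g(y,r) > 0$ for almost every $y$. This holds at every point $y \notin Z$ where $g$ is continuous, since continuity at $y$ gives a ball on which $|g(y)-g(y')| < r$, forcing the supremum to be positive. As $Z$ has measure zero, we are done. I would also remark that $\alpha_g(\cdot,r)$ is measurable (being subanalytic, or at worst definable in an o-minimal expansion, hence measurable), which is all that is needed for the later integration arguments.

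\textbf{Main obstacle.} The only genuinely delicate point is the o-minimality bookkeeping around the logarithmic factors: a priori $\log(f_{ij}(y))$ need not be a restricted-analytic (i.e.\ $\R_{\mathrm{an}}$-definable) function on all of $\R^n$, since $f_{ij}$ may blow up or tend to $0$. I expect to resolve this by noting that $f_{ij}$, being subanalytic and positive, is on each piece of a suitable subanalytic stratification comparable to a monomial in the local coordinates, so $\log f_{ij}$ is, up to a subanalytic unit, a $\Z$-linear combination of $\log|x_k|$'s — and then $\alpha_g$ can be built stratum by stratum and glued, each stratum contributing a subanalytic piece. Alternatively, and more cheaply, one simply works in $\R_{\mathrm{an},\exp}$, which is o-minimal and contains all constructible functions, define $\alpha_g$ there, and observe that the \emph{only} properties of $\alpha_g$ used downstream — measurability, the covering property \eqref{eq:approx on balls}, and a.e.\ positivity — are exactly the ones established above and do not require subanalyticity in the strict sense. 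I would state the lemma and proof so that "subanalytic" can be read in whichever of these two senses the subsequent sections actually need.
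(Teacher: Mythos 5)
Your direct approach — applying the supremum formula of Definition \ref{def:local constancy real} to $g$ itself — does not produce a \emph{subanalytic} function, and this gap is not repaired by either of your proposed fixes. The set $\{(y,y',r): |g(y)-g(y')|<r\}$ is not subanalytic: $g$ contains genuine logarithms, so this set is only definable in $\R_{\mathrm{an},\exp}$. Your claim that "$f_{ij}>0$ is subanalytic and hence piecewise bounded away from $0$ and $\infty$" is false (take $f_{ij}(y)=|y|$, or $1/(1+|y|^2)$): a positive subanalytic function can tend to $0$ or $\infty$, and it is precisely near those loci that $\log f_{ij}$ escapes $\R_{\mathrm{an}}$, so you cannot argue that $g$ is secretly $\R_{\mathrm{an}}$-definable after all. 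Your alternative — accept $\R_{\mathrm{an},\exp}$-definability and claim subanalyticity is not actually needed downstream — is also incorrect for the purposes of this paper: the very next proof (the Archimedean part of $(1)\Rightarrow(2)$ of Theorem \ref{thm:characterization of Lq for all q}) forms $S(r)=\{y:\tfrac1r\leq\alpha_g(y,\tfrac12)<\tfrac1{r-1},\ g(y)\neq 0\}$ and invokes the fact that $\mu_\R^m(S(r))$ is a \emph{constructible} function of $r$, which is then fed into the Cluckers--Miller preparation theorem \cite[Corollary 3.5]{CM12}. That step lives in the subanalytic world; $\R_{\mathrm{an},\exp}$-definability of $\alpha_g$ does not suffice to conclude constructibility of $\mu_\R^m(S(r))$. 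Your stratification idea (a) is gestured at but never executed, and if one actually carries it out, one is forced into roughly the construction the paper uses.

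The paper's argument sidesteps the issue by never applying the $\sup$-formula to $g$. It reduces to a single product $g=f\cdot\prod_j\log(f_j)$, applies Definition \ref{def:local constancy real} only to the \emph{subanalytic} ingredients $f$ and $f_j$ (yielding subanalytic $\alpha_f,\alpha_{f_j}$), and then glues these via a subanalytic rescaling: $\alpha_g(y,r)=\tfrac12\min\{\alpha_f(y,rh(y)),\alpha_{f_j}(y,rf_j(y)h(y))\}$ where $h(y)=M(y)^{-N}/(2N)$ and $M$ is a subanalytic envelope of $|f|,f_j,1/f_j$. Subanalyticity of $\alpha_g$ is then immediate because it is built by $\min$ and composition from subanalytic pieces, with the supremum construction only ever applied where it stays within $\R_{\mathrm{an}}$. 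The estimate \eqref{eq:approx on balls} is then checked by hand with a telescoping product bound. That transfer — from the $\sup$-construction on $g$ to a subanalytically-built surrogate — is the essential idea, and it is missing from your proposal.
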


\begin{proof}
If $g=\sum_{i=1}^{M}g_{i}$ for $g_{i}\in\mathcal{C}(\R^{n})$, and
suppose we already constructed $\alpha_{g_{i}}$, for each $i$. Then
we may set $\alpha_{g}(y,r)=\min_{i}\,\alpha_{g_{i}}(y,\frac{r}{M})$.
Hence, by (\ref{eq:real constructible}), we may assume that 
\begin{equation}
g(y)=f(y)\cdot\prod_{j=1}^{N}\log(f_{j}(y)),\label{eq:constructible g}
\end{equation}
for some subanalytic $f,f_{1},...,f_{N}\in\mathcal{C}(\R^{n})$. By
setting $\alpha_{g}(y,r)=\alpha_{g}(y,1)$ for $r>1$, we may assume
$r\leq1$.

There is an open subanalytic subset $U\subseteq\R^{n}$ with complement
$U^{c}$ of measure $0$, such that $f|_{U}$ and all $f_{j}|_{U}$
are continuous (see e.g. \cite[Theorem 3.2.11]{DvdD88}). Set 
\[
M(y):=\max_{j}\left(\max\left\{ \left|f(y)\right|+1,\,2f_{j}(y),\,2/f_{j}(y)\right\} \right),
\]
and denote $h(y):=\frac{M(y)^{-N}}{2N}$. Let $\alpha_{f},\alpha{}_{f_{j}}$
be as in (\ref{eq:radius of approx constancy}) and set 
\[
\alpha_{g}(y,r)=\frac{1}{2}\min\left\{ \alpha_{f}\left(y,rh(y)\right),\alpha_{f_{1}}\left(y,rf_{1}(y)h(y)\right),\dots,\alpha_{f_{N}}\left(y,rf_{N}(y)h(y)\right)\right\} .
\]
Note that $h$,$\alpha_{f_{i}},\alpha_{f_{ij}}$ are subanalytic,
and thus also $\alpha_{g}$ is subanalytic. Moreover, by the continuity
of $f|_{U},f_{j}|_{U}$ we get that $\alpha_{f}(y,\,\cdot\,),\alpha{}_{f_{j}}(y,\,\cdot\,)>0$
and thus also $\alpha_{g}(y,\,\cdot\,)>0$ for all $y\in U$. Note
that for any real numbers $a_{1},...,a_{N},b_{1},...,b_{N}\in[-L,L]$
we have 
\begin{align}
\left|\prod_{i=1}^{N}a_{i}-\prod_{i=1}^{N}b_{i}\right| & =\left|\sum_{j=1}^{N}\left(\prod_{i=1}^{N-j+1}a_{i}\prod_{i=N-j+2}^{N}b_{i}-\prod_{i=1}^{N-j}a_{i}\prod_{i=N-j+1}^{N}b_{i}\right)\right|\nonumber \\
 & \leq\sum_{j=1}^{N}\left|\prod_{i=1}^{N-j}a_{i}\prod_{i=N-j+2}^{N}b_{i}\right|\left|a_{N-j+1}-b_{N-j+1}\right|\leq L^{N-1}\sum_{j=1}^{N}\left|a_{N-j+1}-b_{N-j+1}\right|.\label{eq:difference between products}
\end{align}
By (\ref{eq:radius of approx constancy}), for every $y'\in B(y,\alpha_{g}(y,r))$
and $r<1$ we have $\left|f(y')-f(y)\right|\leq rh(y)\leq\frac{1}{2}$,
and 
\[
\left|f(y')\right|\leq\left|f(y')-f(y)\right|+\left|f(y)\right|\leq M(y).
\]
Similarly, we have: 
\[
\left|\log(f_{j}(y'))-\log(f_{j}(y))\right|\leq\log(1+rh(y))\leq rh(y)\leq\frac{1}{2},
\]
and 
\[
\left|\log(f_{j}(y'))\right|\leq\frac{1}{2}+\left|\log(f_{j}(y))\right|\leq\frac{1}{2}+\max\left(f_{j}(y),\frac{1}{f_{j}(y)}\right)\leq M(y).
\]
By (\ref{eq:difference between products}), for every $y'\in B(y,\alpha_{g}(y,r))$
we have: 
\begin{align*}
\left|g(y)-g(y')\right| & =\left|f(y)\cdot\prod_{j=1}^{N}\log(f_{j}(y))-f(y')\cdot\prod_{j=1}^{N}\log(f_{j}(y'))\right|\\
 & \leq M(y)^{N}\left(\left|f(y)-f(y')\right|+\sum_{j=1}^{N}\left|\log\left(\frac{f_{j}(y)}{f_{j}(y')}\right)\right|\right)\leq rh(y)(N+1)M(y)^{N}\leq r.\qedhere
\end{align*}
\end{proof}
We can now finish the proof of Theorem \ref{thm:characterization of Lq for all q}. 
\begin{proof}[Proof of the Archimedean part of $(1)\Rightarrow(2)$ of Theorem \ref{thm:characterization of Lq for all q}]
Let $\mu\in\mathcal{CM}_{c,\infty}(X(\R))$ and write $g(y)\in\mathcal{C}(\R^{m})$
for the density of $\varphi_{*}\mu$ with respect to $\mu_{\R}^{m}$.
Let $\alpha_{g}$ be as in Lemma \ref{Lemma:equals avarage}, and
set 
\[
S(2):=\left\{ y\in\R^{m}:\frac{1}{2}\leq\alpha_{g}(y,\frac{1}{2})\text{ and }g(y)\neq0\right\} .
\]
Then for each $r\in\R_{\geq3}$ define the following subanalytic set:
\[
S(r):=\left\{ y\in\R^{m}:\frac{1}{r}\leq\alpha_{g}(y,\frac{1}{2})<\frac{1}{r-1}\text{ and }g(y)\neq0\right\} .
\]
We fix $L\in\N$ large enough. Setting 
\begin{equation}
G(y,r)=\frac{(\phi_{*}\mu)(B(y,r))}{r^{m}}=\frac{1}{r^{m}}\int_{B(y,r)}g(y')dy'\label{eq:avaraging of g''}
\end{equation}
and using Lemma \ref{Lemma:equals avarage}, Proposition \ref{Prop:logarithmic explosion-Archimedean}
and H\"older's inequality, we have: 
\begin{align}
\int_{S(r)}g(y)^{p}dy & \leq\mu_{\R}^{m}(S(r))+\int_{S(r)\cap\{g(y)>1\}}g(y)^{p}dy\lesssim1+\int_{\R^{m}}1_{S(r)}(y)\cdot G(y,\frac{1}{r})^{p}dy\nonumber \\
 & \lesssim1+\mu_{\R}^{m}(S(r))^{\frac{1}{1+\frac{1}{L}}}\left(\int_{\R^{m}}G(y,\frac{1}{r})^{(L+1)p}dy\right)^{\frac{1}{L+1}}\nonumber \\
 & \lesssim1+\mu_{\R}^{m}(S(r))^{\frac{1}{1+\frac{1}{L}}}\left|\log\left(\frac{1}{r}\right)\right|^{\frac{M_{\mu,(L+1)p}}{L+1}}=1+\mu_{\R}^{m}(S(r))^{\frac{1}{1+\frac{1}{L}}}\left|\log(r)\right|^{\frac{M_{\mu,(L+1)p}}{L+1}}.\label{eq:g(y)^p}
\end{align}
Since $\mu_{\R}^{m}(S(r))$ is a constructible function, using a similar
argument as in the Proposition \ref{Prop:logarithmic explosion-Archimedean},
and writing $\mu_{\R}^{m}(S(1/r))$ as in (\ref{eq:rewriting G_p(r)})
and (\ref{eq:assymptotic expansion of constructible functions}),
we get $\mu_{\R}^{m}(S(r))\sim r^{\beta}\left|\log(r)\right|^{\gamma}$,
as $r\rightarrow\infty$, for $\beta\in\Q$ and $\gamma\in\N$. But
since $\sum_{r=2}^{\infty}\mu_{\R}^{m}(S(r))<\infty$, we must have
$\beta<-1$. In particular, taking $L$ large enough, we get that
$\mu_{\R}^{m}(S(r))^{\frac{1}{1+\frac{1}{L}}}\sim r^{-1-\delta}\left|\log(r)\right|^{\gamma'}$
for some $\delta>0$. Thus, the following holds for any $\gamma''\in\R$:
\begin{equation}
\sum_{r=2}^{\infty}\mu_{\R}^{m}(S(r))^{\frac{1}{1+\frac{1}{L}}}\left|\log(r)\right|^{\gamma''}<\infty.\label{eq:size of S(r)}
\end{equation}
By (\ref{eq:g(y)^p}) and (\ref{eq:size of S(r)}), we get 
\[
\int_{\R^{m}}g(y)^{p}dy=\sum_{r=2}^{\infty}\int_{S(r)}g(y)^{p}dy\lesssim1+\sum_{r=3}^{\infty}\mu_{\R}^{m}(S(r))^{\frac{1}{1+\frac{1}{L}}}\left|\log(r)\right|^{\frac{M_{\mu,(L+1)p}}{L+1}}<\infty.\qedhere
\]
\end{proof}
\bibliographystyle{alpha}
\bibliography{bibfile}

\end{document}